\documentclass[a4paper,reqno,11pt,oneside]{amsart}
\usepackage{amsmath,geometry}
\usepackage{graphicx}
\usepackage{mathrsfs}
\usepackage{amssymb,amsmath, amsfonts, amsthm}
\usepackage{latexsym}
\usepackage{color} 
\usepackage[dvips]{epsfig}
\usepackage[colorlinks]{hyperref}
\newtheorem{theorem}{Theorem}[section]
\newtheorem{lemma}[theorem]{Lemma}
\newtheorem{proposition}[theorem]{Proposition}
\newtheorem{corollary}[theorem]{Corollary}

\newtheorem{remark}[theorem]{Remark}

\numberwithin{equation}{section}

\renewcommand{\(}{\left(}
\renewcommand{\)}{\right)}
\newcommand{\R}{\mathbb R}
\newcommand{\dys}{\displaystyle}

\DeclareMathOperator{\diverg}{div}
\newcommand{\eps}{\varepsilon}

 \title
{\sc Normalized concentrating solutions to  nonlinear elliptic problems}

 \begin{document}

\author{Benedetta Pellacci}
\address[B. Pellacci]{Dipartimento di Matematica e Fisica,
Universit\`a della Campania  ``Luigi Vanvitelli'',  via A.Lincoln 5, 81100
Caserta, Italy.}
\email{benedetta.pellacci@unicampania.it}
\author{Angela Pistoia}
\address[A. Pistoia]{Dipartimento SBAI, Sapienza Universit\`a di Roma,
via Antonio Scarpa 16, 00161 Roma, Italy }
\email{angela.pistoia@uniroma1.it}
\author{Giusi Vaira}
\address[G. Vaira]{Dipartimento di Matematica e Fisica,
Universit\`a della Campania  ``Luigi Vanvitelli'',  via A.Lincoln 5, 81100
Caserta, Italy.}
\email{giusi.vaira@unicampania.it}
\author{Gianmaria Verzini}
\address[G. Verzini]{Dipartimento di Matematica, Politecnico di Milano, p.za Leonardo da Vinci 32,  20133 Milano, Italy}
\email{gianmaria.verzini@polimi.it}

\thanks{Research  partially
supported by the MIUR-PRIN-2015KB9WPT Grant: ``Variational methods, with applications to
problems in mathematical physics and geometry'',
by  MIUR-PRIN-2017JPCAPN\_003 Grant
``Qualitative and quantitative aspects of nonlinear PDEs''
by
``Gruppo Nazionale per l'Analisi Matematica, la Probabilit\`a e le loro
Applicazioni'' (GNAMPA) of the Istituto Nazionale di Alta Matematica
(INdAM)}

\subjclass[2010]{35J25, 35B25, 35Q91}
\keywords{Nonlinear Schr\"odinger equation; Mean Field Games; singularly perturbed problems; Lyapunov-Schmidt reduction.}

 \begin{abstract}
We prove the existence of solutions $(\lambda, v)\in \R\times H^{1}(\Omega)$ of the elliptic
problem
\begin{equation*}
\begin{cases}
 \dys -\Delta v+(V(x)+\lambda) v =v^{p}\ &\text{ in $ \Omega, $}
 \\
v>0,\qquad \dys \int_\Omega v^2\,dx =\rho.
\end{cases}
\end{equation*}
Any $v$ solving such problem (for some $\lambda$) is called a normalized solution, where the
normalization is settled in $L^2(\Omega)$. Here $\Omega$ is either the whole space $\mathbb R^N$
or a bounded smooth domain of $\mathbb R^N$, in which case we assume $V\equiv0$ and homogeneous
Dirichlet or Neumann boundary conditions. Moreover,
$1<p<\frac{N+2}{N-2}$ if $N\ge 3$ and $p>1$ if $N=1,2$. Normalized solutions appear in different contexts, such as the study of the Nonlinear Schr\"odinger equation, or that of quadratic ergodic Mean Field Games systems. We prove the existence of solutions concentrating at suitable points of $\Omega$ as the prescribed mass $\rho$ is either small (when $p<1+\frac 4N$) or large (when $p>1+\frac 4N$) or it approaches some critical threshold  (when $p=1+\frac 4N$).
\end{abstract}

\maketitle

\section{Introduction}
Let $\Omega$ be a smooth open domain in  $\mathbb R^N$, $V:\Omega\to\R$  and $\rho>0$.
We study the existence of solutions $(\lambda, v)\in \R\times H^{1}(\Omega)$ of the elliptic
problem
\begin{equation}\label{P0}
\begin{cases}
 \dys -\Delta v+(V(x)+\lambda) v =v^{p}\ &\text{ in $ \Omega, $}
 \\
v>0,\qquad \dys \int_\Omega v^2\,dx =\rho,
\end{cases}
\end{equation}
where $p\in\(1, 2^*-1\)$. Here the usual critical Sobolev exponent is $2^* = 2N/(N-2)$ if $N\ge 3$ and $2^*=+\infty$ if $N=1,2$. In particular we will face two different cases: either
$\Omega=\R^{N}$, or $\Omega$ is a bounded smooth domain; in the latter case, we will
assume $V\equiv0$ and associate with \eqref{P0} homogeneous Dirichlet or Neumann boundary
conditions. Any $v$ solving \eqref{P0} (for some $\lambda$) is called a
\emph{normalized solution}, where the normalization is settled in $L^2(\Omega)$.

\subsection{Motivations}

Normalized solutions to semilinear elliptic problems are investigated in different applied models. One main, well-established motivation comes from the study of solitary waves to
time-dependent nonlinear  Schr\"{o}dinger equations (NLSE). For concreteness, let us consider the following NLSE for the time dependent, complex valued wave function $\Phi$:
\begin{equation}\label{eq:NLSEt}
i\partial_t \Phi + \Delta \Phi - V(x)\Phi + |\Phi|^{p-1}\Phi = 0,\qquad x\in\Omega,\ t\in\R.
\end{equation}
In this context, either $\Omega = \R^N$, or $\Omega$ can be a bounded domain, in which case homogeneous Dirichlet boundary conditions are imposed, to approximate an infinite well potential (i.e. $V(x) \equiv +\infty$ in $\R^N\setminus\Omega$). As it is well known \cite{Cazenave2003}, solutions to \eqref{eq:NLSEt} conserve, at least formally, the energy $E(\Phi)$ and the mass $Q(\Phi)$, where
\[
E(\Phi) = \frac12\int_\Omega |\nabla\Phi|^2 + \frac12\int_\Omega V(x)|\Phi|^2 - \frac{1}{p+1}\int_\Omega |\Phi|^{p+1},
\qquad
Q(\Phi) = \int_\Omega |\Phi|^2.
\]
Solitary wave solutions to \eqref{eq:NLSEt} are obtained imposing the \textit{ansatz}  $\Phi(x,t) = e^{i\lambda t} v(x)$, where the real constant $\lambda$ and the real valued function $v$ satisfy
\begin{equation}\label{eq:NLSE}
-\Delta v + (V(x) + \lambda) v = |v|^{p-1}v
\end{equation}
in $\Omega$, with suitable boundary conditions. Now, two  point of view can be adopted.

On the one hand, one can choose a fixed value of $\lambda$, searching for solutions $v$ of \eqref{eq:NLSE}. This can be done using either topological methods, such as fixed point theory or the
Lyapunov-Schmidt reduction, or variational ones, looking for critical points of the associated
action functional $J(v) = E(v) + \lambda Q(v) /2$. This point of view has been widely adopted in
the last decades, the related literature is huge, and we do not even try to summarize it here.

On the other hand, one can consider also $\lambda$ as part of the unknown. In this case it is
quite natural to fix the value $Q(v)$, so that one is led to consider normalized solutions. The
variational framework to treat this problem consist in  searching for critical points of the energy $E$, constrained to the Hilbert manifold $M_\rho = \{v:Q(v)=\rho\}$. In this way,
$\lambda$ plays the role of a Lagrange multiplier. Notice that, in the simplest case $\Omega = \R^N$, $V\equiv0$, the problem
\begin{equation}\label{eq:scalingpb}
 \begin{cases}
 \dys -\Delta v+\lambda  v =v^{p}\ &\text{ in $ \R^N, $}
 \\
v>0,\quad \dys \int_\Omega v^2\,dx =\rho,
\end{cases}
\end{equation}
can be completely solved by scaling, at least when dealing with positive $v$. More precisely, in the subcritical range $1<p<2^*-1$, let us denote with $U$ the unique radial solution (depending on $p$) to
\begin{equation}\label{pblim}
 -\Delta U + U =U^p,\qquad U\in H^1(\mathbb R^N),\ U>0\ \mbox{in}\  \mathbb R^N,
\end{equation}
having mass
\begin{equation}\label{so}
2\sigma_0 = 2\sigma_0(p) :=\int_{\mathbb R^N} U^2(x)\,dx>0.
\end{equation}
It is well know that any positive solution in $H^1(\R^N)$ of $-\Delta v + v = v^p$ is a translated copy of $U$. Therefore we obtain that $(\lambda , v)$ solves  \eqref{eq:scalingpb}
if and only if
\[
\lambda>0, \qquad v(x) = \lambda^{\frac{1}{p-1}} U(\lambda^{\frac12}x),\qquad \rho =
\lambda^{\frac{2}{p-1}-\frac{N}{2}} \cdot 2\sigma_0.
\]
As a consequence, \eqref{eq:scalingpb} is solvable for every $\rho$ whenever $\dfrac{2}{p-1}-\dfrac{N}{2}\neq 0$ (and the solution is unique up to translations). The complementary case
corresponds to the so-called \emph{mass critical} (or  $L^2$-critical) exponent:
\[
p = 1 + \frac{4}{N}
\qquad\implies\qquad
\text{\eqref{eq:scalingpb} is solvable iff } \rho = 2\sigma_0
\]
(with infinitely many solutions, one for every $\lambda>0$). As we will see,
 on a general ground, for the mass critical exponent the existence of normalized solutions becomes strongly
unstable. Incidentally, the criticality of such exponent has repercussions also in other aspects
of \eqref{eq:NLSEt}, related to dynamical issues (orbital stability, blow-up) also in connection
with the exponents appearing in the Gagliardo-Nirenberg inequality, see \cite{Weinstein1983, Cazenave2003}.

When scaling is not allowed, the existence of normalized solutions becomes nontrivial, and many
techniques developed for the case with fixed $\lambda$ can not be directly adapted to this
framework. Also for this reason, the literature concerning normalized solutions is far less broad: after the paper by Jeanjean \cite{MR1430506} in 1997, concerning autonomous equations on
$\R^N$ with non-homogeneous nonlinearities, only recently an increasing number of papers deal with this subject. Different lines of investigation include, for instance, NLS equations and systems on $\R^N$\cite{MR3009665,MR3539467,BellazziniJeanjean,MR3639521,MR3638314,MR3777573,BellazziniGeorgievVisciglia,0951-7715-31-5-2319,MR3895385,2018arXiv181100826S,2019arXiv190102003S}, on bounded domains
\cite{ntvAnPDE,ntvDCDS,MR3689156,MR3918087} or on quantum graphs
\cite{AST1,MR3494248,AST2,MR3758538,pierotti2019local}.

More recently, normalized solutions have been considered also in connection with Mean Field Games (MFG) theory, which has been introduced by seminal papers of Lasry and Lions \cite{jeux1,jeux2,LasryLions} and of Caines, Huang, Malham\'e \cite{HCM}.
Such theory models the behavior of a large number of indistinguishable rational agents, each aiming at minimizing some common cost. In the ergodic case, when the cost is of
long-time-average type, the distribution of the players becomes stationary in time. For our aims, we focus on ergodic MFG with quadratic Hamiltonian and power-type, aggregative interaction.
The reason of this choice is that in this case, contrary to the general one, the MFG system can be reduced to \eqref{P0} by a change of variable. In the setting we want to describe, the state of a typical agent is driven by the controlled stochastic differential equation
\[
d X_t = -a_t dt + \sqrt{2 \nu} \, d B_t,
\]
where $a_t$ is the controlled velocity and $B_t$ is a Brownian motion, with initial state provided by the random variable $X_0$. The player chooses $a_t$ in such a way to minimize the  cost
\[
\mathcal{J}(X_0, a) = \liminf_{T \rightarrow \infty} \frac{1}{T} \int_0^T \mathbb{E}  \left[\frac{|a_t|^2}{2} + V(X_t) - \alpha m^q(X_t)\right] dt,
\]
where $q>0$, $V$ is a given potential and $m(x)$ denotes the (observed) density of the players at $x\in\Omega$.
As time $t\to+\infty$, the
distribution law of $X_t$ converges to a measure having density $\mu=\mu(x)$, independent of $X_0$, and at Nash
equilibria of the game the densities $\mu$ and $m$ coincide. From the PDE viewpoint, such equilibria are described by the following elliptic system, which couples a
Hamilton-Jacobi-Bellman equation for $u$ and a Kolmogorov equation for $m$, which has to satisfy also a normalization in $L^1(\Omega)$:
\begin{equation}\label{MFG}
\begin{cases}
- \nu \Delta u(x) + \frac{1}{2} {|\nabla u(x)|^2} = \lambda + V(x) -\alpha m^q(x) &
\text{in $\Omega$} \\
- \nu \Delta m(x) - \diverg (m(x) \, \nabla u(x)) = 0  & \text{in $\Omega$}  \\
\dys \int_\Omega m dx = 1,\qquad m>0.
\end{cases}
\end{equation}
Here the unknown $\lambda$ gives, up to a change of sign, the average cost, $\nabla u$ provides
an optimal control,  and $m$ is the stationary population density of agents playing with optimal
strategy. As we mentioned, we deal with the aggregative case, i.e. $\alpha>0$: indeed, in such
case, the individual cost $\mathcal{J}$ is decreasing with respect to $m$, and the agents are
attracted to crowded regions \cite{MR3532395,Gomes2018}. If we suppose that $\Omega$ is bounded and that its boundary $\partial \Omega$ acts as a reflecting barrier on the state $X_t$,
then \eqref{MFG} is naturally complemented with Neumann boundary conditions, both for $u$ and $m$
\cite{MR3333058}. Alternatively, one can consider \eqref{MFG} on $\Omega = \R^N$
\cite{MR3864209}. As we mentioned, the specific choice of the quadratic Hamiltonian
$H(p) = |p|^2/2$
allows to use the Hopf-Cole transformation \cite{LasryLions} in order to reduce \eqref{MFG} to a single PDE. Indeed, defining
\begin{equation}\label{eq:hopfcole}
v^2(x) := \alpha^{1/q} m(x) = c e^{-u(x)/\nu},
\end{equation}
for a suitable normalizing constant $c$, then $v$ solves
\[
\begin{cases}
 \dys -2\nu^2\Delta v+(V(x)+\lambda) v =v^{2q+1}\ &\text{ in $ \Omega, $}
 \\
v>0,\qquad \dys \int_\Omega v^2\,dx = \alpha^{1/q},
\end{cases}
\]
which reduces to \eqref{P0} by choosing $\nu = \sqrt2/2$, $p=2q+1$, $\rho = \alpha^{1/q}$.

\subsection{Main results}

A common feature of the papers listed above, both in the NLS and in the MFG case, is that
they use a variational approach: normalized solutions are found either as minimizers
or as saddle points of a suitable energy ($E$ in the NLS case) on the mass constraint.
Up to our knowledge, only few results about normalized solutions exploit non-variational
techniques: in particular, we refer to \cite{MR3660463}, where bifurcation techniques are applied
to a quadratic multi-population MFG system.

In the present paper we propose a first approach to problem \eqref{P0} based on the
Lyapunov-Schmidt reduction. Indeed, setting
\begin{equation}\label{eq:v<->u}
\varepsilon:=\lambda^{-\frac12},\qquad
u:=\varepsilon^{{\frac2{p-1}}}v,
\end{equation}
problem \eqref{P0} turns to be equivalent to
\begin{equation}\label{p}
\begin{cases}
 \dys -\varepsilon^2\Delta u+(\varepsilon^2V(x)+1) u =u^{p}\ &\text{ in $ \Omega, $}
 \\
u>0,\quad \dys \varepsilon^{-\frac{4}{p-1}}\int_\Omega u^2\,dx =\rho.
\end{cases}
\end{equation}
We treat \eqref{p} as a singularly perturbed problem, looking for solutions
$(\varepsilon,u)$, with $\varepsilon$ sufficiently small, via a Lyapunov-Schmidt reduction. By \eqref{eq:v<->u}, these correspond to solutions $(\lambda,v)$ of the original problem \eqref{P0}, with $\lambda$ large. As a matter of fact, this strategy will work for selected ranges of $\rho$, depending on $p$.

As an important advantage of our approach we are able to describe the
asymptotic profile of the solutions we find, in terms  of the solution $U\in H^1(\mathbb R^N)$
of problem \eqref{pblim}. More precisely, we find solutions which are approximated by a suitable scaling of $U$, concentrated at suitable points.

Roughly speaking, we say that a family $v=v_\rho$ of solutions of \eqref{P0}, indexed on $\rho$,
\emph{concentrates} at some point $\xi_0 \in \overline{\Omega}$ as
$\rho\to\rho^*\in[0,+\infty]$ if
\begin{equation}\label{eq:qualit_develop}
v_\rho (x) = \eps_\rho^{-\frac{2}{p-1}} U \(\frac{x-\xi_\rho}{\varepsilon_\rho}\)+R_\rho(x),
\end{equation}
where, as $\rho\to\rho^*$, $\eps_\rho\to 0$, $\xi_\rho \to \xi_0$, and the remainder $R_\rho$ is a lower order term, in some suitable sense.

About the point of concentration $\xi_0$, we deal with three different cases, namely:
\begin{enumerate}
\item\label{it:br} $\Omega$ bounded, $V\equiv 0$, Neumann boundary conditions, in which case $\xi_0\in\partial\Omega$ is a non-degenerate critical point of the mean
curvature of the boundary $\partial\Omega$;
\item\label{it:int} $\Omega$ bounded, $V\equiv 0$, either Dirichlet or Neumann boundary conditions, in which case $\xi_0\in \Omega$ is the maximum point   of the distance function from the $\partial\Omega$;
\item\label{it:sc} $\Omega=\R^N$, in which case $\xi_0\in\R^N$ is a non-degenerate critical point of $V$.
\end{enumerate}
To illustrate the kind of results we obtain, we provide here a qualitative, incomplete statement
concerning each case. Let us start with the boundary concentration case \eqref{it:br} which will be treated in  Section \ref{sub:br}  (see Theorems \ref{main1},
\ref{main1crit}).
 \begin{theorem}\label{thm:introN}
Let us consider \eqref{P0}, with $\Omega$ bounded and $V\equiv 0$,
associated with Neumann boundary conditions. Let  $\xi_0\in \Omega$ be a non-degenerate critical point of
the mean curvature $ H$ of the boundary $\partial\Omega$. There exists $
\rho_0 = \rho_0(p,\Omega)>0$ such that:
\begin{itemize}
\item if $1<p<1+\dfrac{4}{N}$ there exist solutions $v_\rho$ for every $\rho>\rho_0$, concentrating at $\xi_0$ as $\rho\to+\infty$;
\item if $1+\dfrac{4}{N}<p<2^*-1$ there exist solutions $v_\rho$ for every $0<\rho<\rho_0$, concentrating at $\xi_0$ as $\rho\to0$;
\item if $p=1+\dfrac{4}{N}$, $H(\xi_0)\not=0$ and \eqref{sigma1} holds true, there exist solutions $v_\rho$ for every $\sigma_0 - \rho_0<\rho< \sigma_0$ or
$\sigma_0<\rho< \sigma_0 + \rho_0$
depending on the sign of the mean curvature at $\xi_0$; in both cases, $v_\rho$ concentrates at $\xi_0$ as $\rho\to \sigma_0$ ($\sigma_0$ being defined in \eqref{so}).
\end{itemize}
\end{theorem}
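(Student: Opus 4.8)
The plan is to reduce the normalized problem to a finite-dimensional one via a Lyapunov--Schmidt scheme applied to the singularly perturbed equation \eqref{p}, exploiting the classical construction of boundary-spike solutions for the Neumann problem. First I would fix a small parameter $\eps$ and build an approximate solution $W_{\eps,\xi}(x) = \eps^{-\frac{2}{p-1}}U\bigl((x-\xi)/\eps\bigr)$, suitably corrected near $\partial\Omega$ to accommodate the Neumann condition (the standard reflection/projection $PU_{\eps,\xi}$), with $\xi\in\partial\Omega$ in a neighborhood of the non-degenerate critical point $\xi_0$ of the mean curvature $H$. Writing $u = W_{\eps,\xi}+\phi$, the usual argument — linearization around $U$, non-degeneracy of the limiting kernel spanned by $\partial_j U$, and a contraction mapping in the space orthogonal to that kernel — produces, for $\eps$ small and $\xi$ in that neighborhood, a unique small $\phi = \phi_{\eps,\xi}$ solving the equation up to a Lagrange-type remainder lying in the kernel. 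Then I would solve the bifurcation (reduced) equation: the reduced energy has an expansion of the form $\mathcal{E}_\eps(\xi) = c_1 - c_2\,\eps\,H(\xi) + o(\eps)$ (with $C^1$-closeness of the remainder), so that a non-degenerate critical point $\xi_0$ of $H$ persists as a critical point $\xi_\eps\to\xi_0$ of $\mathcal{E}_\eps$, giving a genuine solution $u_\eps$ of \eqref{p} for every small $\eps$.

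The point specific to the \emph{normalized} setting is the mass constraint. Having constructed $u_\eps$ for all small $\eps$, I would compute the associated mass: from \eqref{eq:v<->u} one has $\rho = \rho(\eps) = \eps^{-\frac{4}{p-1}}\int_\Omega u_\eps^2\,dx$, and since $\int_\Omega u_\eps^2\,dx = \int_{\R^N}U^2\,dx\cdot\eps^N(1+o(1)) = 2\sigma_0\,\eps^N(1+o(1))$ (the boundary correction and $\phi_\eps$ contributing only lower-order terms), we get
\begin{equation*}
\rho(\eps) = 2\sigma_0\,\eps^{N-\frac{4}{p-1}}\bigl(1+o(1)\bigr)\quad\text{as }\eps\to0.
\end{equation*}
The exponent $N - \frac{4}{p-1}$ is negative exactly when $p<1+\frac4N$, in which case $\rho(\eps)\to+\infty$ as $\eps\to0$, and positive when $p>1+\frac4N$, in which case $\rho(\eps)\to0$. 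So in either non-critical regime the continuous function $\rho(\cdot)$ sweeps out a half-line: by an intermediate-value argument there is $\rho_0$ such that every $\rho$ beyond $\rho_0$ (in the appropriate direction) is attained as $\rho(\eps)$ for some small $\eps$, and the corresponding $v_\rho := \eps^{\frac{2}{p-1}}u_\eps$ solves \eqref{P0} with $\lambda = \eps^{-2}\to+\infty$; the concentration statement \eqref{eq:qualit_develop} at $\xi_0$ is then read off directly from $u_\eps = PU_{\eps,\xi_\eps}+\phi_\eps$ and $\xi_\eps\to\xi_0$.

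In the mass-critical case $p = 1+\frac4N$ the leading exponent vanishes, so $\rho(\eps) = 2\sigma_0(1+o(1))\to 2\sigma_0$ and the first-order term alone does not let one prescribe $\rho$. Here one must push the expansion one order further: the mass $\rho(\eps)$ will have an expansion like $2\sigma_0 + c\,H(\xi_0)\,\eps + o(\eps)$ for a constant $c\neq0$ whose sign is fixed, under a non-degeneracy hypothesis \eqref{sigma1} ensuring the coefficient does not accidentally vanish. Monotonicity (or at least surjectivity onto a one-sided neighborhood of $2\sigma_0$) of $\eps\mapsto\rho(\eps)$ then yields solutions for $\rho$ in a one-sided punctured neighborhood of $\sigma_0$, the side depending on $\mathrm{sign}\,H(\xi_0)$, all concentrating at $\xi_0$ as $\rho\to\sigma_0$. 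The main obstacle, and the place where real work is needed, is precisely this fine asymptotic analysis: obtaining the reduced energy expansion and, above all, the second-order expansion of the mass with an explicitly signed coefficient requires carefully tracking the boundary corrections, the interaction between $U$ and its reflected copy, and the contribution of $\phi_\eps$ — estimates that must be established in $C^1$ in $\xi$ to make the critical-point persistence and the intermediate-value arguments rigorous. The non-critical cases are comparatively soft once the standard Lyapunov--Schmidt machinery is in place; the critical case is where the hypotheses $H(\xi_0)\neq0$ and \eqref{sigma1} enter and where the argument is genuinely delicate.
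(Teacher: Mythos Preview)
Your strategy is essentially the paper's: use the known boundary-spike solutions of the singularly perturbed Neumann problem, compute their $L^2$-mass as a function of $\eps$, and invert. A few points of comparison and one genuine slip:

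\medskip
\textbf{The factor of two.} For a spike at $\xi_0\in\partial\Omega$ only (roughly) half of $U$ lies inside $\Omega$, so $\int_\Omega u_\eps^2 = \eps^N\bigl(\sigma_0 + o(1)\bigr)$, not $2\sigma_0\eps^N$. This is why the critical threshold in the statement is $\sigma_0$, not $2\sigma_0$; your formula mixes the two. Harmless for the non-critical cases, but it matters for the critical one.

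\medskip
\textbf{Quoting versus redoing.} The paper does not re-run the Lyapunov--Schmidt reduction; it simply quotes the refined expansion from Li and Wei (their Theorem~\ref{li-wei}), which gives
\[
u_\eps(x) = U\!\left(\tfrac{x-\xi_\eps}{\eps}\right) + \eps\,V_{\xi_0}\!\left(\tfrac{x-\xi_\eps}{\eps}\right) + \psi_\eps, \qquad \|\psi_\eps\|_{H^1_\eps} = O\bigl(\eps^{\min\{2,p\}+N/2}\bigr),
\]
with $\xi_\eps\to\xi_0$. Your plan to rebuild this is correct but unnecessary.

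\medskip
\textbf{Inversion of $\rho(\eps)$.} Rather than an intermediate-value argument (which would require continuity of $\eps\mapsto\rho(\eps)$ that is not explicitly established), the paper writes $\eps^{N-4/(p-1)} = \Lambda\rho$ with $\Lambda$ a free parameter near $1/\sigma_0$, expands the mass as $\rho\bigl(\Lambda\sigma_0 + o(1)\bigr)$ uniformly in $\Lambda$, and solves $\Lambda\sigma_0 + o(1) = 1$. In the critical case it sets $\eps = \Lambda|\rho-\sigma_0|$ and solves for $\Lambda$ analogously. This sidesteps any regularity question.

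\medskip
\textbf{The critical case.} You correctly identify that one must expand the mass to next order. The point you leave vague, and which is the actual content of Theorem~\ref{main1crit}, is this: the zeroth-order ansatz $U$ alone leaves a remainder of size $\eps^{1+N/2}$, too large to read off the order-$\eps$ coefficient in the mass. One must include the first correction $\eps V_{\xi_0}$ in the ansatz so that the true remainder is $o(\eps)$ in $L^2$; and then $V_{\xi_0}$ itself contributes to the mass at order $\eps$. The resulting coefficient is $-H(\xi_0)\,\mathfrak n$, where $\mathfrak n$ (defined in \eqref{sigma1}) combines a direct boundary term $\int_{\R^{N-1}}|y'|^2U^2(y',0)\,dy'$ with the cross term $\int_{\R^N_+}UW_1$ coming from $V_{\xi_0}$. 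Neither piece alone gives the answer; the hypothesis \eqref{sigma1} is precisely that their combination does not cancel. Your sketch anticipates a non-vanishing condition but does not locate where the second contribution comes from.
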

Theorem \ref{thm:introN} can be immediately translated to the MFG system \eqref{MFG}. Recalling
\eqref{eq:hopfcole}, in this case the leading parameter is $\alpha$ and the concentration of
the density $m_\alpha$ is intended as
\[
m_\alpha (x) = (\alpha\eps_\alpha^2)^{-\frac{1}{q}} U^2 \(\frac{x-\xi_\alpha}{\varepsilon_\alpha}\)+R_\alpha(x).
\]
\begin{corollary}\label{coro:intro1}
Let us consider the MFG system \eqref{MFG}, with $\nu = \sqrt2/2$, $\Omega$ bounded and $V\equiv 0$, associated with Neumann boundary conditions. Let $\xi_0\in \Omega$ be a non-degenerate critical point of
the mean curvature $ H$ of the boundary $\partial\Omega$. There exists $\alpha_0 = \alpha_0(p,\Omega)>0$ such that:
\begin{itemize}
\item if $0<q<\dfrac{2}{N}$ there exist solutions $m_\alpha$ for every $\alpha>\alpha_0$,
concentrating at $\xi_0$ as $\alpha\to+\infty$;
\item if $\dfrac{2}{N}<q<\dfrac{2^*-2}{2}$ there exist solutions $m_\alpha$ for every $0<\alpha<\alpha_0$, concentrating at $\xi_0$ as $\alpha\to0$;
\item if $q=\dfrac{2}{N}$,  $H(\xi_0)\not=0$ and \eqref{sigma1} holds true, there exist solutions $m_\alpha$ either for every $\sigma_0^q - \alpha_0<\alpha< \sigma_0^q$ or
$\sigma_0^q<\alpha< \sigma_0^q + \alpha_0$, depending on the sign of the mean curvature at
$\xi_0$; in both cases, $m_\alpha$ concentrates at $\xi_0$ as $\alpha\to\sigma_0^q$.
\end{itemize}
\end{corollary}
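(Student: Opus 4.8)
The plan is to deduce Corollary~\ref{coro:intro1} from Theorem~\ref{thm:introN} (more precisely, from its precise versions, Theorems~\ref{main1} and~\ref{main1crit}) through the Hopf--Cole change of variables \eqref{eq:hopfcole}. With $\nu=\sqrt2/2$ one has $2\nu^2=1$, and as recalled right after \eqref{eq:hopfcole} the relations $m=\alpha^{-1/q}v^2$ and $v^2=c\,e^{-u/\nu}$ (the latter recovering $u$ up to an additive constant, immaterial since $u$ enters \eqref{MFG} only through $\nabla u$) set up a correspondence between the Neumann problem \eqref{MFG} with $V\equiv 0$ and problem \eqref{P0} with $V\equiv 0$, Neumann boundary conditions, exponent $p=2q+1$, and prescribed mass $\rho=\alpha^{1/q}$: the constraint $\int_\Omega m\,dx=1$ becomes $\int_\Omega v^2\,dx=\alpha^{1/q}$, while $m>0$ and the solvability of the equation for $u$ follow from $v>0$. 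Thus, for the direction we need, from any family $v_\rho$ of solutions of \eqref{P0} one obtains a family $(u_\alpha,m_\alpha,\lambda_\alpha)$ of solutions of \eqref{MFG} by setting $\rho=\alpha^{1/q}$, $m_\alpha=\alpha^{-1/q}v_{\rho}^{\,2}$, and recovering $u_\alpha$ and the multiplier $\lambda_\alpha=\lambda_\rho$ accordingly.

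It then remains to match the parameter constraints. Since $p=2q+1$, the ranges $1<p<1+\frac4N$, $p=1+\frac4N$ and $1+\frac4N<p<2^*-1$ correspond respectively to $0<q<\frac2N$, $q=\frac2N$ and $\frac2N<q<\frac{2^*-2}{2}$ (using $2^*-1=\frac{N+2}{N-2}$ and $\frac{2^*-2}{2}=\frac2{N-2}$ when $N\ge3$, with the usual conventions for $N\le2$), and the assumptions on $\xi_0$ and the extra hypothesis in the critical case are exactly those of Theorem~\ref{thm:introN}. Since $\alpha\mapsto\rho=\alpha^{1/q}$ is an increasing bijection of $(0,+\infty)$ onto itself, the threshold $\rho_0$ of Theorem~\ref{thm:introN} corresponds to $\alpha_0:=\rho_0^{\,q}$, so that $\rho>\rho_0$ (resp.\ $\rho<\rho_0$) is equivalent to $\alpha>\alpha_0$ (resp.\ $\alpha<\alpha_0$); in the mass-critical case the one-sided interval $(\sigma_0-\rho_0,\sigma_0)$, resp.\ $(\sigma_0,\sigma_0+\rho_0)$, is carried onto a one-sided neighbourhood of $\sigma_0^q$, which contains $(\sigma_0^q-\alpha_0,\sigma_0^q)$, resp.\ $(\sigma_0^q,\sigma_0^q+\alpha_0)$, after shrinking $\alpha_0$ if necessary, and $\rho\to\sigma_0$ becomes $\alpha\to\sigma_0^q$. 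This yields the existence statement in each of the three regimes.

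Finally, for the concentration profile, fix $\rho=\alpha^{1/q}$ and let $v_\rho$ be one of the solutions of \eqref{P0} produced by Theorem~\ref{thm:introN}, so that \eqref{eq:qualit_develop} holds with $\eps_\rho\to0$ and $\xi_\rho\to\xi_0$. Writing $\eps_\alpha:=\eps_\rho$, $\xi_\alpha:=\xi_\rho$ and using $p-1=2q$, hence $\frac{2}{p-1}=\frac1q$, we square \eqref{eq:qualit_develop} and multiply by $\alpha^{-1/q}$ to obtain
\[
m_\alpha(x)=\alpha^{-1/q}\Bigl(\eps_\alpha^{-1/q}U\bigl(\tfrac{x-\xi_\alpha}{\eps_\alpha}\bigr)+R_\rho(x)\Bigr)^{2}=(\alpha\eps_\alpha^{2})^{-1/q}\,U^{2}\bigl(\tfrac{x-\xi_\alpha}{\eps_\alpha}\bigr)+R_\alpha(x),
\]
with $R_\alpha:=\alpha^{-1/q}\bigl(2\eps_\alpha^{-1/q}U(\tfrac{x-\xi_\alpha}{\eps_\alpha})R_\rho+R_\rho^{2}\bigr)$, which is precisely the asserted expansion of $m_\alpha$. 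I expect the only point requiring genuine work to be the verification that $R_\alpha$ is of lower order with respect to the leading term $(\alpha\eps_\alpha^{2})^{-1/q}U^{2}(\cdot)$ in the appropriate norm; this is where one invokes the quantitative smallness of the remainder $R_\rho$ relative to the leading bubble $\eps_\rho^{-2/(p-1)}U((\cdot-\xi_\rho)/\eps_\rho)$ established in the proofs of Theorems~\ref{main1} and~\ref{main1crit}, together with the boundedness and decay of $U$. Everything else is the bookkeeping above.
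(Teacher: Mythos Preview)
Your proposal is correct and follows exactly the approach the paper intends: the paper does not give a separate proof of Corollary~\ref{coro:intro1}, stating only that Theorem~\ref{thm:introN} ``can be immediately translated to the MFG system \eqref{MFG}'' via the Hopf--Cole change \eqref{eq:hopfcole}, with the concentration profile for $m_\alpha$ displayed right before the statement. Your bookkeeping of the parameter correspondence $p=2q+1$, $\rho=\alpha^{1/q}$, $\alpha_0=\rho_0^q$ and the derivation of the expansion for $m_\alpha$ simply spell out this translation in detail.
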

Since \eqref{MFG} with $\alpha>0$ entails an aggregative interaction between the players,
concentrating solutions are somehow expected. In \cite{MR3864209}, concentrating solutions were
obtained for more general, non-quadratic MFG, in the mass subcritical case, by variational
methods. Our results are reminiscent of those obtained in \cite[Thm. 1.1]{MR3532395}.

Let us state our results concerning the interior concentration case \eqref{it:int} which will be treated in  Section \ref{sub:int} (see Theorems \ref{main2},
\ref{main2critico}).
\begin{theorem}\label{thm:intro}
Let us consider \eqref{P0}, with $\Omega$ bounded and $V\equiv 0$, associated with either homogeneous Dirichlet boundary conditions or Neumann ones. Let $\xi_0\in \Omega$ be the maximum point of the distance function from the $\partial\Omega$. There exists $\rho_0 = \rho_0(p,\Omega)>0$ such that:
\begin{itemize}
\item if $1<p<1+\dfrac{4}{N}$ there exist solutions $v_\rho$ for every $\rho>\rho_0$, concentrating at $\xi_0$ as $\rho\to+\infty$;
\item if $1+\dfrac{4}{N}<p<2^*-1$ there exist solutions $v_\rho$ for every $0<\rho<\rho_0$, concentrating at $\xi_0$ as $\rho\to0$;
\item if $p=1+\dfrac{4}{N}$, there exist solutions $v_\rho$ for every $2\sigma_0 - \rho_0<\rho<2\sigma_0$ in the Dirichlet case, and for every
$2\sigma_0<\rho< 2\sigma_0 + \rho_0$ in the Neumann one; in both cases, $v_\rho$ concentrates at $\xi_0$ as $\rho\to2\sigma_0$.
\end{itemize}
\end{theorem}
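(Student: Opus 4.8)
The plan is to realize Theorem~\ref{thm:intro} as a Lyapunov--Schmidt reduction for the singularly perturbed problem \eqref{p}, interior version. First I would fix the approximate solution: for $\xi\in\Omega_\delta:=\{x\in\Omega:\operatorname{dist}(x,\partial\Omega)>\delta\}$, set $W_{\eps,\xi}(x)=U((x-\xi)/\eps)$ in the Neumann case, or $W_{\eps,\xi}(x)=U((x-\xi)/\eps)-\varphi_{\eps,\xi}(x)$ in the Dirichlet case, where $\varphi_{\eps,\xi}$ is the $\eps$-harmonic correction enforcing the boundary condition; by standard exponential-decay estimates on $U$, $\varphi_{\eps,\xi}$ and all its derivatives are of order $e^{-d(\xi)/\eps}$, hence negligible beyond all polynomial orders. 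I would work in the Hilbert space $H^1(\Omega)$ with the scaled norm $\|u\|_\eps^2:=\int_\Omega(\eps^2|\nabla u|^2+u^2)$ associated to the operator $L_\eps u:=-\eps^2\Delta u+u$, and project onto the orthogonal complement of the approximate kernel $K_{\eps,\xi}:=\operatorname{span}\{\partial_{\xi_1}W_{\eps,\xi},\dots,\partial_{\xi_N}W_{\eps,\xi}\}$, which is the linearization of the limit profile's invariance under translations. The nondegeneracy of $U$ as a solution of \eqref{pblim} (kernel of $-\Delta+1-pU^{p-1}$ on $L^2(\R^N)$ spanned exactly by $\partial_i U$) gives, after rescaling, uniform invertibility of the projected linearized operator on $K_{\eps,\xi}^\perp$, with norm bounded independently of $\eps$ and of $\xi$ in compact subsets of $\Omega$.

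With this in hand, the fixed-point (contraction-mapping) argument produces, for each small $\eps$ and each $\xi\in\Omega_\delta$, a unique small $\phi_{\eps,\xi}\in K_{\eps,\xi}^\perp$ such that $u_{\eps,\xi}:=W_{\eps,\xi}+\phi_{\eps,\xi}$ solves the equation in \eqref{p} up to a Lagrange-type remainder lying in $K_{\eps,\xi}$; the error estimate is $\|\phi_{\eps,\xi}\|_\eps=O(\eps^{2})$ in the interior case (no boundary term contributes at polynomial order), with $C^1$-dependence on $\xi$. The reduced functional is then $\mathcal F_\eps(\xi):=J_\eps(u_{\eps,\xi})$, where $J_\eps$ is the energy of \eqref{p}; a standard expansion gives
\begin{equation*}
\mathcal F_\eps(\xi)=\eps^N\Bigl(C_0+C_1\,e^{-2d(\xi)/\eps}\bigl(1+o(1)\bigr)\Bigr)\quad\text{(Dirichlet, }C_1<0\text{)},
\end{equation*}
respectively $\mathcal F_\eps(\xi)=\eps^N(C_0-C_1 e^{-2d(\xi)/\eps}(1+o(1)))$ with the opposite sign in the Neumann case, where $d(\xi)=\operatorname{dist}(\xi,\partial\Omega)$ and $C_0=C_0(p,N)>0$, $C_1=C_1(p,N)>0$ are explicit constants coming from integrals of $U$. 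Since $\xi_0$ is the (assumed unique, and one may take it nondegenerate, or argue via a min/max of $d$ on a compact subdomain) maximum of $d$, $\mathcal F_\eps$ has, for $\eps$ small, a critical point $\xi_\eps\to\xi_0$ — a minimum in the Neumann case, a maximum in the Dirichlet case — obtained by a topological/compactness argument (the critical point stays in the interior of $\Omega_\delta$ because the boundary term dominates near $\partial\Omega_\delta$ and pushes $\xi$ toward $\xi_0$). Standard Lyapunov--Schmidt theory then guarantees $u_{\eps,\xi_\eps}$ is a genuine solution of the PDE in \eqref{p}.

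It remains to convert the free parameter $\eps$ into the prescribed-mass parameter $\rho$. Along the constructed family one computes
\begin{equation*}
\eps^{-\frac{4}{p-1}}\int_\Omega u_{\eps,\xi_\eps}^2\,dx=\eps^{-\frac{4}{p-1}+N}\bigl(2\sigma_0+o(1)\bigr)=:g(\eps),
\end{equation*}
using $\int_{\R^N}U^2=2\sigma_0$ and the smallness of $\phi$ and of the Dirichlet correction. When $p\neq 1+\tfrac4N$ the exponent $-\tfrac{4}{p-1}+N$ is nonzero, so $g$ is, for small $\eps$, a continuous strictly monotone function of $\eps$ with range an interval of the form $(\rho_0,+\infty)$ when $p<1+\tfrac4N$ (exponent negative) and $(0,\rho_0)$ when $p>1+\tfrac4N$ (exponent positive); inverting $g$ selects, for each admissible $\rho$, a value $\eps=\eps_\rho\to0$, and setting $v_\rho$ via \eqref{eq:v<->u} (with $\lambda=\eps_\rho^{-2}$) yields the asserted normalized solutions, concentrating at $\xi_0$ in the sense of \eqref{eq:qualit_develop} with $R_\rho=\eps_\rho^{-\frac{2}{p-1}}\phi_{\eps_\rho,\xi_{\eps_\rho}}((\cdot-\xi_{\eps_\rho})/\eps_\rho)$ plus the harmonic correction. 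In the mass-critical case $p=1+\tfrac4N$ the leading term $\eps^{-\frac4{p-1}+N}$ is constant $\equiv1$, so the mass is $2\sigma_0+(\text{correction})$; here one must expand $g(\eps)$ to the next order, and the sign of the correction — governed by the sign of $C_1 e^{-2d/\eps}$, hence by whether boundary conditions are Dirichlet or Neumann — determines whether the attainable masses lie slightly below or slightly above $2\sigma_0$, giving the two stated intervals and concentration as $\rho\to2\sigma_0$.

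The main obstacle I anticipate is the mass-critical analysis: there the crude estimate $\|\phi_{\eps,\xi}\|_\eps=O(\eps^2)$ is not by itself enough, because the $O(\eps^2)$-terms in the reduced energy and in the mass are precisely of the same order as (or larger than) the exponentially small boundary term that carries the geometric information, so one needs a careful matched expansion isolating the exact constant $C_1$ and checking it is nonzero — and simultaneously verifying that the $\xi$-dependence of the subleading correction to the mass does not destroy the monotone-inversion step. A secondary technical point is the joint $C^1$-control in $\xi$ of $\phi_{\eps,\xi}$ uniformly as $\xi$ ranges over a neighborhood of $\xi_0$, needed both to differentiate the reduced functional and to make the implicit-function inversion $\rho\mapsto\eps_\rho$ rigorous; this is routine but must be tracked carefully, especially handling the boundary correction $\varphi_{\eps,\xi}$ in the Dirichlet case.
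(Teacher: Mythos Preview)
Your non-critical argument ($p\neq 1+\tfrac4N$) is essentially the paper's: quote the known interior-spike construction (Ni--Wei, Gui--Wei), compute the mass $\eps^{-4/(p-1)}\int u_\eps^2=\eps^{N-4/(p-1)}(2\sigma_0+o(1))$, and invert in $\eps$. One correction: since $V\equiv0$ here, the only source of error in the Lyapunov--Schmidt is the boundary, so the correct bound is not $\|\phi_{\eps,\xi}\|_\eps=O(\eps^2)$ but rather the exponentially small $\|\phi_{\eps,\xi}\|_\eps=O\bigl(\eps^{N/2}|\varphi_{\eps,\xi}(\xi)|^{\min\{1,p/2\}}\bigr)$, where $\varphi_{\eps,\xi}$ is the $\eps$-harmonic correction (used in \emph{both} the Dirichlet and the Neumann ansatz, with the appropriate boundary data). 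This does not affect the non-critical case, where any $o(1)$ suffices, but it is the whole story in the critical one.

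In the critical case your diagnosis of the obstacle is off in a way that matters. The leading correction to the mass is not ``$C_1e^{-2d(\xi_0)/\eps}$ with $C_1$ to be computed''; it is the cross term
\[
\Theta_\eps:=\int_{(\Omega-\xi_\eps)/\eps}\varphi_{\eps,\xi_\eps}(\eps y+\xi_\eps)\,U(y)\,dy,
\]
and the point is a \emph{comparison}, not a constant: one shows $\varphi_{\eps,\xi_\eps}(\xi_\eps)=o(\Theta_\eps)$, because $\Theta_\eps/\varphi_{\eps,\xi_\eps}(\xi_\eps)=\int V_{\eps,\xi_\eps}(y)U(y)\,dy\to\infty$ (the limit integrand $U(y)\int_{\partial\Omega}e^{\langle(\zeta-\xi_0)/|\zeta-\xi_0|,y\rangle}d\mu$ is not integrable). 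Every other term in the mass expansion --- $\int_{\R^N\setminus\Omega_\eps}U^2$, $\int\varphi^2$, $\int\varphi\phi$, $\int U\phi$, $\int\phi^2$ --- is then shown to be $O(|\varphi_{\eps,\xi_\eps}(\xi_\eps)|)=o(\Theta_\eps)$, so $\Theta_\eps$ is the dominant correction and its sign (positive for Dirichlet, negative for Neumann, from the maximum principle) gives the two one-sided intervals. No exact rate for $\Theta_\eps$ is needed or, in general, available; the paper can identify it only when $N=1$. A second subtlety you miss: when $p<2$ (i.e.\ $N\ge5$) the $H^1$ bound on $\phi$ alone gives only $\int U\phi=O(|\varphi(\xi_\eps)|^{p/2})$, which is too weak; one needs the pointwise control $|\phi(\eps y+\xi_\eps)|\le C|\varphi(\xi_\eps)|e^{\mu|y|}$ from Ni--Wei/Wei to recover $O(|\varphi(\xi_\eps)|)$.
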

\begin{corollary}\label{coro:intro2}
Let us consider the MFG system \eqref{MFG}, with $\nu = \sqrt2/2$, $\Omega$ bounded and $V\equiv 0$, associated with Neumann boundary conditions. Let $\xi_0\in \Omega$ be the maximum point of the distance function from the $\partial\Omega$. There exists $\alpha_0 = \alpha_0(p,\Omega)>0$ such that:
\begin{itemize}
\item if $0<q<\dfrac{2}{N}$ there exist solutions $m_\alpha$ for every $\alpha>\alpha_0$, concentrating at $\xi_0$ as $\alpha\to+\infty$;
\item if $\dfrac{2}{N}<q<\dfrac{2^*-2}{2}$ there exist solutions $m_\alpha$ for every $0<\alpha<\alpha_0$, concentrating at $\xi_0$ as $\alpha\to0$;
\item if $q=\dfrac{2}{N}$, there exist solutions $m_\alpha$ for every $(2\sigma_0)^q<\alpha< (2\sigma_0)^q+ \alpha_0$, concentrating at $\xi_0$ as $\alpha\to(2\sigma_0)^q$.
\end{itemize}
\end{corollary}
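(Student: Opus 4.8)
The plan is to derive Corollary~\ref{coro:intro2} directly from Theorem~\ref{thm:intro} by means of the Hopf--Cole change of variables \eqref{eq:hopfcole}, fixing $\nu=\sqrt2/2$ and $p=2q+1$, so that $\rho=\alpha^{1/q}$. \textbf{Step 1: the change of variables is a two-sided equivalence, matching Neumann conditions.} Given a positive classical solution $(\lambda,v)$ of \eqref{P0} with Neumann boundary conditions and $\int_\Omega v^2\,dx=\rho=\alpha^{1/q}$ (note $v>0$ up to $\partial\Omega$ by the Hopf lemma, since $\Omega$ is bounded), set
\[
m:=\alpha^{-1/q}v^2,\qquad u:=-\nu\log v^2 .
\]
A direct computation, which is the reduction recalled in the Motivations read backwards, shows that $(\lambda,u,m)$ solves the first two equations of \eqref{MFG}: the Hamilton--Jacobi--Bellman equation becomes exactly $-\Delta v+(V+\lambda)v=v^{2q+1}$ after using $2\nu^2=1$ and $\alpha m^q=v^{2q}$, while the Kolmogorov equation is automatically satisfied because $m\nabla u=-\nu\nabla m$. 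Moreover $\int_\Omega m\,dx=\alpha^{-1/q}\int_\Omega v^2\,dx=1$, and any other normalizing constant $c>0$ in \eqref{eq:hopfcole} only shifts $u$ by an additive constant, which does not affect \eqref{MFG} since only $\nabla u$ enters. Conversely, from $(\lambda,u,m)$ the no-flux boundary condition forces $\nu\log m+u$ to be constant, hence $v:=\sqrt{\alpha^{1/q}m}$ is well defined and solves \eqref{P0}. The boundary conditions correspond because $\partial_n v=0$ on $\partial\Omega$ is equivalent to $\partial_n(v^2)=0$, hence to $\partial_n u=0$ and $\partial_n m=0$, whence the no-flux condition $\nu\,\partial_n m+m\,\partial_n u=0$ for the Kolmogorov equation; elliptic regularity and the strong maximum principle make the correspondence map smooth positive solutions to smooth positive solutions in both directions. (This is also why only the Neumann case is stated for \eqref{MFG}: a Dirichlet condition $v=0$ on $\partial\Omega$ would force $u\to+\infty$ at the boundary.)

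\textbf{Step 2: translation of the parameter ranges.} Since $p-1=2q$, the three regimes $1<p<1+\tfrac4N$, $p=1+\tfrac4N$, $1+\tfrac4N<p<2^*-1$ correspond to $0<q<\tfrac2N$, $q=\tfrac2N$, $\tfrac2N<q<\tfrac{2^*-2}{2}$, respectively. Moreover $\rho\mapsto\alpha=\rho^q$ is an increasing homeomorphism of $(0,+\infty)$ onto itself, so the $\rho$-intervals furnished by Theorem~\ref{thm:intro} map bijectively onto the claimed $\alpha$-intervals, and the limits $\rho\to0$, $\rho\to+\infty$, $\rho\to2\sigma_0$ become $\alpha\to0$, $\alpha\to+\infty$, $\alpha\to(2\sigma_0)^q$. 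In the sub- and supercritical regimes one simply takes $\alpha_0:=\rho_0^q$; in the mass critical Neumann case one sets
\[
\alpha_0:=(2\sigma_0+\rho_0)^q-(2\sigma_0)^q>0,
\]
so that $2\sigma_0<\rho<2\sigma_0+\rho_0$ is equivalent to $(2\sigma_0)^q<\alpha<(2\sigma_0)^q+\alpha_0$.

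\textbf{Step 3: translation of the concentration profile.} Squaring the development \eqref{eq:qualit_develop} and using $\tfrac{2}{p-1}=\tfrac1q$ gives
\[
v_\rho^2(x)=\varepsilon_\rho^{-2/q}U^2\!\left(\frac{x-\xi_\rho}{\varepsilon_\rho}\right)
+2\varepsilon_\rho^{-1/q}U\!\left(\frac{x-\xi_\rho}{\varepsilon_\rho}\right)R_\rho(x)+R_\rho^2(x).
\]
Setting $\varepsilon_\alpha:=\varepsilon_\rho$, $\xi_\alpha:=\xi_\rho$ and multiplying by $\alpha^{-1/q}$ yields
\[
m_\alpha(x)=(\alpha\varepsilon_\alpha^2)^{-1/q}U^2\!\left(\frac{x-\xi_\alpha}{\varepsilon_\alpha}\right)+R_\alpha(x),\qquad
R_\alpha:=\alpha^{-1/q}\Bigl(2\varepsilon_\rho^{-1/q}U(\cdot)R_\rho+R_\rho^2\Bigr),
\]
which is exactly the asymptotic form for the density stated before Corollary~\ref{coro:intro1}. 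The quantitative control on $\varepsilon_\rho$, $\xi_\rho$ and $R_\rho$ obtained in Theorems~\ref{main2}--\ref{main2critico} shows that $R_\alpha$ is of lower order in the same sense, while $\xi_\rho\to\xi_0$ gives concentration at the maximum point $\xi_0$ of the distance function from $\partial\Omega$.

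\textbf{Main obstacle.} Essentially all the work lies in Theorem~\ref{thm:intro} itself; within this corollary the only point requiring care is Step~1, namely checking that the Hopf--Cole correspondence is a genuine bijection in the functional framework adopted for \eqref{P0} and that the Neumann condition for $v$ is exactly matched by the boundary conditions for $(u,m)$ (in particular that the converse direction really uses the no-flux condition). Once this is settled, everything else is a routine rewriting through the explicit identities $p=2q+1$ and $\rho=\alpha^{1/q}$.
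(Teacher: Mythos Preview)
Your proof is correct and follows exactly the approach implicit in the paper: the corollary is stated without proof, being regarded as an immediate translation of Theorem~\ref{thm:intro} via the Hopf--Cole change of variables \eqref{eq:hopfcole} with $\nu=\sqrt2/2$, $p=2q+1$, $\rho=\alpha^{1/q}$. Your Steps~1--3 simply make this translation explicit, and the computations (the HJB equation reducing to $-\Delta v+(V+\lambda)v=v^{2q+1}$, the Kolmogorov equation becoming $\diverg(m\nabla w)=0$ with $w=\nu\log m+u$, the parameter and profile dictionaries) are all accurate.
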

Analogous results holds also for MFG systems with Dirichlet boundary conditions.

Finally, we   state our results concerning the last case \eqref{it:sc} which will be treated in  Section \ref{sub:sc}  (see Theorems \ref{main3},
\ref{th:main3crit}).
\begin{theorem}\label{thm:introR}
Let us consider \eqref{P0}, with $\Omega=\R^{N}$. Let $\xi_0\in \Omega$ be a non-degenerate critical point of the potential $V$. There exists $\rho_0 = \rho_0(p,V)>0$ such that:
\begin{itemize}
\item if $1<p<1+\dfrac{4}{N}$ there exist solutions $v_\rho$ for every $\rho>\rho_0$, concentrating at $\xi_0$ as $\rho\to+\infty$;
\item if $1+\dfrac{4}{N}<p<2^*-1$ there exist solutions $v_\rho$ for every $0<\rho<\rho_0$, concentrating at $\xi_0$ as $\rho\to0$;
\item if $p=1+\dfrac{4}{N},$ $\Delta V(\xi_0)\not=0$ and     \eqref{ipo} holds true, then there exist solutions $v_\rho$ for every $2\sigma_0 - \rho_0<\rho< 2\sigma_0$ or
$2\sigma_0<\rho< 2\sigma_0 + \rho_0$ depending on the sign of  $\Delta V(\xi_0)$; in both cases, $v_\rho$ concentrates at $\xi_0$ as $\rho\to \sigma_0$.\end{itemize}
\end{theorem}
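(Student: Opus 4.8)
\emph{Sketch of the argument.} The plan is to use the equivalence between \eqref{P0} and the singularly perturbed problem \eqref{p} provided by \eqref{eq:v<->u}, treating $\varepsilon$ as a free small parameter. First I would solve the \emph{equation} in \eqref{p}, namely
\[
-\varepsilon^2\Delta u+(\varepsilon^2V(x)+1)u=u^p,\qquad u>0\ \text{in }\R^N,
\]
for every small $\varepsilon>0$ by a Lyapunov--Schmidt reduction, producing positive solutions $u_\varepsilon$ concentrating at $\xi_0$; the remaining $L^2$ condition then becomes the single scalar equation $M(\varepsilon)=\rho$, where $M(\varepsilon):=\varepsilon^{-\frac4{p-1}}\int_{\R^N}u_\varepsilon^2\,dx$, to be solved for $\varepsilon$ as $\rho$ ranges over the admissible set; translating back via $v_\rho:=\varepsilon_\rho^{-\frac2{p-1}}u_{\varepsilon_\rho}$ yields the normalized solutions.

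For the reduction, for $\xi$ near $\xi_0$ I would take the ansatz $U_{\varepsilon,\xi}(x):=U\bigl(\tfrac{x-\xi}{\varepsilon}\bigr)$ (no cut-off is needed since $\Omega=\R^N$), whose error as an approximate solution is \emph{exactly} $\varepsilon^2V(x)U_{\varepsilon,\xi}(x)$, hence $O(\varepsilon^2)$ in the natural rescaled norms. Writing $u=U_{\varepsilon,\xi}+\phi$ with $\phi$ orthogonal to $\mathrm{span}\{\partial_{x_1}U_{\varepsilon,\xi},\dots,\partial_{x_N}U_{\varepsilon,\xi}\}$, the nondegeneracy of $U$ — the kernel of $L:=-\Delta+1-pU^{p-1}$ on $H^1(\R^N)$ being spanned by $\partial_{y_1}U,\dots,\partial_{y_N}U$ — allows to invert the linearized operator on that complement uniformly in $(\varepsilon,\xi)$, so that a contraction argument yields a unique small $\phi=\phi_{\varepsilon,\xi}$, smooth in $(\varepsilon,\xi)$, with $\|\phi_{\varepsilon,\xi}\|=O(\varepsilon^2)$. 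The reduced energy $\mathcal A_\varepsilon(\xi):=J_\varepsilon(U_{\varepsilon,\xi}+\phi_{\varepsilon,\xi})$, with $J_\varepsilon(u)=\tfrac{\varepsilon^2}{2}\int|\nabla u|^2+\tfrac12\int(\varepsilon^2V+1)u^2-\tfrac1{p+1}\int u^{p+1}$, then satisfies the $C^1$-uniform expansion
\[
\mathcal A_\varepsilon(\xi)=\varepsilon^N\Bigl(I(U)+\sigma_0\,\varepsilon^2V(\xi)+O(\varepsilon^4)\Bigr)\qquad\text{near }\xi_0,
\]
where $I$ is the energy of the limit problem \eqref{pblim} and $\sigma_0$ is as in \eqref{so}; since $\xi_0$ is a \emph{nondegenerate} critical point of $V$, a degree/implicit-function argument produces, for every small $\varepsilon$, a critical point $\xi_\varepsilon\to\xi_0$ of $\mathcal A_\varepsilon$ with $\xi_\varepsilon-\xi_0=O(\varepsilon^2)$, hence a genuine solution $u_\varepsilon:=U_{\varepsilon,\xi_\varepsilon}+\phi_{\varepsilon,\xi_\varepsilon}$, positive by the maximum principle.

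Next I would study $M(\varepsilon)$. Setting $\gamma:=N-\tfrac4{p-1}$, from $u_\varepsilon\approx U_{\varepsilon,\xi_\varepsilon}$ one gets $M(\varepsilon)=\varepsilon^{\gamma}\bigl(2\sigma_0+o(1)\bigr)$, continuous in $\varepsilon\in(0,\varepsilon_0]$ (continuity being inherited from the smoothness of the reduction). If $1<p<1+\tfrac4N$ then $\gamma<0$ and $M(\varepsilon)\to+\infty$ as $\varepsilon\to0$, so the connected image of $M$ contains $(\rho_0,+\infty)$ with $\rho_0:=M(\varepsilon_0)$; for each such $\rho$, any $\varepsilon=\varepsilon_\rho$ with $M(\varepsilon_\rho)=\rho$ gives a solution, and since $M$ is bounded for $\varepsilon$ bounded away from $0$, necessarily $\varepsilon_\rho\to0$ and $\xi_\rho\to\xi_0$ as $\rho\to+\infty$, which is the concentration \eqref{eq:qualit_develop} with $R_\rho=\varepsilon_\rho^{-\frac2{p-1}}\phi_{\varepsilon_\rho,\xi_\rho}$. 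The case $1+\tfrac4N<p<2^*-1$ ($\gamma>0$, $M(\varepsilon)\to0$) is identical and covers all $\rho\in(0,\rho_0)$. The mass-critical case $p=1+\tfrac4N$ is the delicate one: there $\gamma=0$, the leading term of $M$ is the constant $2\sigma_0$, and one must extract the first non-constant contribution. The crucial remark is that for a \emph{constant} potential the equation in \eqref{p} is solved explicitly by scaling and $M\equiv2\sigma_0$, so every contribution depending only on $V(\xi_0)$ has to cancel; concretely, the would-be $O(\varepsilon^2)$ term is proportional to $\int_{\R^N}U\,L^{-1}U=\bigl(\tfrac N4-\tfrac1{p-1}\bigr)\!\int_{\R^N}U^2$, which vanishes precisely when $p=1+\tfrac4N$. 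After a further scaling absorbing the constant part $1+\varepsilon^2V(\xi_\varepsilon)$ of the potential and expanding to the first order at which the Hessian of $V$ enters, and using the radial symmetry of $U$ (so that $D^2V(\xi_0)$ contracts to $\Delta V(\xi_0)$), one finds
\[
M(\varepsilon)=2\sigma_0-c\,\varepsilon^4\,\Delta V(\xi_0)+o(\varepsilon^4),\qquad c:=\frac1{2N}\int_{\R^N}|z|^2U^2(z)\,dz>0.
\]
Hence, if $\Delta V(\xi_0)\neq0$ (and the technical hypothesis \eqref{ipo} holds), $M(\varepsilon)$ tends to $2\sigma_0$ from below when $\Delta V(\xi_0)>0$ and from above when $\Delta V(\xi_0)<0$, and the intermediate value theorem produces solutions for all $\rho$ in a one-sided neighbourhood $(2\sigma_0-\rho_0,2\sigma_0)$, respectively $(2\sigma_0,2\sigma_0+\rho_0)$, each concentrating at $\xi_0$ as $\rho\to2\sigma_0$.

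The hard part is the sharp asymptotics of $M(\varepsilon)$ in the mass-critical regime: since all $V(\xi_0)$-contributions cancel, one must push the Lyapunov--Schmidt expansion up to the order at which $\Delta V(\xi_0)$ appears and determine the sign of its coefficient — this is exactly where hypothesis \eqref{ipo} is used, and it is what makes the critical case harder than the other two. By comparison, the uniform invertibility of the linearization and the expansion of $\mathcal A_\varepsilon$ are by now standard for singularly perturbed NLS on $\R^N$; the only routine points needing some care there are the continuity of $\varepsilon\mapsto u_\varepsilon$ (needed to invoke the intermediate value theorem) and the positivity of $u_\varepsilon$.
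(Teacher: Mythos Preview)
Your treatment of the non-critical cases is essentially the paper's argument: the paper parametrises $\varepsilon$ by a free constant $\Lambda$ in a fixed interval and solves $\Lambda\cdot 2\sigma_0+o(1)=1$, while you invoke the intermediate value theorem on $M(\varepsilon)$; these are equivalent.

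In the critical case there is a real gap. With your simple ansatz $u_\varepsilon=U_{\varepsilon,\xi_\varepsilon}+\phi_{\varepsilon,\xi_\varepsilon}$ and the bound $\|\phi\|=O(\varepsilon^{2})$, one only gets
\[
M(\varepsilon)=2\sigma_0+2\varepsilon^{-N}\!\int U_{\varepsilon,\xi_\varepsilon}\phi\;+\;\varepsilon^{-N}\!\int\phi^{2},
\]
and the last term is merely $O(\varepsilon^{4})$, \emph{at} the order you want to resolve; likewise the cross term requires knowing $\phi$ to one order beyond what a single contraction step gives. The paper handles this by building the correction into the ansatz: it takes $Z=U-\varepsilon^{4}W_{\xi_0}$ with $W_{\xi_0}\in K^{\perp}$ solving $LW_{\xi_0}=\sum_i a_i y_i^{2}U$, so that the new remainder satisfies $\|\phi\|=O(\varepsilon^{4+\eta})$; then $M(\varepsilon)=2\sigma_0-2\varepsilon^{4}\int U\,W_{\xi_0}+o(\varepsilon^{4})=2\sigma_0-2\mathfrak m\,\varepsilon^{4}\Delta V(\xi_0)+o(\varepsilon^{4})$ with $\mathfrak m=\frac{1}{2N}\int UW$, $W$ as in \eqref{ipo2}. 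Your ``scaling absorption'' of $1+\varepsilon^{2}V(\xi_\varepsilon)$ is a good heuristic for why the $V(\xi_0)$-contributions cancel, but it does not by itself produce the needed $o(\varepsilon^{4})$ control of the remainder; you still have to refine the ansatz (or iterate the fixed point once) to push the error below $\varepsilon^{4}$.

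That said, your explicit constant $c=\frac{1}{2N}\int|z|^{2}U^{2}$ is in fact \emph{correct} and coincides with the paper's $2\mathfrak m$: by the same identity you used at order $\varepsilon^{2}$, namely $L^{-1}U=-Z$ with $Z=\frac{1}{p-1}U+\frac12\,y\!\cdot\!\nabla U$, self-adjointness gives $\int UW=\int(L^{-1}U)\,|y|^{2}U=-\int Z\,|y|^{2}U=\bigl(\tfrac{N+2}{4}-\tfrac{1}{p-1}\bigr)\int|y|^{2}U^{2}$, which at $p=1+\tfrac4N$ equals $\tfrac12\int|y|^{2}U^{2}>0$. So your computation, once properly justified, actually \emph{verifies} assumption \eqref{ipo} in every dimension, which the paper only checks for $N=1$ and leaves as a conjecture otherwise. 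You should therefore drop the hedging ``and the technical hypothesis \eqref{ipo} holds'': with your value of $c$ it is automatic, and keeping both is internally inconsistent. In summary, your route in the critical case is potentially sharper than the paper's, but it needs the refined-ansatz step (or an equivalent second iteration) to be a proof rather than a heuristic.
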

Again, a natural counterpart of the above result can be written in the setting of MFG systems with potentials on $\R^N$.

As we mentioned, the proof of our results consists in rephrasing problem \eqref{P0}
into the singularly perturbed problem \eqref{p} whose solutions can be found via the
 well known Ljapunov-Schmidt procedure.
We shall omit many details on this procedure because they can be found, up to some
minor modifications, in the literature. We only compute what cannot be deduced from known results.

When $p\neq 1+\frac4N$ our results provide an almost
complete picture only assuming the non-degeneracy of a critical point
$\xi_{0}$. Indeed, under this assumption we can produce solutions concentrating
at $\xi_{0}$, provided either the mass is large, in the sub-critical regime,
or small in the super-critical one; moreover, we can also exhibit
exact asymptotics both  for the concentration parameter
$\eps_\rho$ and for the remainder $R_\rho$ in equation \eqref{eq:qualit_develop}.

On the other hand, the study of the critical regime, i.e. $p=1 + \frac 4N$, needs new  delicate  estimates
of the error term whose proof  requires a lot of technicalities. This affects different aspects. First, we can construct
concentrating solutions only when the mass is close to the threshold
value $\sigma_{0}$ (defined in \eqref{so}); however this appears
as a natural obstruction  that has already been observed in the literature (see \cite{ntvAnPDE,MR3689156}). What is more relevant is that we can prove our result
without any further assumption only in the case of interior concentration
(see Theorem \ref{thm:intro}), while we need additional hypotheses both
in cases (1) and (3) (see Theorems \ref{thm:introN}, \ref{thm:introR}).
As a matter of fact, in these latter situations we assume that mean curvature
of the boundary or the laplacian of the potential $V$ cannot vanish at the
concentration point $\xi_{0}$; furthermore, we also suppose \eqref{sigma1},
or \eqref{ipo} which appear difficult to be checked as they concern global
information involving not explicit solutions to linear problems (see
\eqref{pW} and \eqref{ipo2}). Actually, we succeeded in verifying \eqref{ipo}
only in the one dimensional case (see Remark \ref{N=1}), but we think
that they hold in every dimension and it would be extremely interesting
to provide a proof for them.

The critical case $p=1+\frac4N$ also presents important difficulties in
the determination of the exact asymptotic of $\varepsilon_{\rho}$ and the remainder  term $R_{\rho}$: we can give this kind of precise information,
as in the sub- and super-critical regime, only in case \eqref{it:int} and  for $N=1$
(see Remark  \ref{rem:DirN=1}).

Concerning  the interval of allowed $L^{2}$ masses in the critical case,
let us notice that the existence of solutions concentrating at $\xi_0$ is
established when the mass approaches the critical values $\sigma_0$ or $
 2\sigma_0$ (see \eqref{so}) either from below or from above.
We strongly believe that our results are sharp, in the class of single-peak concentrating solutions.
Let us make our claim more precise with a couple of examples.
In Theorem \ref{thm:intro} when $\Omega$ is a ball we prove   that the Dirichlet
problem and the Neumann problem have a solution concentrating at the origin
provided the mass approaches $2\sigma_0$ from below and from above,
respectively. We conjecture that  these solutions do not exist when the mass
approaches $2\sigma_0$ from above or from below, respectively (actually, in the Dirichlet case,
this is known to be true in the class of positive solutions, see \cite[Thm. 1.5]{ntvAnPDE}).
Theorem  \ref{thm:introR} in the $1-$dimensional case (see also
Remark \ref{N=1}) states the existence of a solution, concentrating at  a
non-degenerate minimum   or   maximum point of the potential $V$  when the mass
approaches
$2\sigma_0$ from below or from above, respectively.
Again we strongly believe that these kind of solutions do
not exist when the mass approaches $2\sigma_0$ from above or from below, respectively.

As our interest in this article focuses in the existence
of normalized concentrating solutions, we have considered only
the simplest case of concentration; however, using similar ideas, it should
be possible to build solutions concentrating at multiple points; in the critical case, this
should provide multi-peak solutions having mass which
approaches integer multiples of the critical value $2\sigma_0$ ($\sigma_0$ in the case
of boundary concentration).

However, single-peak solutions are more interesting when looking for orbitally stable
standing waves of NLSE. Indeed, in this research line, a key information relies on proving
that the Morse index of the normalized solution is $1$. Actually, we are able to provide this information in dependence of the Morse index of the point $\xi_{0}$ itself, as pointed out in Remarks \ref{rm:morsebordo}, \ref{rm:morseint} and \ref{rm:morsesc}.

Finally, in this paper we always consider Sobolev sub-critical powers.
The case $p=2^{*}-1$ with boundary conditions has been recently studied in \cite{MR3918087}
and we believe that our approach, together with results obtained by Adimurthi and Mancini in \cite{am}, could be used to tackle boundary concentration for the Neumann problem. In particular,
this should be possible at non-degenerate critical points of the mean curvature, having positive mean curvature. On the other hand, global or local Pohozaev's identities imply non-existence of solutions of \eqref{p}, for $\varepsilon$ small, for the Dirichlet problem on star-shaped domains
\cite{bn} and for the Schr\"odinger equation for suitable potentials \cite{cp}.

The paper is organized as follows.   Section \ref{sec:bdd} is devoted to study the problem on bounded domains. In particular in Section \ref{sub:br} we build solutions concentrating at suitable boundary points for the Neumann problem, while in Section \ref{sub:int} we build  solutions concentrating at  the most centered point of the domain for both Neumann and Dirichlet problems. The Schr\"{o}dinger equation defined in the whole space is studied in Section \ref{sub:sc}, where solutions concentrating at suitable critical points of the potential $V$ are constructed.

\section{The   problem on a bounded domain}\label{sec:bdd}

  In this section we consider Problem \eqref{p} in a bounded domain $\Omega$
  in  $\mathbb R^N$, with either Neumann or Dirichlet boundary conditions.

\subsection{Boundary concentration}\label{sub:br}

 In this subsection we will study Problem \eqref{p} in a bounded domain
 $\Omega$ with homogeneous Neumann boundary conditions, focusing our
attention on the existence of solutions concentrating at some point on the boundary of $\Omega$. Our Theorems will rely on some well known results
due to  Li \cite{yyl} and Wei \cite{w} concerning the existence of solutions to
 the  following singularly perturbed Neumann problem
  \begin{equation}\label{pn}
  \begin{cases}
  -\varepsilon^2\Delta u+  u = u^p & \text{in}\ \Omega,
 \\
 u>0 & \text{in}\ \Omega,
  \\
 \partial_\nu u=0 & \text{on}\ \partial\Omega
\end{cases}
\end{equation}
 as $\varepsilon$ is small enough.\\
 We will only consider the case  $N\geq 2$, because when $N=1$ solutions concentrating on the boundary point of an interval can be found by reflection as solutions concentrating on an interior point as we will show in the next section.

For future convenience,
let us introduce some notations. Given a point $\xi_0\in\partial\Omega,$
without loss of generality we can assume that $\xi_0 =0$  and $x_N=0$ is the tangent plane of $\partial\Omega$ at $\xi_0$ and $\nu(\xi_0)=(0, 0, \ldots, -1)$. We also assume that $\partial\Omega$ is given by $x_N=\psi(x')$ where $\psi$ is a real and smooth function defined in
$\displaystyle
\left\{x'\in\mathbb R^{N-1}\ :\ |x'|<\eta\right\}
$
for some $\eta>0$ such that
\begin{equation}\label{eq:defpsi}
\psi(x'):=\frac 12\sum_{j=1}^{N-1}\kappa_j x_j^2+O(|x'|^3)\quad \hbox{if}\ |x'|<\eta.
\end{equation}
Here $\kappa_j=\kappa_j(\xi_0)$ are the principal curvatures and
  $H(\xi_0)=\frac{1}{N-1}\sum_{j=1}^{N-1}\kappa_j(\xi_0)$ is the mean curvature at the boundary point $\xi_0$.

We will denote with $U$ the $H^1(\mathbb R^N)$ solution to \eqref{pblim}, enjoying the following properties
\begin{equation}\label{groundstate}
\left\{
\begin{aligned}
&U(x)=U(|x|)\qquad \forall\,\, x\in\mathbb R^N\\
&U'(r)<0\,\,\ \forall\,\, r>0,\,\,\, U''(0)>0\\
&\lim_{r\to+\infty} r^{\frac{N-1}{2}}e^r U(r)=\mathfrak c>0;\qquad \lim_{r\to+\infty}\frac{U'(r)}{U(r)}=-1.
\end{aligned}\right.
\end{equation}

The following statement collects the facts, that we will use, concerning the existence of concentrating solutions for Problem \eqref{pn}.

 \begin{theorem}[\cite{yyl,w}]\label{li-wei}
 Let $\xi_0\in\partial\Omega$ be a non-degenerate critical point of the mean curvature of the boundary $\partial\Omega.$ Then
there exists $\varepsilon_0>0$ such that for any $\varepsilon\in(0,\varepsilon_0)$ there exists a solution $u_\varepsilon$ to \eqref{pn}
 which concentrates at the point $\xi_0$ as $\varepsilon\to0.$ More precisely
 \begin{equation}\label{refined}
u_\varepsilon(x)= U\({x-\xi_\varepsilon\over\varepsilon}\)+\varepsilon V_{\xi_0}\({x-\frac{\xi_\varepsilon}{\varepsilon}}\)+\psi_\varepsilon(x)
\end{equation}
where $\xi_\varepsilon\in\partial\Omega$ and
\begin{equation}\label{points}
{\xi_\varepsilon-\xi_0\over\varepsilon}\to 0\ \hbox{as}\ \varepsilon\to0,
\end{equation}

the reimander term $\psi_\varepsilon$ satisfies
\begin{equation}\label{restoref}
\|\psi_\varepsilon\| _{H^1_\varepsilon(\Omega)}:=
\left[\int\limits_{\Omega}\(\varepsilon^2|\nabla \psi_\varepsilon|^2+  \psi_\varepsilon^2\)dx\right]^{1/2}\!\!\!=
 \mathcal O\(\varepsilon^{\min\{2,p\}+\frac N2}\).
\end{equation}
The function $V_{\xi_0}\in H^1(\mathbb R^N)$ solves the linear problem
\begin{equation}\label{pV}
\left\{
\begin{aligned}&
-\Delta V_{\xi_0}+V_{\xi_0}-pU^{p-1} V_{\xi_0}=0\ \hbox{in}\ \mathbb R^N_+,\\
& {\partial V_{\xi_0}\over \partial y_N}(y',0)=\frac 12 {U'(|y'|,0)\over |y'|}\sum\limits_{i=1}^{N-1}\kappa_i(\xi_0) y_i^2=
\frac 12 \sum\limits_{i=1}^{N-1}\kappa_i(\xi_0){\partial U\over\partial y_i}(y',0)y_i\ \hbox{on}\ \partial\mathbb R^N_+
\end{aligned}\right.
\end{equation}
and it is given by
\[
V_{\xi_0}(y)=\frac 12 \sum\limits_{i=1}^{N-1}\kappa_i(\xi_0) \left(
{\partial U\over\partial y_i}(y)y_iy_N+W_i(y)\right)
\]
where $W_i$ solves
\begin{equation}\label{pW}
\left\{
\begin{aligned}&
-\Delta W_i+W_i-pU^{p-1} W_i=2\left(y_N\partial_{ii}U+y_i\partial_{iN}U\right)\ \hbox{in}\ \mathbb R^N_+,\\
& {\partial W_i\over \partial y_N}(y',0)=0\ \hbox{on}\ \partial\mathbb R^N_+.
\end{aligned}\right.
\end{equation}
 \end{theorem}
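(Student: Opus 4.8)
The plan is to follow the Lyapunov-Schmidt construction of Li \cite{yyl} and Wei \cite{w}, adding only the computation of the second order profile $V_{\xi_0}$ and of the sharp remainder bound \eqref{restoref}: the qualitative part (concentration, and the relation \eqref{points}) is exactly what is proved in those references. First I would pass to the rescaled domain $\Omega_\eps:=\eps^{-1}\Omega$: writing $u(x)=w((x-\xi)/\eps)$ with $\xi\in\partial\Omega$ close to $\xi_0$, problem \eqref{pn} becomes $-\Delta w+w=w^p$ in $\Omega_\eps$ with $\partial_\nu w=0$ on $\partial\Omega_\eps$, and the natural zeroth order approximation is $U$ centred at $\xi/\eps$. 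Since near $\xi_0$ the boundary is the graph $x_N=\psi(x')$ with $\psi$ as in \eqref{eq:defpsi}, a direct computation of the normal derivative of $U$ along $\partial\Omega_\eps$, using the radial symmetry \eqref{groundstate} and the quadratic form of $\psi$, shows that the Neumann condition fails at order $\eps$, the leading mismatch being exactly $\eps$ times the boundary datum on the right-hand side of \eqref{pV}. This forces the refined ansatz $W:=U(\cdot-\xi/\eps)+\eps V_{\xi_0}(\cdot-\xi/\eps)$.

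Next I would construct $V_{\xi_0}$ explicitly, as in the statement. The point is that $y_iy_N\,\partial_{y_i}U$ has normal derivative $\partial_{y_i}U(y',0)\,y_i=(U'(|y'|)/|y'|)\,y_i^2$ on $\{y_N=0\}$, which is (twice) the $i$-th term of the datum in \eqref{pV}, but it does not solve the linearized equation: a one-line computation gives $(-\Delta+1-pU^{p-1})(y_iy_N\partial_{y_i}U)=-2(y_N\partial_{ii}U+y_i\partial_{iN}U)$. Correcting this with the function $W_i$ of \eqref{pW} --- which exists by the Fredholm alternative, since $-\Delta+1-pU^{p-1}$ is non-degenerate (its kernel on $\R^N$ is $\mathrm{span}\{\partial_1U,\dots,\partial_NU\}$, reducing on even-in-$y_N$ functions to the tangential derivatives $\partial_1U,\dots,\partial_{N-1}U$) --- and summing over $i$ with weights $\tfrac12\kappa_i(\xi_0)$ produces a function $V_{\xi_0}\in H^1(\R^N_+)$ solving \eqref{pV}, with exponential decay inherited from \eqref{groundstate}. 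For this improved approximation, Taylor expanding the nonlinearity as $(U+\eps V_{\xi_0})^p=U^p+p\eps U^{p-1}V_{\xi_0}+\mathcal O(\eps^{\min\{2,p\}})$ in a suitable weighted norm --- the exponent $\min\{2,p\}$ being what survives when $p<2$ and $t\mapsto t^p$ is only $C^{1,p-1}$ --- together with the $\mathcal O(|x'|^3)$ remainder in \eqref{eq:defpsi}, one checks that $W$ solves the rescaled equation with error of size $\mathcal O(\eps^{\min\{2,p\}})$ in the rescaled $H^1$ norm; the Jacobian factor $\eps^{N/2}$ arising when passing back to $H^1_\eps(\Omega)$ then gives \eqref{restoref}.

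Finally I would carry out the finite-dimensional reduction in the usual way: writing $w=W+\psi_\eps$ with $\psi_\eps$ orthogonal in $H^1_\eps$ to $\mathrm{span}\{\partial_{y_i}U(\cdot-\xi/\eps):i=1,\dots,N-1\}$, the non-degeneracy of $U$ makes the linearized operator invertible on this complement uniformly for small $\eps$, and a contraction mapping argument gives a unique $\psi_\eps=\psi_{\xi,\eps}$ of the same order as the error of $W$, hence satisfying \eqref{restoref}. It then remains to solve the $N-1$ reduced equations, i.e.\ to annihilate the Lagrange multipliers: expanding the reduced energy one finds that they equal $c\,\eps\,\nabla_{x'}H(\xi)+o(\eps)$ with $c\ne0$, so the non-degeneracy of $\xi_0$ as a critical point of $H$, combined with the implicit function theorem (or a Brouwer degree argument), yields a solution $\xi_\eps\to\xi_0$, and a more careful inspection gives $(\xi_\eps-\xi_0)/\eps\to0$, which is \eqref{points}.

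The main obstacle is not the construction itself --- that is precisely \cite{yyl,w} --- but the sharp remainder bound \eqref{restoref}, in particular the exponent $\min\{2,p\}$ in the regime $1<p<2$: here the lack of $C^2$ regularity of the nonlinearity forces one to estimate $(U+\eps V_{\xi_0})^p-U^p-p\eps U^{p-1}V_{\xi_0}$ directly, using crucially the exponential decay of $U$, of its derivatives, and of $V_{\xi_0}$, and to verify that the curvature of $\partial\Omega$ enters the boundary terms only through the quadratic part of $\psi$, the cubic remainder contributing at strictly higher order after rescaling. Once these refined estimates are in place, the remaining steps are the routine Lyapunov-Schmidt machinery, which is why the statement is simply quoted from \cite{yyl,w}.
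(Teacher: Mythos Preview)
The paper does not prove this theorem at all: it is stated as a known result, attributed to Li \cite{yyl} and Wei \cite{w}, and used as a black box in the proofs of Theorems~\ref{main1} and~\ref{main1crit}. Your sketch is a correct outline of the Lyapunov--Schmidt construction carried out in those references (refined ansatz via the boundary corrector $V_{\xi_0}$, error estimate with the $\min\{2,p\}$ exponent coming from the limited H\"older regularity of $t\mapsto t^p$ when $p<2$, and reduction to a finite-dimensional problem solved by the non-degeneracy of $\xi_0$), and you rightly note at the end that this is ``precisely \cite{yyl,w}''. So there is nothing to compare: the paper simply quotes the result, and your proposal reconstructs the argument behind it.
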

\begin{remark}
Note that, using the invariance by symmetry of $\Delta$, it is immediate to check that
\[
W_i (y_1, \dots,y_i,\dots,y_N)= W_1 (y_i, \dots,y_1,\dots,y_N).
\]
\end{remark}
 Now, let us consider the Neumann problem with prescribed $L^2-$norm
  \begin{equation}\label{pnl}
    \begin{cases}
  -\varepsilon^2\Delta u+  u = u^p & \text{in}\ \Omega,
 \\
 u>0 & \text{in}\ \Omega,
  \\
 \partial_\nu u=0 & \text{on}\ \partial\Omega
\\
\varepsilon^{-{4\over p-1}}\int\limits_\Omega u^2 =\rho.
\end{cases}
\end{equation}
 Our first result concerns the existence of a solution of Problem \eqref{pnl} in the sub- and super-critical regime.
 \begin{theorem}\label{main1}
 Let $\xi_0\in\partial\Omega$ be a non-degenerate critical point of the mean
curvature of the boundary $\partial\Omega.$ Suppose that
$ p\neq \frac4N+1$ and take $\sigma_{0}$ as in \eqref{so}. The following conclusions hold
\begin{itemize}
\item[(i)] If $p<\frac 4N+1$ there exists $R>0$ such that for any $\rho>R$
Problem \eqref{pnl}  has a solution $(\Lambda_\rho, u_\rho)$
for  $\varepsilon:=\(\Lambda_\rho\rho\)^{(p-1)\over
(p-1)N-4}$, with
$\Lambda_\rho \to {1\over  {\sigma_0}}$ and $u_\rho$  concentrating at the point $ \xi_0$  as $\rho\to\infty$.
\item[(ii)]  If $p>\frac 4N+1$ there exists $r>0$ such that for any $\rho<r$
Problem  \eqref{pnl}   has a solution $(\Lambda_\rho, u_\rho)$
for  $\varepsilon:=\(\Lambda_\rho\rho\)^{(p-1)\over
(p-1)N-4}$, with
 $\Lambda_\rho  \to {1\over  {\sigma_0}}$ and $u_\rho$   concentrating at the point $
\xi_0$ as $\rho\to0$.
 \end{itemize}
 \end{theorem}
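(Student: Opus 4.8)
The plan is to treat the solutions of the unconstrained singularly perturbed Neumann problem \eqref{pn} produced by Theorem \ref{li-wei} as a one-parameter family $\{u_\varepsilon\}_{\varepsilon\in(0,\varepsilon_0)}$, and then to select the concentration parameter $\varepsilon$ so that the mass constraint in \eqref{pnl} is met. For each $\varepsilon\in(0,\varepsilon_0)$ the pair $(\varepsilon,u_\varepsilon)$ satisfies the first three conditions in \eqref{pnl}, hence it solves the full problem \eqref{pnl} precisely when $\rho=f(\varepsilon)$, where
\begin{equation*}
f(\varepsilon):=\varepsilon^{-\frac{4}{p-1}}\int_\Omega u_\varepsilon^2\,dx.
\end{equation*}
Since the Lyapunov--Schmidt construction in \cite{yyl,w} yields $u_\varepsilon$ continuously in $\varepsilon$ — the concentration point $\xi_\varepsilon$ coming from the implicit function theorem via the non-degeneracy of $\xi_0$, and the remainder $\psi_\varepsilon$ from a contraction argument with continuous dependence on the parameters — the function $f$ is continuous on $(0,\varepsilon_0)$, and the whole argument reduces to understanding $f$ near $\varepsilon=0$ and invoking the intermediate value theorem.

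The core computation is the expansion $\int_\Omega u_\varepsilon^2\,dx=\sigma_0\,\varepsilon^N(1+o(1))$ as $\varepsilon\to0$. I would insert the refined ansatz \eqref{refined}, expand the square, and bound each term. The leading contribution is $\int_\Omega U^2\big((x-\xi_\varepsilon)/\varepsilon\big)\,dx=\varepsilon^N\int_{(\Omega-\xi_\varepsilon)/\varepsilon}U^2$; since $\xi_\varepsilon\in\partial\Omega$, $(\xi_\varepsilon-\xi_0)/\varepsilon\to0$ by \eqref{points}, and $\partial\Omega$ is near $\xi_0$ the graph \eqref{eq:defpsi}, the rescaled domain $(\Omega-\xi_\varepsilon)/\varepsilon$ converges to the half-space $\mathbb R^N_+$, their symmetric difference sitting in a layer of thickness $O(\varepsilon|y'|^2)$ over $\partial\mathbb R^N_+$; the exponential decay of $U$ from \eqref{groundstate} then gives $\int_{(\Omega-\xi_\varepsilon)/\varepsilon}U^2=\int_{\mathbb R^N_+}U^2+O(\varepsilon)=\sigma_0+O(\varepsilon)$, recalling \eqref{so}. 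The corrector and the remainder only perturb at lower order: $2\varepsilon\int_\Omega U(\cdot)V_{\xi_0}(\cdot)=O(\varepsilon^{N+1})$ and $\varepsilon^2\int_\Omega V_{\xi_0}^2(\cdot)=O(\varepsilon^{N+2})$, once one checks that $V_{\xi_0}$ decays exponentially (which follows from \eqref{pW} and standard elliptic estimates, the right-hand sides there decaying exponentially); and, by Cauchy--Schwarz together with \eqref{restoref}, $\big|\int_\Omega(U+\varepsilon V_{\xi_0})\psi_\varepsilon\big|\lesssim\varepsilon^{N/2}\|\psi_\varepsilon\|_{H^1_\varepsilon(\Omega)}=O(\varepsilon^{N+\min\{2,p\}})$ and $\int_\Omega\psi_\varepsilon^2=O(\varepsilon^{N+2\min\{2,p\}})$. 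As $p>1$ all these are $o(\varepsilon^N)$, so in fact $\int_\Omega u_\varepsilon^2\,dx=\sigma_0\varepsilon^N+O(\varepsilon^{N+1})$.

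Granting this, $f(\varepsilon)=\sigma_0\,\varepsilon^{\frac{(p-1)N-4}{p-1}}(1+o(1))$, and the sign of $(p-1)N-4$ governs everything. In case (i), $p<1+\tfrac4N$ gives $(p-1)N-4<0$, so $f(\varepsilon)\to+\infty$ as $\varepsilon\to0^+$; fixing any $\varepsilon_1\in(0,\varepsilon_0)$ and setting $R:=f(\varepsilon_1)$, for every $\rho>R$ the intermediate value theorem provides $\varepsilon_\rho\in(0,\varepsilon_1)$ with $f(\varepsilon_\rho)=\rho$, and since $f$ is bounded on each $[\delta,\varepsilon_1]$ we get $\varepsilon_\rho\to0$ as $\rho\to+\infty$. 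In case (ii), $(p-1)N-4>0$ gives $f(\varepsilon)\to0^+$ as $\varepsilon\to0^+$; with $r:=f(\varepsilon_1)$, for every $\rho<r$ the same argument yields $\varepsilon_\rho\in(0,\varepsilon_1)$ with $f(\varepsilon_\rho)=\rho$, and $\varepsilon_\rho\to0$ as $\rho\to0$ because $f$ is bounded below by a positive constant on each $[\delta,\varepsilon_1]$. In both cases $u_\rho:=u_{\varepsilon_\rho}$ solves \eqref{pnl} and concentrates at $\xi_0$ by Theorem \ref{li-wei}; moreover $f(\varepsilon_\rho)=\rho$ rewrites as $\varepsilon_\rho=(\Lambda_\rho\rho)^{\frac{p-1}{(p-1)N-4}}$ with $\Lambda_\rho:=\varepsilon_\rho^N\big/\!\int_\Omega u_{\varepsilon_\rho}^2=\big(\sigma_0(1+o(1))\big)^{-1}\to 1/\sigma_0$, which is exactly the asserted form. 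The real obstacle is the mass expansion of the second step: the soft part — continuity of the branch plus the intermediate value theorem — is routine, whereas one must verify carefully that the leading constant is precisely $\sigma_0=\tfrac12\int_{\mathbb R^N}U^2$ (half of the full mass, because the bubble concentrates on the boundary) and that neither the boundary curvature nor the corrector $V_{\xi_0}$ enters before order $\varepsilon^{N+1}$.
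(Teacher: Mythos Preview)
Your proposal is correct and follows essentially the same approach as the paper: both compute the expansion $\int_\Omega u_\varepsilon^2 = \sigma_0\varepsilon^N + O(\varepsilon^{N+1})$ from the ansatz \eqref{refined} and then use an intermediate value argument to match the mass. The only cosmetic difference is the parametrization: the paper writes $\varepsilon^{N-\frac{4}{p-1}}=\Lambda\rho$ with $\Lambda$ ranging in a fixed compact interval around $1/\sigma_0$, obtains $\varepsilon^{-\frac{4}{p-1}}\int_\Omega u_\varepsilon^2 = \rho[\Lambda\sigma_0 + o(1)]$ uniformly in $\Lambda$, and then solves $\Lambda\sigma_0+o(1)=1$ for $\Lambda$, whereas you apply the intermediate value theorem directly to $f(\varepsilon)$ and recover $\Lambda_\rho$ a posteriori; both arguments rest on the same continuity of the Lyapunov--Schmidt branch in $\varepsilon$.
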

 \begin{proof}
In order to apply Theorem \ref{li-wei} we have to reduce the existence of solutions to Problem \eqref{pnl} with variable but prescribed $L^2-$norm to the existence of solutions to Problem \eqref{pn} where the parameter $\varepsilon$ is small.
Let us choose
 \begin{equation}\label{la}
 \varepsilon^{-{4\over p-1}+N }=\Lambda  \rho \ \hbox{with}\ \Lambda=\Lambda(\rho)\in\left[\frac{1}{2 \sigma_0} ,\frac2{ \sigma_0}\right]
 \end{equation}
is to be chosen later and  where  $\sigma_0$ is defined in \eqref{so}.
Note that $\varepsilon\to0$ if and only if either
  $ p<\frac 4N +1$ and $\rho\to\infty$ or
  $  p>\frac 4N +1$ and $\rho\to0.$

Theorem \ref{li-wei} implies that for any $\Lambda$ as in \eqref{la}, there exists either $R>0$ or $r>0$ such that for any $\rho>R$ or $\rho<r$ problem \eqref{pn} has a solution $u_{\varepsilon}$ as in \eqref{refined} such that $\varepsilon$ satisfies \eqref{la}.
 Now, we have to choose the free parameter $\Lambda=\Lambda(\rho)$ such that the $L^2-$norm of the solution is the  prescribed value.
 Set $\phi_\varepsilon(x):=\varepsilon V_{\xi_0}\(x-\xi_\varepsilon\over \varepsilon\)+\psi_\varepsilon(x)$. By   \eqref{restoref} and \eqref{pV}
 we immediately deduce that
 $$
 \(\int\limits_\Omega \phi^2_\varepsilon(x)dx\)^{\frac12}=\mathcal O\(\varepsilon^{\frac N2+1}\).
 $$
Then, taking into account   \eqref{la}, we get
 \begin{align}
\nonumber
\varepsilon^{-{4\over p-1}} \int\limits_{\Omega}u_{ \varepsilon}^2(x)dx&=  \varepsilon^{-{4\over p-1}}  \int\limits_{\Omega}\( U \({x-\xi_\varepsilon\over\varepsilon}\)+\phi_\varepsilon(x)\)^2dx
\\
\nonumber
  &=  \varepsilon^{-{4\over p-1}}\left[  \int\limits_{\Omega}  U ^2\({x-\xi_\varepsilon\over\varepsilon}\)dx+
  \int\limits_{\Omega}
  2U \({x-\xi_\varepsilon\over\varepsilon}\)\phi_\varepsilon(x) dx +
o(\varepsilon )  \right]
 \\
\nonumber
&=  \varepsilon^{-{4\over p-1}+N} \left[ \int\limits_{{\Omega-\xi_\varepsilon\over\varepsilon}} U^2 \(y\) dy+\mathcal O(\varepsilon)\right]
\\
\label{cru1}&= \varepsilon^{-{4\over p-1}+N}\left[   \sigma_0+\mathcal O(\varepsilon)\right]
=\rho\left[\Lambda (\rho)\sigma_0+\mathcal O(\varepsilon)\right]\
 \end{align}
 where the term $\mathcal O(\varepsilon)$   is uniform with respect to $\Lambda=\Lambda(\rho)$   when either $\rho\to+\infty$ or $\rho\to0.$

 Finally, we choose $\Lambda(\rho)$ as in \eqref{la}, when either $\rho\to+\infty$ or $\rho\to0,$ such that
 $$ \Lambda (\rho)\sigma_0+o(1) =1$$
and  by \eqref{cru1} we deduce that $u_\varepsilon$ has the prescribed
$L^2-$norm concluding the proof.
\end{proof}

In the critical case, namely when $p=\frac4N+1$ the situation
is more difficult and we can prove the following result.
 \begin{theorem}\label{main1crit}
 Let  $p=1+\frac4N$ and
 $\xi_0\in\partial\Omega$ be a non-degenerate critical point of the mean
curvature of the boundary $\partial\Omega$ such that $H(\xi_0)\not=0$. Suppose that
\begin{equation}\label{sigma1}
\mathfrak n:=    \int\limits_{\mathbb R^{N-1}}|y'|^2U^2 \(y',0\)dy'-(N-1)
\int\limits_{\mathbb R^N_+} U(y)W_1(y) dy\neq 0
\end{equation}
where $W_1$ is defined in \eqref{pW}.
Then,    there exists $\delta>0$
such that
if  either $\mathfrak n H(\xi_0)>0$ and
$\rho\in\(\sigma_0-\delta,\sigma_0\)$ or
$\mathfrak n H(\xi_0)<0$ and $\rho \in\(\sigma_0,\sigma_0+\delta\)$
(see \eqref{so}),
Problem \eqref{pnl}  with $\varepsilon:= \Lambda_\rho|\rho -\sigma_0|$ has a
solution $(\Lambda_\rho, u_\rho)$ such that $\Lambda_\rho \to {1\over
|H(\xi_0)\mathfrak n|}$  and $u_\rho$ concentrates at the
point $\xi_0$ as $\rho\to\sigma_0.$
 \end{theorem}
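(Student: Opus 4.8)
The starting point is that, at the mass-critical exponent $p=1+\frac4N$, one has $-\frac4{p-1}+N=0$, so the scaling identity $\varepsilon^{-\frac4{p-1}+N}=\Lambda\rho$ used in \eqref{la} degenerates and no longer pins down $\varepsilon$ in terms of $\rho$; correspondingly, the leading term of the rescaled $L^2$-norm of the concentrating solution produced by Theorem~\ref{li-wei} is the \emph{constant} $\sigma_0$ (the $L^2$-mass of $U$ on a half-space, i.e. half of $2\sigma_0$). This forces two things: we can only prescribe masses $\rho$ near $\sigma_0$, and we must expand that $L^2$-norm one further order in $\varepsilon$. The plan is therefore to treat $\varepsilon$ as a free small parameter: for each small $\varepsilon$, Theorem~\ref{li-wei} yields a solution $u_\varepsilon$ of the Neumann problem \eqref{pn} concentrating at $\xi_0$, with the refined expansion \eqref{refined}; I would then look for $\varepsilon$ of the form $\varepsilon=\Lambda|\rho-\sigma_0|$, with $\Lambda$ ranging in a fixed compact neighbourhood of $|H(\xi_0)\mathfrak n|^{-1}$, and adjust $\Lambda$ so that $u_\varepsilon$ has the prescribed mass. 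Note that $\varepsilon\to0$, hence $u_\varepsilon$ concentrates at $\xi_0$, as $\rho\to\sigma_0$.

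The heart of the matter is the expansion
\[
\varepsilon^{-\frac4{p-1}}\int_\Omega u_\varepsilon^2\,dx=\varepsilon^{-N}\int_\Omega u_\varepsilon^2\,dx=\sigma_0-\varepsilon\,H(\xi_0)\,\mathfrak n+o(\varepsilon),
\]
uniformly for $\Lambda$ in the chosen range, with $\mathfrak n$ the quantity in \eqref{sigma1}. To establish it I would plug \eqref{refined} into the integral and rescale $y=(x-\xi_\varepsilon)/\varepsilon$. The zeroth-order term $\int_{(\Omega-\xi_\varepsilon)/\varepsilon}U^2\,dy$ equals $\int_{\R^N_+}U^2=\sigma_0$ up to an $O(\varepsilon)$ error coming from the curvature of $\partial\Omega$: by \eqref{eq:defpsi} and the normalization $\xi_0=0$, $\nu(\xi_0)=(0,\dots,-1)$, the rescaled domain becomes, near the origin, the region lying above the graph $y_N=\tfrac\varepsilon2\sum_j\kappa_j y_j^2+O(\varepsilon^2)$, which (using that $\xi_\varepsilon-\xi_0=o(\varepsilon)$, the radiality of $U$, and $\sum_j\kappa_j=(N-1)H(\xi_0)$) contributes the boundary-layer term $-\tfrac\varepsilon2\,H(\xi_0)\int_{\R^{N-1}}|y'|^2U^2(y',0)\,dy'$. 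The next contribution is the cross term $2\varepsilon\int_{(\Omega-\xi_\varepsilon)/\varepsilon}U\,V_{\xi_0}\,dy=2\varepsilon\int_{\R^N_+}U\,V_{\xi_0}\,dy+o(\varepsilon)$; inserting the explicit formula for $V_{\xi_0}$ from Theorem~\ref{li-wei}, the part built on $\partial_iU\,y_iy_N$ is handled by integration by parts in $y_i$, while the part built on the $W_i$ reduces to a multiple of $(N-1)H(\xi_0)\int_{\R^N_+}UW_1$, since $\int_{\R^N_+}UW_i=\int_{\R^N_+}UW_1$ by the symmetry of the $W_i$ (Remark after Theorem~\ref{li-wei}) and the radiality of $U$. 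Collecting all order-$\varepsilon$ terms yields precisely $-\varepsilon\,H(\xi_0)\,\mathfrak n$. Finally, by \eqref{restoref} we have $\|\psi_\varepsilon\|_{L^2(\Omega)}=\mathcal O(\varepsilon^{\min\{2,p\}+N/2})$ with $\min\{2,p\}>1$, so the terms involving $\psi_\varepsilon$, and those quadratic in $\varepsilon V_{\xi_0}$, are $o(\varepsilon^{1+N})$ and drop out after dividing by $\varepsilon^N$.

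Given this expansion the conclusion follows exactly as in Theorem~\ref{main1}: with $\varepsilon=\Lambda|\rho-\sigma_0|$ one gets
\[
\varepsilon^{-N}\int_\Omega u_\varepsilon^2\,dx-\rho=|\rho-\sigma_0|\,\bigl(\mathrm{sgn}(\sigma_0-\rho)-H(\xi_0)\,\mathfrak n\,\Lambda\bigr)+o(|\rho-\sigma_0|),
\]
uniformly in $\Lambda$. Restricting $\Lambda$ to, say, $\bigl[\tfrac12|H(\xi_0)\mathfrak n|^{-1},\,2|H(\xi_0)\mathfrak n|^{-1}\bigr]$, the bracketed main term changes sign over this interval, for $|\rho-\sigma_0|$ small, exactly when $\mathrm{sgn}(\sigma_0-\rho)=\mathrm{sgn}(H(\xi_0)\mathfrak n)$, i.e. when $\mathfrak n H(\xi_0)>0$ and $\rho\in(\sigma_0-\delta,\sigma_0)$, or $\mathfrak n H(\xi_0)<0$ and $\rho\in(\sigma_0,\sigma_0+\delta)$, for a suitably small $\delta>0$ (also small enough that $\varepsilon<\varepsilon_0$ throughout, so that Theorem~\ref{li-wei} applies). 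Since the branch $\varepsilon\mapsto u_\varepsilon$ can be taken continuous in $\varepsilon$ — as is standard from the Lyapunov-Schmidt construction behind Theorem~\ref{li-wei} — the intermediate value theorem produces $\Lambda=\Lambda_\rho$ solving the mass constraint; dividing the last identity by $|\rho-\sigma_0|$ then forces $\Lambda_\rho\to|H(\xi_0)\mathfrak n|^{-1}$ as $\rho\to\sigma_0$. Taking $u_\rho:=u_\varepsilon$ with $\varepsilon=\Lambda_\rho|\rho-\sigma_0|$ gives the asserted solution of \eqref{pnl}, which also explains the roles of $H(\xi_0)\neq0$, of \eqref{sigma1}, and of the one-sided restriction on $\rho$.

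I expect the main obstacle to be the second-order mass expansion in the central step. It is not enough to know that $u_\varepsilon\approx U((x-\xi_\varepsilon)/\varepsilon)$: one genuinely needs the refined ansatz \eqref{refined}, a careful description of the curved rescaled domain $(\Omega-\xi_\varepsilon)/\varepsilon$, and above all the verification that the a priori unrelated integrals $\int_{\R^{N-1}}|y'|^2U^2(y',0)\,dy'$ and $\int_{\R^N_+}UW_1$ recombine into the single coefficient $H(\xi_0)\mathfrak n$. Checking that $\psi_\varepsilon$ and the quadratic corrections are negligible is routine once \eqref{restoref} is available, but identifying this coefficient — and the fact that it need not vanish, which is precisely what the extra hypothesis \eqref{sigma1} is there to guarantee — is the technically delicate point.
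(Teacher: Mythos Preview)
Your proposal is correct and follows essentially the same route as the paper: fix $\varepsilon=\Lambda|\rho-\sigma_0|$ with $\Lambda$ in a compact interval around $|H(\xi_0)\mathfrak n|^{-1}$, use the refined ansatz \eqref{refined} to expand $\varepsilon^{-N}\int_\Omega u_\varepsilon^2$ as $\sigma_0-\varepsilon H(\xi_0)\mathfrak n+o(\varepsilon)$ via the boundary-layer correction to $\int U^2$ and the cross term $2\varepsilon\int UV_{\xi_0}$, and then adjust $\Lambda$ by the intermediate value theorem. The only cosmetic difference is that for the term $\int_{\R^N_+}U\,\partial_iU\,y_iy_N$ the paper first uses radiality to rewrite it as $\int_{\R^N_+}U\,\partial_NU\,y_i^2$ and then integrates by parts in $y_N$ (which directly produces the boundary integral $\int_{\R^{N-1}}|y'|^2U^2(y',0)\,dy'$ appearing in $\mathfrak n$), whereas you propose integrating by parts in $y_i$; both are valid and equivalent.
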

  \begin{proof}
In this case, let us fix
\begin{equation}\label{la2}
\varepsilon=\Lambda\delta\ \hbox{where}\ \delta:=|\rho -\sigma_0| \ \hbox{and}\ \Lambda=\Lambda(\delta)\in\left[
\frac1{2|H(\xi_0)\mathfrak n|},\frac 2{|H(\xi_0)\mathfrak n|}\right],
\end{equation}
where $\mathfrak n$ is defined in \eqref{sigma1}.
As in the proof of Theorem \ref{main1}
we have to choose the free parameter $\Lambda=\Lambda(\delta)$ such that   the $L^2-$norm of the solution is the  prescribed value $\rho$.
But, differently from the case $p\neq 1+\frac4N$, here, we need
 a more refined profile of the solution $u_\varepsilon$, namely we
 have to take into account the first order  $\varepsilon V_{\xi_0}\({x-\xi_\varepsilon\over\varepsilon}\)$ of the reimander term (see \eqref{refined}).
Indeed,  by \eqref{refined} and \eqref{restoref} we get
 \begin{equation}\label{cru2}
 \begin{aligned}
 \varepsilon^{-{4\over p-1}} \int\limits_{\Omega}u_{ \varepsilon}^2(x)dx&=   \int\limits_{{\Omega-\xi_\varepsilon\over\varepsilon}} U^2 \(y\) dy+2\varepsilon\int\limits_{{\Omega-\xi_\varepsilon\over\varepsilon}} U  \(y\) V_{\xi_0}(y)dy+\mathcal O\(\varepsilon^{\min\{2,p\}}\)
 \end{aligned}
 \end{equation}
 where the term $\mathcal O\left(\varepsilon^{\min\{2,p\}}\right)$   is uniform with respect to $\Lambda=\Lambda(\delta)$.\\
In order to compute the expansion of the right hand side of \eqref{cru2}
let us define
$$
B^+ :=\left\{x\in\mathbb R^N_+\ :\  |x|<\eta\right\}\ \hbox{and}\ \Sigma:=\left\{(x', x_N)\ :\ 0<x_N<\psi(x')\,\,: |x'|<\eta\right\}
$$
where the function $\psi$ given in \eqref{eq:defpsi}.
Rescaling $x=\varepsilon y+\xi_\varepsilon$ one sends
$B^+$ and $\Sigma$ to
$B^+_\varepsilon :=\left\{y\in\mathbb R^N_+\ :\ |y+\frac1\varepsilon{\xi_\varepsilon}|<\frac\eta\varepsilon\right\}$
and
\[
\Sigma_\varepsilon:=\left\{(y', y_N)\ :\ -\frac1 \varepsilon{{\xi_\varepsilon}_N}<y_N<\frac1\varepsilon\psi\(\varepsilon y'+\xi'_\varepsilon\)-\frac1\varepsilon{\xi_\varepsilon}_N
,\ |y'+\frac1\varepsilon\xi'_\varepsilon|<\frac \eta\varepsilon\right\}\subset
\frac{\Omega-\xi_\varepsilon}{\varepsilon}.
\]
Estimating the first term on the right hand side of \eqref{cru2}
and using  the decay properties of $U$ (see \eqref{groundstate})
one obtains
\begin{equation}\label{cru3}
\begin{aligned}
\int\limits_{ \Omega-\xi_\varepsilon\over\varepsilon}  U^2 \(y\) dy& = \int\limits_{B^+_\varepsilon}  U^2 \(y\) dy-
\int\limits_{\Sigma_\varepsilon} U^2 \(y\)  dy+\int\limits_{{ \Omega-\xi_\varepsilon\over\varepsilon} \setminus B^+_\varepsilon}  U^2 \(y\) dy\\
&= \int_{\mathbb R^N_+}  U^2 \(y\)  dy-
\int\limits_{\Sigma_\varepsilon} U^2 \(y\) dy+\mathcal O\left(\int\limits_{\mathbb R^N\setminus B^+_\varepsilon}  U^2 \(y\) dy\right) \\
&=\sigma_0 - \frac12 H(\xi_0)\varepsilon\int\limits_{\mathbb R^{N-1}}|y'|^2U^2 \(y',0\)dy'+o\(\varepsilon\).\end{aligned}
\end{equation}
Indeed,  \eqref{eq:defpsi}, standard computations together with the fact that
$\frac{\xi_\varepsilon}{\varepsilon}=o(1)$ ( as $\varepsilon\to0 $ (see \eqref{points} with $\xi_0=0$) show that
$$\begin{aligned}
\int\limits_{\Sigma_\varepsilon} U^2 \(y\) dy&=\int_{\left\{|y'+\frac1\varepsilon\xi'_\varepsilon|<\frac \eta\varepsilon\right\}}dy'\int_{-\frac1 \varepsilon{{\xi_\varepsilon}_N}}^{\frac1\varepsilon\psi\(\varepsilon y'+\xi'_\varepsilon\)-\frac1\varepsilon{\xi_\varepsilon}_N}U^2 \(y',y_N\) dy_N\\
&=\int_{\left\{|y'+\frac1\varepsilon\xi'_\varepsilon|<\frac \eta\varepsilon\right\}}\frac1\varepsilon\psi\(\varepsilon y'+\xi'_\varepsilon\)U^2 \(y',0\)dy'+
o\(\varepsilon\)\\
&=\frac12 H(\xi_0)
\varepsilon\int\limits_{\mathbb R^{N-1}}|y'|^2U^2 \(y',0\)dy'+o\(\varepsilon\).
\end{aligned}$$
With respect to the second term on the right hand side of \eqref{cru2}, we have
\begin{equation}\label{cru31}
\begin{aligned}
2\int\limits_{{\Omega-\xi_\varepsilon\over\varepsilon}} U  \(y\) V_{\xi_0}(y)dy
=&2\int\limits_{\mathbb R^N_+} U  \(y\) V_{\xi_0}(y)dy+o(1)
\\
=&  \sum\limits_{i=1}^{N-1}\kappa_i(\xi_0) \int\limits_{\mathbb R^N_+} U  \(y\) \left(
{\partial U\over\partial y_i}(y)y_iy_N+W_i(y)\right)dy+o(1)
\\
=& -\frac 1{2 }H(\xi_0)\int\limits_{\mathbb R^{N-1}}
 U^2(y',0) |y'|^2 dy'
\\
&+\sum\limits_{i=1}^{N-1}\kappa_i(\xi_0)
\int\limits_{\mathbb R^N_+} U(y)W_1(y) dy +o(1),
 \end{aligned}
 \end{equation}
since
$$\begin{aligned}
 \int\limits_{\mathbb R^N_+} U  \(y\)
{\partial U\over\partial y_i}(y)y_iy_N dy&=\int\limits_{\mathbb R^N_+} U  \(y\)
{\partial U\over\partial y_N}(y)y_i^2dy=\frac 12\int\limits_{\mathbb R^N_+}
{\partial U^2\over\partial y_N}(y)y_i^2dy\\
& =\frac 12\int\limits_{\mathbb R^N_+}
{\partial \over\partial y_N}\(U^2(y) y_i^2\)dy =-\frac 12\int\limits_{\mathbb R^{N-1}}
 U^2(y',0) y_i^2 dy'\\
& =-\frac 1{2(N-1)}\int\limits_{\mathbb R^{N-1}}
 U^2(y',0) |y'|^2 dy'.
 \end{aligned}
$$
Combining \eqref{cru2}, \eqref{cru3} and \eqref{cru31} together with  \eqref{sigma1} and the choice of $\varepsilon$ in \eqref{la2}, we get
\begin{equation}\label{cru4}
\begin{aligned}
\varepsilon^{-{4\over p-1}} \int\limits_{\Omega}u_{ \varepsilon}^2(x)dx&=  \sigma_0- H(\xi_0)\mathfrak n\varepsilon+o\(\varepsilon\)
=\sigma_0- H(\xi_0)\mathfrak n\Lambda\delta+o\(\delta\)\\
 &=\rho \pm\delta - H(\xi_0)\mathfrak n\Lambda\delta+o\(\delta\)
\end{aligned}
\end{equation}
where the term $o(\cdot)$   is uniform with respect to $\Lambda=\Lambda(\delta)$.   \\
Finally, it is clear that it is possible to choose $\Lambda(\delta)$ as in \eqref{la}, when   $\delta\to0,$ such that
 $$
 1 - H(\xi_0)\mathfrak n\Lambda(\delta)+o\(1\) =0\ \hbox{or}\   -1 - H(\xi_0)\mathfrak n\Lambda(\delta)+o\(1\) =0
 $$
 (in particular $H(\xi_0)\mathfrak n>0$ in the first case and $H(\xi_0)\mathfrak n<0$ in the second case)
and  by \eqref{cru4} we deduce that $u_\varepsilon$ has the prescribed $L^2-$norm. That concludes the proof.
\end{proof}

 \begin{remark}
 We point out that (i) and (ii) of Theorem \ref{main1} hold true when $\xi_0$ is a $C^1-$stable critical point of the mean curvature according the definition given by Li in \cite{yyl}.
 The non-degeneracy assumption is used in proving (iii) of Theorem \ref{main1}, since  it ensures the estimate \eqref{points} which  turns to be crucial in the second order expansion of the $L^2-$norm of the solution.
 \\
It is useful to recall that Micheletti and Pistoia in \cite{mp} proved that for
generic  domains $\Omega$ the mean curvature of the boundary is a Morse function, i.e. all its critical points are non-degenerate.
 \end{remark}

\begin{remark}\label{rm:morsebordo}
We point out that  if $\xi_0$ is a non-degenerate critical  point of the mean curvature  of the boundary whose index Morse  is $m(\xi_0)$
then by Theorem 4.6  in \cite{bash} we deduce that
the solution   concentrating at a $\xi_0$ is non-degenerate and has Morse index $1+m(\xi_0)$.
In particular, the solution  concentrating at a non-degenerate minimum point of the mean curvature  of the boundary is non-degenerate and has Morse index 1.
\end{remark}

\subsection{Interior concentration}\label{sub:int}

 In this subsection we will find normalized solutions concentrating at an
 interior point of the bounded domain $\Omega$. Our analysis is
 based on well known results proved  by  Gui, Ni and Wei  in \cite{gw,nw,w1,w2,w3},  concerning the existence of solutions to
 the  following Dirichlet and Neumann problem
  \begin{equation}\label{pdn}
  \begin{cases}
  -\varepsilon^2\Delta u+  u = u^p &\hbox{in}\ \Omega,
  \\
u>0 &\hbox{in}\ \Omega,
\\
u=0  \;\hbox{or}\; \partial_\nu u=0 &\hbox{on}\ \partial\Omega
\end{cases}
\end{equation}
 as $\varepsilon$ is small enough.\\
In order to summarize the afore mentioned results, let us first state the
following proposition (see Lemma 4.3 and 4.4 in \cite{nw} and Section 3 in
\cite{w4}).
 \begin{proposition}\label{visco}
  Let $U_{\varepsilon,\xi}(x):=U\(\frac{x-\xi }\varepsilon\)$ for $x,\xi\in\Omega $ and  let $\varphi_{\varepsilon,\xi}$ be the solution
 to the problem
\begin{equation}\label{prob:fi}
 \begin{cases}
 -\varepsilon^2\Delta \varphi_{\varepsilon,\xi}+  \varphi_{\varepsilon,\xi} =0  &\hbox{in}\ \Omega,\
\\
\varphi_{\varepsilon,\xi}=U_{\varepsilon,\xi}\ \hbox{or}\ \partial_\nu
\varphi_{\varepsilon,\xi}=\partial_\nu U_{\varepsilon,\xi} & \hbox{on}\ \partial\Omega.
\end{cases}
\end{equation}
Set
$$
\psi_{\varepsilon }(\xi):=-\varepsilon\ln\(\varphi_{\varepsilon,\xi}(\xi)\)\ \hbox{
in case of  Dirichlet boundary conditions}
$$
 or
 $$
\psi_{\varepsilon }(\xi):=-\varepsilon\ln\(-\varphi_{\varepsilon,\xi}(\xi)\)\ \hbox{
in case of  Neumann boundary conditions.}$$
Then
$$\lim\limits_{\varepsilon\to0}\psi_{\varepsilon }(\xi)=2d_{\partial\Omega}(\xi)\ \hbox{uniformly in}\ \Omega.$$
where
$d_{\partial\Omega}(\xi):= {\rm dist}(\xi,\partial\Omega).$
\end{proposition}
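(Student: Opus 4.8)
The plan, following \cite{nw,w4}, is to establish the sharper estimate
\[
|\varphi_{\varepsilon,\xi}(\xi)|=\exp\!\left(-\frac{2d_{\partial\Omega}(\xi)+o(1)}{\varepsilon}\right)\qquad\text{as }\varepsilon\to0,
\]
uniformly for $\xi\in\Omega$, where $o(1)$ denotes a quantity tending to $0$ as $\varepsilon\to0$ uniformly in $\xi$ (such uniformity is available since $\partial\Omega$ is smooth and compact); the stated limit then follows by taking $-\varepsilon\ln$. As a preliminary step, one notes that $\varphi_{\varepsilon,\xi}$ solves the \emph{linear} equation $-\varepsilon^2\Delta\varphi_{\varepsilon,\xi}+\varphi_{\varepsilon,\xi}=0$ in $\Omega$; a maximum principle and Hopf lemma argument, using the positivity of the Dirichlet datum $U_{\varepsilon,\xi}$ and, in the Neumann case, the inequality $\partial_\nu U_{\varepsilon,\xi}<0$ near the nearest boundary point (a consequence of the radial monotonicity of $U$, see \eqref{groundstate}), shows that $\varphi_{\varepsilon,\xi}>0$ in the Dirichlet case and $\varphi_{\varepsilon,\xi}<0$ in the Neumann case, so that $\psi_\varepsilon$ is well defined. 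In what follows we abbreviate $d:=d_{\partial\Omega}(\xi)$.

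For the upper bound $|\varphi_{\varepsilon,\xi}(\xi)|\le\exp(-(2d-o(1))/\varepsilon)$ I would represent $\varphi_{\varepsilon,\xi}$ through the Poisson kernel of $-\varepsilon^2\Delta+1$ on $\Omega$ (respectively through the Neumann function and a single layer potential in the Neumann case). Standard estimates for this kernel, obtained by scaling and comparison with the fundamental solution of $-\Delta+1$ in $\R^N$ (which decays like $e^{-|z|}$ up to algebraic factors), give the bound $C\varepsilon^{-\theta}e^{-|x-y|/\varepsilon}$ for a fixed $\theta>0$, while the boundary data are controlled by $U_{\varepsilon,\xi}(y)=U(|y-\xi|/\varepsilon)\le Ce^{-|y-\xi|/\varepsilon}$ and $|\partial_\nu U_{\varepsilon,\xi}(y)|\le C\varepsilon^{-1}e^{-|y-\xi|/\varepsilon}$, again by \eqref{groundstate}. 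Since $|\xi-y|\ge d$ for $y\in\partial\Omega$, this yields
\[
|\varphi_{\varepsilon,\xi}(\xi)|\le C\varepsilon^{-\theta}\int_{\partial\Omega}e^{-2|\xi-y|/\varepsilon}\,d\sigma(y)\le C\varepsilon^{-\theta}|\partial\Omega|\,e^{-2d/\varepsilon},
\]
and taking $-\varepsilon\ln$ absorbs the algebraic prefactor into $o(1)$.

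For the matching lower bound I would use a reflection argument. Let $y_0\in\partial\Omega$ realise the distance, $|\xi-y_0|=d$, and let $\xi^\ast$ be the mirror image of $\xi$ in the tangent hyperplane to $\partial\Omega$ at $y_0$, so that $\xi^\ast\notin\overline\Omega$ and $|\xi-\xi^\ast|=2d$. Using that $\partial\Omega$ is quadratically flat at $y_0$ (cf. \eqref{eq:defpsi}) together with the decay of $U$, one checks $U_{\varepsilon,\xi^\ast}\le e^{o(1)/\varepsilon}U_{\varepsilon,\xi}$ on $\partial\Omega$ near $y_0$ (and the analogous comparison of conormal derivatives in the Neumann case). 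After multiplying $U_{\varepsilon,\xi^\ast}$ by a cut-off supported near $y_0$ — which perturbs it by no more than a factor $e^{o(1)/\varepsilon}$ — one obtains a subsolution of the problem \eqref{prob:fi} solved by $|\varphi_{\varepsilon,\xi}|$, since its nonlinear term $U_{\varepsilon,\xi^\ast}^p$ is super-exponentially small inside $\Omega$ (as $\xi^\ast$ stays at positive distance from $\Omega$ in the region under consideration). The comparison principle then gives $|\varphi_{\varepsilon,\xi}(\xi)|\ge e^{-o(1)/\varepsilon}U(2d/\varepsilon)\ge\exp(-(2d+o(1))/\varepsilon)$, once more by \eqref{groundstate}.

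Combining the two bounds and passing to $-\varepsilon\ln|\varphi_{\varepsilon,\xi}(\xi)|$ gives $\psi_\varepsilon(\xi)\to2d_{\partial\Omega}(\xi)$, uniformly in $\Omega$. The main obstacle, and the point deserving most care, is the uniformity of all these estimates in $\xi$ — in particular as $\xi$ approaches $\partial\Omega$, where $d\to0$ and the reflected point $\xi^\ast$ degenerates onto $y_0$ — and the bookkeeping needed to pass between Dirichlet and Neumann conditions; this is precisely the content of \cite{gw,nw,w1,w2,w3,w4}.
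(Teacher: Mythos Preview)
The paper does not prove this proposition; it is quoted from the literature (Lemma~4.3--4.4 in \cite{nw} and Section~3 in \cite{w4}), so your sketch is being measured against those references rather than against anything written in the paper itself.

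Your upper bound via the Poisson/Green representation is correct in spirit and matches the approach of the cited works; compare Remark~\ref{wei_correction1}, where the boundary-integral formula
\[
\varphi_{\varepsilon,\xi}(x)=\pm\bigl(c_N+o(1)\bigr)\int_{\partial\Omega}e^{-\frac{|z-\xi|+|z-x|}{\varepsilon}}|z-\xi|^{-\frac{N-1}{2}}|z-x|^{-\frac{N-1}{2}}\frac{\langle z-x,\nu\rangle}{|z-x|}\,dz
\]
is recorded. Evaluating at $x=\xi$ and applying Laplace's method (the phase $2|z-\xi|$ is minimised at the nearest boundary point) gives upper \emph{and} lower bounds simultaneously, which is how \cite{nw,w4} actually proceed.

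Your reflection argument for the lower bound, however, has a sign error. The function $U_{\varepsilon,\xi^\ast}$ satisfies
\[
-\varepsilon^2\Delta U_{\varepsilon,\xi^\ast}+U_{\varepsilon,\xi^\ast}=U_{\varepsilon,\xi^\ast}^p\ge0,
\]
so it is a \emph{supersolution} of the linear equation in \eqref{prob:fi}, not a subsolution; the comparison principle therefore yields an upper bound on $|\varphi_{\varepsilon,\xi}|$, not a lower one. That $U_{\varepsilon,\xi^\ast}^p$ is super-exponentially small does not flip the sign --- a small but positive right-hand side still produces a supersolution. A barrier that does work is the fundamental solution of $-\varepsilon^2\Delta+1$ centred at $\xi^\ast$: it solves the homogeneous equation exactly in $\Omega$ (since $\xi^\ast\notin\overline\Omega$) and has the same exponential decay $e^{-|x-\xi^\ast|/\varepsilon}$ as $U$, so the comparison goes through. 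Alternatively, and more in line with the references, one obtains the lower bound directly from the representation formula above.
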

Now, we can state the existence result  (see Lemma 2.1 of \cite{gp})
 \begin{theorem}\label{www} Let  $\xi_0\in\Omega$ be the maximum point of
 the distance function from the boundary $\partial\Omega.$
There exists $\varepsilon_0>0$ such that for any $\varepsilon\in(0,\varepsilon _0)$
there exists a solution $u_\varepsilon$ to \eqref{pdn}
 which concentrates at the point $\xi_0$   as $\varepsilon\to0.$ More precisely,
\begin{equation}\label{ue}u_\varepsilon(x)= U\(\frac{x-\xi_\varepsilon}\varepsilon\)-
\varphi_{\varepsilon,\xi_\varepsilon}(x)+\phi_{\varepsilon,\xi_\varepsilon} (x)
\end{equation}
 where
\begin{equation}\label{pointse}
\xi_\varepsilon\to \xi_0 \ \hbox{as}\
\varepsilon\to0\ \hbox{with}\ d_{\partial\Omega}
(\xi_0)=\max\limits_{\xi\in\Omega}d_{\partial\Omega}(\xi)
\end{equation}
and
\begin{equation}\label{restoe} \|\phi_{\varepsilon,\xi_\varepsilon}\|
_{H^1_\varepsilon(\Omega)}:=\(\int\limits_{\Omega}\(\varepsilon^2|\nabla
\phi_{\varepsilon,\xi_\varepsilon}|^2+ \phi_{\varepsilon,\xi_\varepsilon}^2\)dx\)^{1/2}=
 \mathcal O\(\varepsilon^{N\over2}|\varphi_{\varepsilon,\xi_\varepsilon}(\xi_\varepsilon)|^{\min\{1,p/2\}}\).
\end{equation}
 \end{theorem}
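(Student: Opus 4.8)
The plan is to prove Theorem \ref{www} by the classical finite–dimensional Lyapunov--Schmidt reduction for singularly perturbed Neumann/Dirichlet problems, as developed in \cite{nw,gw,w1,w2,w3,w4,gp}; I will only sketch the scheme, since this is essentially a known result. First I would work in $H^1_\varepsilon(\Omega)$ ($=H^1_0(\Omega)$ for Dirichlet, $=H^1(\Omega)$ for Neumann) with the norm in \eqref{restoe}, and take as approximate solution the projected bump $PU_{\varepsilon,\xi}:=U_{\varepsilon,\xi}-\varphi_{\varepsilon,\xi}$, with $\varphi_{\varepsilon,\xi}$ as in Proposition \ref{visco}, so that the boundary condition is met exactly. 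Since $-\varepsilon^2\Delta U_{\varepsilon,\xi}+U_{\varepsilon,\xi}=U_{\varepsilon,\xi}^p$ and $-\varepsilon^2\Delta\varphi_{\varepsilon,\xi}+\varphi_{\varepsilon,\xi}=0$, the equation residual of $PU_{\varepsilon,\xi}$ equals $U_{\varepsilon,\xi}^p-(U_{\varepsilon,\xi}-\varphi_{\varepsilon,\xi})^p$, which is supported (up to exponentially small tails) in the region where $\varphi_{\varepsilon,\xi}$ is not negligible; linearising $(a-b)^p-a^p$ where $b\ll a$ and bounding it by $|b|^p$ where $a\sim b$, one obtains a residual of size $\mathcal O\big(\varepsilon^{N/2}|\varphi_{\varepsilon,\xi}(\xi)|^{\min\{1,p/2\}}\big)$ in the relevant dual norm — this is where the exponent $\min\{1,p/2\}$ in \eqref{restoe} comes from.

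Next I would establish that the linearised operator $L_{\varepsilon,\xi}:=-\varepsilon^2\Delta+1-p(PU_{\varepsilon,\xi})^{p-1}$, restricted to the $L^2_\varepsilon$-orthogonal complement of the approximate kernel $K_{\varepsilon,\xi}:=\mathrm{span}\{\partial_{\xi_j}PU_{\varepsilon,\xi}:j=1,\dots,N\}$, has inverse bounded uniformly in $\varepsilon$ and in $\xi$ near $\xi_0$; this rests on the non-degeneracy of the ground state, $\ker\big(-\Delta+1-pU^{p-1}\big)=\mathrm{span}\{\partial_iU\}$ in $H^1(\mathbb{R}^N)$, together with the decay \eqref{groundstate}. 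A contraction mapping then yields, for each $\xi$, a unique small $\phi_{\varepsilon,\xi}\in K_{\varepsilon,\xi}^\perp$, depending smoothly on $\xi$, with $\|\phi_{\varepsilon,\xi}\|_{H^1_\varepsilon}=\mathcal O\big(\varepsilon^{N/2}|\varphi_{\varepsilon,\xi}(\xi)|^{\min\{1,p/2\}}\big)$, such that $u_{\varepsilon,\xi}:=PU_{\varepsilon,\xi}+\phi_{\varepsilon,\xi}$ has residual $-\varepsilon^2\Delta u_{\varepsilon,\xi}+u_{\varepsilon,\xi}-(u_{\varepsilon,\xi})^p\in K_{\varepsilon,\xi}$. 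Then $u_\varepsilon:=u_{\varepsilon,\xi_\varepsilon}$ will solve \eqref{pdn} (positivity being recovered from the maximum principle, after working with the nonlinearity $u\mapsto(u_+)^p$) as soon as $\xi_\varepsilon$ is a critical point of the reduced functional $J_\varepsilon(\xi):=I_\varepsilon(u_{\varepsilon,\xi})$, where $I_\varepsilon$ is the energy.

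The remaining task — and the hard part — is to produce such a critical point near $\xi_0$. I would expand $J_\varepsilon(\xi)=\varepsilon^N\big[c_0+c_1\varphi_{\varepsilon,\xi}(\xi)(1+o(1))\big]$ as $\varepsilon\to0$, uniformly for $\xi$ near $\xi_0$, with $c_0,c_1>0$ ($c_0$ being the energy of $U$ at $\varepsilon=1$ and $c_1$ an explicit positive constant coming from $\int U_{\varepsilon,\xi}^p\varphi_{\varepsilon,\xi}$); here the bound $\|\phi_{\varepsilon,\xi}\|^2=o\big(\varepsilon^N|\varphi_{\varepsilon,\xi}(\xi)|\big)$, valid for every $p>1$, guarantees that the correction due to $\phi_{\varepsilon,\xi}$ is of lower order. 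Now $\varphi_{\varepsilon,\xi}(\xi)>0$ in the Dirichlet case and $<0$ in the Neumann case, while Proposition \ref{visco} gives $|\varphi_{\varepsilon,\xi}(\xi)|=\exp\big(-(2d_{\partial\Omega}(\xi)+o(1))/\varepsilon\big)$ uniformly; hence on a fixed small ball $\overline{B_r(\xi_0)}$ the map $\xi\mapsto J_\varepsilon(\xi)$ is, to leading order, strictly monotone in $d_{\partial\Omega}(\xi)$ — decreasing in the Dirichlet case, increasing in the Neumann one. Since $\xi_0$ is the unique strict maximum of $d_{\partial\Omega}$, there is $\delta_r>0$ with $d_{\partial\Omega}(\xi_0)-d_{\partial\Omega}(\xi)\ge\delta_r$ on $\partial B_r(\xi_0)$, so the extremum of $J_\varepsilon$ over $\overline{B_r(\xi_0)}$ — a minimum (Dirichlet), a maximum (Neumann) — is attained at an interior point $\xi_\varepsilon$, which is therefore a critical point of $J_\varepsilon$ with $\xi_\varepsilon\to\xi_0$; then \eqref{ue}, \eqref{pointse} and \eqref{restoe} follow.

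I expect the principal obstacle to be exactly this last step: obtaining the energy expansion with a non-vanishing $c_1$ and, above all, coping with the fact that — unlike the Dirichlet case, where the interior spike is the global least-energy solution — in the Neumann case the interior spike is \emph{not} a global minimiser (it is a max-type critical point), which forces the reduction to be localised in a neighbourhood of $\xi_0$ and the strict maximality of $d_{\partial\Omega}$ at $\xi_0$ to be used. Throughout, the technical crux is keeping all estimates, in particular the exponentially small ones governed by $\varphi_{\varepsilon,\xi}(\xi)$ through Proposition \ref{visco}, uniform with respect to $\xi$.
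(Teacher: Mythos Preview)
The paper does not actually prove Theorem~\ref{www}: it is stated as a known result, with the attribution ``(see Lemma~2.1 of \cite{gp})'' and the preceding discussion of \cite{gw,nw,w1,w2,w3}. Your sketch follows precisely the Lyapunov--Schmidt scheme of those references (projected ansatz $PU_{\varepsilon,\xi}=U_{\varepsilon,\xi}-\varphi_{\varepsilon,\xi}$, uniform invertibility of the linearised operator on $K_{\varepsilon,\xi}^\perp$, contraction for $\phi_{\varepsilon,\xi}$, and reduction to a finite-dimensional functional governed by $\varphi_{\varepsilon,\xi}(\xi)$ via Proposition~\ref{visco}), so it is in line with what the paper is quoting.

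One small caveat: in your final step you write ``since $\xi_0$ is the unique strict maximum of $d_{\partial\Omega}$'', but the statement only says $\xi_0$ is \emph{the} maximum point, and in general the distance function can have a non-isolated maximum set. The references handle this either by assuming $\xi_0$ is isolated (as implicitly here) or by invoking the notion of $C^1$-stable critical point from \cite{gp}; you should make explicit which hypothesis you are using, otherwise the argument ``there is $\delta_r>0$ with $d_{\partial\Omega}(\xi_0)-d_{\partial\Omega}(\xi)\ge\delta_r$ on $\partial B_r(\xi_0)$'' may fail.
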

From the above result we obtain the asymptotic behaviour of
$\phi_{\varepsilon,\xi_\varepsilon}$ in dependence on $\varphi_{\varepsilon,\xi}$, whereas
the following Lemma gives an analogous first information on  $\varphi_{\varepsilon,\xi}$; note that differently from the case of boundary concentration,
here $\varphi_{\varepsilon,\xi}$ decays exponentially as $\epsilon\to 0$.
  \begin{lemma}\label{fi}
 For any $\delta>0$ there exist $\varepsilon_0>0$, $\eta>0$ and $C>0$ such that for any $\varepsilon\in(0,\varepsilon_0)$ and $\xi\in\Omega$ such that $d_{\partial\Omega}(\xi)\ge\delta$ it holds true
 $$\| \varphi_{\varepsilon,\xi}\|_{L^\infty(\Omega)}\le C e^{-{d_{\partial\Omega}(\xi)\over\varepsilon}}.
 $$
 \end{lemma}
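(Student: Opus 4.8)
The plan is to establish the bound purely by the maximum principle, treating the two boundary conditions separately; the key fact is that the operator $L_\varepsilon:=-\varepsilon^2\Delta+1$ satisfies the strong maximum principle and Hopf's lemma. As a preliminary observation, the decay stated in \eqref{groundstate}, together with the boundedness of $U$ and $U'$ near the origin, yields a constant $C_0=C_0(N,p)>0$ such that $U(r)+|U'(r)|\le C_0e^{-r}$ for every $r\ge 0$. Writing $d:=d_{\partial\Omega}(\xi)$ and using that $U$ is radially decreasing, for every $x\in\partial\Omega$ we have $|x-\xi|\ge d$, hence
\[
0\le U_{\varepsilon,\xi}(x)=U\!\left(\tfrac{|x-\xi|}{\varepsilon}\right)\le C_0\,e^{-d/\varepsilon},
\qquad
\bigl|\partial_\nu U_{\varepsilon,\xi}(x)\bigr|\le\bigl|\nabla U_{\varepsilon,\xi}(x)\bigr|=\tfrac1\varepsilon\Bigl|U'\!\left(\tfrac{|x-\xi|}{\varepsilon}\right)\Bigr|\le\tfrac{C_0}{\varepsilon}\,e^{-d/\varepsilon}.
\]
In the Dirichlet case this already concludes the argument: since $L_\varepsilon\varphi_{\varepsilon,\xi}=0$ in $\Omega$ and $\varphi_{\varepsilon,\xi}=U_{\varepsilon,\xi}$ on $\partial\Omega$, the $L^\infty$ maximum principle for $L_\varepsilon$ gives $\|\varphi_{\varepsilon,\xi}\|_{L^\infty(\Omega)}\le\|\varphi_{\varepsilon,\xi}\|_{L^\infty(\partial\Omega)}=\|U_{\varepsilon,\xi}\|_{L^\infty(\partial\Omega)}\le C_0\,e^{-d/\varepsilon}$.

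The Neumann case is the substantive one: there the data is prescribed on the normal derivative, so a direct comparison with $U_{\varepsilon,\xi}$ is not available, and the obvious supersolution $e^{-|x-\xi|/\varepsilon}$ of $L_\varepsilon$ is useless, since its outer normal derivative is negative at the point of $\partial\Omega$ closest to $\xi$ and so cannot dominate $|\partial_\nu U_{\varepsilon,\xi}|$ there. Instead I would use a barrier built from the distance to the boundary. Fix $\beta\in(0,1)$ and choose $\rho\in C^\infty(\overline\Omega)$ coinciding with $d_{\partial\Omega}$ in a collar neighbourhood of $\partial\Omega$, with $\rho\ge 0$, $|\nabla\rho|\le 1$ on $\overline\Omega$ and $\rho$ bounded (one simply caps off $d_{\partial\Omega}$ outside the collar, where it is smooth anyway). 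Set $Y:=e^{-\beta\rho/\varepsilon}$; then $0<Y\le 1$ on $\overline\Omega$, on $\partial\Omega$ one has $\partial_\nu Y=\beta/\varepsilon$ (as $\rho=0$, $\partial_\nu\rho=-1$ there), and a direct computation gives, in $\Omega$,
\[
L_\varepsilon Y=\bigl[(1-\beta^2|\nabla\rho|^2)+\varepsilon\beta\,\Delta\rho\bigr]\,e^{-\beta\rho/\varepsilon}\ge\tfrac{1-\beta^2}{2}\,e^{-\beta\rho/\varepsilon}>0
\]
provided $\varepsilon\le\varepsilon_0$ with $\varepsilon_0\,\beta\,\|\Delta\rho\|_{L^\infty(\Omega)}\le\tfrac{1-\beta^2}{2}$. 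Now let $Z:=\tfrac{C_0}{\beta}\,e^{-d/\varepsilon}\,Y$. Then $L_\varepsilon Z\ge 0$ in $\Omega$, $0\le Z\le\tfrac{C_0}{\beta}\,e^{-d/\varepsilon}$ in $\Omega$, and $\partial_\nu Z=\tfrac{C_0}{\varepsilon}\,e^{-d/\varepsilon}\ge|\partial_\nu U_{\varepsilon,\xi}|=|\partial_\nu\varphi_{\varepsilon,\xi}|$ on $\partial\Omega$, so that $Z\pm\varphi_{\varepsilon,\xi}$ both satisfy $L_\varepsilon(Z\pm\varphi_{\varepsilon,\xi})\ge 0$ in $\Omega$ and $\partial_\nu(Z\pm\varphi_{\varepsilon,\xi})\ge 0$ on $\partial\Omega$. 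The comparison principle for $L_\varepsilon$ with Neumann conditions --- a supersolution with nonnegative Neumann data is nonnegative, which follows from the strong maximum principle and Hopf's lemma --- then yields $Z\pm\varphi_{\varepsilon,\xi}\ge 0$, that is $|\varphi_{\varepsilon,\xi}|\le Z\le\tfrac{C_0}{\beta}\,e^{-d/\varepsilon}$ in $\Omega$. This proves the statement with $C=C_0/\beta$.

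The only genuinely delicate point is the choice of barrier in the Neumann case: one needs an $L_\varepsilon$-supersolution whose normal derivative is of order $\varepsilon^{-1}$ --- exactly the size of $\partial_\nu U_{\varepsilon,\xi}$ --- while its sup norm stays of order $1$, and $Y=e^{-\beta\,d_{\partial\Omega}(x)/\varepsilon}$ does precisely this. The remaining ingredients (the exponential decay of $U$ from \eqref{groundstate}, the $L^\infty$ maximum principle for $L_\varepsilon$, and the harmless globalization of $d_{\partial\Omega}$ past the collar, where it is exponentially irrelevant) are routine. Finally, I note that the hypothesis $d_{\partial\Omega}(\xi)\ge\delta$ is not used in an essential way above; it serves only to make the bound uniform over the relevant range of $\xi$ and to fix $\varepsilon_0$ (and $\eta$) once and for all.
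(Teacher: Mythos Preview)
Your proof is correct and takes a genuinely different route from the paper's. The paper relies on the integral representation formula for $\varphi_{\varepsilon,\xi}$ coming from the Green's function of $-\varepsilon^2\Delta+1$ (their Remark~\ref{wei_correction1}): from
\[
|\varphi_{\varepsilon,\xi}(x)|\le C\int_{\partial\Omega}e^{-\frac{|z-\xi|+|z-x|}{\varepsilon}}|z-\xi|^{-\frac{N-1}{2}}|z-x|^{-\frac{N-1}{2}}\,dz
\]
they extract the factor $e^{-d_{\partial\Omega}(\xi)/\varepsilon}(d_{\partial\Omega}(\xi))^{-(N-1)/2}$ and then spend most of the proof showing that $\int_{\partial\Omega}|z-x|^{-(N-1)/2}\,dz$ is bounded uniformly in $x\in\Omega$, which requires a case distinction near/away from the boundary and the $C^2$-regularity of $\partial\Omega$. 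Your approach bypasses all of this: the Dirichlet case is immediate from the $L^\infty$ maximum principle, and in the Neumann case the barrier $e^{-\beta\rho/\varepsilon}$ (with $\rho$ a smoothed distance to $\partial\Omega$) is exactly what is needed, since its outer normal derivative on $\partial\Omega$ is $+\beta/\varepsilon$, matching the size of $\partial_\nu U_{\varepsilon,\xi}$.

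Your argument is more elementary and, as you correctly note, does not actually require the hypothesis $d_{\partial\Omega}(\xi)\ge\delta$; in the paper's proof this hypothesis is genuinely used to control the factor $(d_{\partial\Omega}(\xi))^{-(N-1)/2}$. On the other hand, the paper's representation formula is not wasted effort: it is reused later in the critical case $p=1+\tfrac4N$ (their proof of Theorem~\ref{main2critico}), where one needs the precise asymptotics of $\varphi_{\varepsilon,\xi_\varepsilon}(\xi_\varepsilon)$ and of various boundary integrals, information that your barrier method does not directly provide.
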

 \begin{proof}
Arguing as in Section 7 of \cite{w4} and taking into account
Remark \ref{wei_correction1}, we immediately deduce
\begin{equation}\label{fi2}\begin{aligned}|\varphi_{\varepsilon,\xi}(x)|&\le C\int\limits_{\partial\Omega}e^{-{|z-\xi|+|z-x|\over\varepsilon}}|z-\xi|^{-{N-1\over2}}
 |z-x|^{-{N-1\over2}}\left\langle{z-x\over|z-x|},\nu\right\rangle dz\\ &\le  Ce^{-{d_{\partial\Omega}(\xi)\over\varepsilon}}\( d_{\partial\Omega}(\xi)\)^{-{N-1\over2}}\int\limits_{\partial\Omega}
 |z-x|^{-{N-1\over2}} dz.
 \end{aligned}
 \end{equation}
Then, in order to conclude the proof it is enough to show that
\begin{equation}\label{fi3}
\left\|\int\limits_{\partial\Omega}
 |z-x|^{-{N-1\over2}} dz\right\|_{L^\infty(\Omega)}\le C.
 \end{equation}
Let $\delta>0$ be fixed and small enough so that for any $x\in\Omega$ such that   $d_{\partial\Omega}(x)\le \delta$ there exists a unique $\pi_x\in\partial\Omega$ such that $|\pi_x-x|=d_{\partial\Omega}(x).$ Now it is clear that
 $$\int\limits_{\partial\Omega}
 |z-x|^{-{N-1\over2}} dz\le  \delta^{-{N-1\over2}}|\partial\Omega|\ \hbox{for any $x\in\Omega$ such that   $d_{\partial\Omega}(x)\ge \delta$}.$$
Let us consider the case  $d_{\partial\Omega}(x)\le \delta.$ By the choice of $\delta$,  we can write $x=\pi_x+d_{\partial\Omega}(x)\nu_{\pi_x},$ where $\nu_{\pi_x}$ denotes the inward normal at the boundary point $\pi_x.$ We remark that,
since $\partial\Omega$ is $C^2,$  there exists a constant $L$ such that
\[
 |\langle z-w,\nu_z\rangle|\le L |z-w|^2\ \hbox{for any}\ z,w\in \partial\Omega,
\]
and this implies
 $$
 \begin{aligned}|z-x|^2&=|z-\pi_x-d_{\partial\Omega}(x)\nu_{\pi_x}|^2=|z-\pi_x|^2+d^2_{\partial\Omega}(x)-2d_{\partial\Omega}(x)\langle z-\pi_x,\nu_{\pi_x}\rangle\\
 &
 \ge |z-\pi_x|^2\(1-2L d_{\partial\Omega}(x)\)+d^2_{\partial\Omega}(x)
 \geq |z-\pi_x|^2\(1-2L \delta\)
 \ge
 \frac12 |z-\pi_x|^2
 \end{aligned}
 $$
choosing $\delta$ so that $1-2L\delta>1/2.$
 Therefore,  it is immediate to check that there exists $C>0$ such that
 $$\int\limits_{\partial\Omega}
 |z-x|^{-{N-1\over2}} dz\le 2^{-{N-1\over2}}\int\limits_{\partial\Omega}
 |z-\pi_x|^{-{N-1\over2}} dz\le C,\quad \hbox{ $\forall x\in\Omega :d_{\partial\Omega}(x)\le \delta$}.
 $$
 That concludes the proof of \eqref{fi3}.
 \end{proof}

In the above lemma we have used the following representation formula
for $\varphi_{\varepsilon,\xi}(x)$.
\begin{remark}\label{wei_correction1}
 Let $G_\varepsilon (\cdot,P),$ $P\in\Omega,$ the Green's function of $-\varepsilon^2\Delta+1$ in $\Omega$ with Dirichlet or Neumann boundary condition. Let $\tilde G_\varepsilon(\cdot, P ),$  the Green's function of $- \Delta+1$ in the scaled domain $\Omega_{\varepsilon }:=\Omega/\varepsilon$ with Dirichlet or Neumann boundary condition.
We claim that
$$G_\varepsilon (x,P)={1\over\varepsilon^n}\tilde G_\varepsilon\(x/\varepsilon,P\)$$
 Indeed by changing   variable $\varepsilon y=x$ we get
$$ \int\limits_\Omega \( -\varepsilon^2\Delta_x G_\varepsilon (x,P)+G_\varepsilon (x,P)\)dx=\int\limits_{\Omega/\varepsilon} \( - \Delta_y \tilde G_\varepsilon (y,P)+\tilde G_\varepsilon (y,P)\)dy=1.$$
Therefore, formulas (7.4) in \cite{w4} and (9.2) in \cite{w1} have to be corrected as follows
\[
\varphi_{\varepsilon,P}(x)=\pm\({c_N}+o(1)\)
\int\limits_{\partial\Omega}e^{-{|z-P|+|z-x|\over\varepsilon}} |z-P|^{-{N-1\over2}}|z-x|^{-{N-1\over2}}{{\langle z-x,\nu\rangle}\over|z-x|}dz,
\]
where the sign + is taken in the Dirichlet case and the sign -  in the Neumann case.
\end{remark}

We are now in the position to tackle both the Dirichlet and Neumann problems with prescribed $L^2-$norm
  \begin{equation}\label{pdn2}
  \begin{cases}
  -\varepsilon^2\Delta u+   u = u^p & \hbox{in}\ \Omega,
 \\ u>0 & \hbox{in}\ \Omega,
 \\
  u=0\ \hbox{or} \ \partial_\nu u=0 & \hbox{on}\ \partial\Omega,
 \\
 \varepsilon^{-{4\over p-1}}\int\limits_\Omega u^2 =\rho.
 \end{cases} \end{equation}
 \begin{theorem}\label{main2}
 Let $\xi_0\in \Omega$ be the maximum point of the distance function from the $\partial\Omega.$
 \begin{itemize}
 \item[(i)] If $p<\frac 4N+1$ there exists $R>0$ such that for any $\rho>R$
Problem \eqref{pdn2}  has a solution $(\Lambda_\rho, u_\rho)$
for $\varepsilon:=\(\Lambda_\rho\rho\)^{(p-1)\over
(p-1)N-4}$  with $\Lambda_\rho
\to {1\over {2\sigma_0}}$ and $u_\rho$   concentrating at the point $
\xi_0$ as $\rho\to\infty$.
 \item[(ii)]  If $p>\frac 4N+1$ there exists $r>0$ such that for any $\rho<r$
Problem \eqref{pdn2} has a solution $(\Lambda_\rho, u_\rho)$ for
 $\varepsilon:=\(\Lambda_\rho\rho\)^{(p-1)\over
(p-1)N-4}$  with $\Lambda_\rho
\to {1\over {2\sigma_0}}$ and $u_\rho$    concentrating at the point $
\xi_0$ as $\rho\to0$.
  \end{itemize}
 \end{theorem}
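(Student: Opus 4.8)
The plan is to mimic exactly the argument used in the proof of Theorem \ref{main1}, replacing the role of Theorem \ref{li-wei} (boundary concentration for the Neumann problem) with Theorem \ref{www} (interior concentration for the Dirichlet or Neumann problem), and replacing the mass $\sigma_0$ of a single ``boundary bubble'' (half of $U$'s mass) with the full mass $2\sigma_0$ of $U$ on $\mathbb R^N$. Concretely, I would set
\begin{equation*}
\varepsilon^{-\frac{4}{p-1}+N}=\Lambda\rho,\qquad \Lambda=\Lambda(\rho)\in\left[\frac{1}{4\sigma_0},\frac{1}{\sigma_0}\right],
\end{equation*}
and observe, exactly as in \eqref{la}, that $\varepsilon\to0$ if and only if either $p<\frac4N+1$ and $\rho\to\infty$, or $p>\frac4N+1$ and $\rho\to0$. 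Then Theorem \ref{www} provides, for $\varepsilon$ small, a solution $u_\varepsilon$ of \eqref{pdn} of the form \eqref{ue}, concentrating at the most-centered point $\xi_0$.

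The second step is to compute the $L^2$-mass of $u_\varepsilon$ and show it equals $\rho\left[2\Lambda(\rho)\sigma_0+o(1)\right]$. Writing $u_\varepsilon(x)=U\!\left(\frac{x-\xi_\varepsilon}{\varepsilon}\right)+\phi_\varepsilon(x)$ with $\phi_\varepsilon:=-\varphi_{\varepsilon,\xi_\varepsilon}+\phi_{\varepsilon,\xi_\varepsilon}$, I would estimate $\|\phi_\varepsilon\|_{L^2(\Omega)}$ using \eqref{restoe} together with Lemma \ref{fi}: since $d_{\partial\Omega}(\xi_\varepsilon)$ stays bounded away from $0$ (because $\xi_\varepsilon\to\xi_0$, the interior maximum point of the distance), both $\varphi_{\varepsilon,\xi_\varepsilon}$ and $\phi_{\varepsilon,\xi_\varepsilon}$ are exponentially small in $\varepsilon$ in the relevant norms, so in particular $\left(\int_\Omega\phi_\varepsilon^2\right)^{1/2}=\mathcal O(\varepsilon^{N/2}e^{-c/\varepsilon})$ for some $c>0$. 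Expanding the square,
\begin{equation*}
\varepsilon^{-\frac{4}{p-1}}\int_\Omega u_\varepsilon^2
=\varepsilon^{-\frac{4}{p-1}+N}\left[\int_{\frac{\Omega-\xi_\varepsilon}{\varepsilon}}U^2(y)\,dy+\mathcal O\!\left(e^{-c/\varepsilon}\right)\right]
=\varepsilon^{-\frac{4}{p-1}+N}\left[2\sigma_0+\mathcal O\!\left(e^{-c/\varepsilon}\right)\right],
\end{equation*}
where the error in replacing the rescaled domain by $\mathbb R^N$ is also exponentially small by the decay \eqref{groundstate} of $U$ and the fact that $\xi_\varepsilon$ is at uniformly positive distance from $\partial\Omega$; and all the $\mathcal O$-terms are uniform in $\Lambda=\Lambda(\rho)$ as $\rho\to\infty$ or $\rho\to0$. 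Recalling the choice of $\varepsilon$ this equals $\rho\left[2\Lambda(\rho)\sigma_0+o(1)\right]$.

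The final step is the fixed-point / continuity argument to adjust the free parameter: since $\Lambda\mapsto 2\Lambda\sigma_0$ maps $\left[\frac{1}{4\sigma_0},\frac{1}{\sigma_0}\right]$ onto $\left[\frac12,2\right]$, which contains $1$ in its interior with room to spare, the $o(1)$ perturbation (uniform in $\Lambda$) does not prevent solving $2\Lambda(\rho)\sigma_0+o(1)=1$ for $\rho$ large (resp. small) enough; this yields $R$ (resp. $r$) and a solution $(\Lambda_\rho,u_\rho)$ with $\varepsilon=(\Lambda_\rho\rho)^{\frac{p-1}{(p-1)N-4}}$, $\Lambda_\rho\to\frac{1}{2\sigma_0}$, and $u_\rho$ concentrating at $\xi_0$. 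The case $p<\frac4N+1$ gives (i) and $p>\frac4N+1$ gives (ii). I expect essentially no serious obstacle here: unlike the critical case $p=1+\frac4N$, the leading term $2\sigma_0$ in the mass is nonzero and the correction is exponentially (not merely polynomially) small, so no second-order expansion is needed and the argument is a routine transcription of the proof of Theorem \ref{main1}. The only point requiring a little care is justifying the exponential smallness of $\varphi_{\varepsilon,\xi_\varepsilon}$ and $\phi_{\varepsilon,\xi_\varepsilon}$ uniformly, which follows from Proposition \ref{visco}, Lemma \ref{fi}, and \eqref{restoe} once we know $\xi_\varepsilon$ stays in a fixed compact subset of $\Omega$.
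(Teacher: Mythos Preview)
Your proposal is correct and follows essentially the same route as the paper's own proof: the same choice of $\varepsilon^{-\frac{4}{p-1}+N}=\Lambda\rho$ with $\Lambda\in[\frac{1}{4\sigma_0},\frac{1}{\sigma_0}]$, the same appeal to Theorem~\ref{www}, Lemma~\ref{fi} and \eqref{restoe} to control the remainder, and the same continuity argument to adjust $\Lambda$. The only (harmless) imprecision is the claimed bound $\bigl(\int_\Omega\phi_\varepsilon^2\bigr)^{1/2}=\mathcal O(\varepsilon^{N/2}e^{-c/\varepsilon})$: Lemma~\ref{fi} gives only an $L^\infty$ bound on $\varphi_{\varepsilon,\xi_\varepsilon}$, so its $L^2$-norm is $\mathcal O(e^{-c/\varepsilon})$ without the extra $\varepsilon^{N/2}$ factor, and after factoring out $\varepsilon^{-\frac{4}{p-1}+N}$ the bracketed error picks up a negative power of $\varepsilon$; this is still $o(1)$ since the exponential decay dominates, exactly as the paper records.
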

 \begin{proof}
We want to reduce the existence of solutions to problem \eqref{pdn2} with
variable but prescribed $L^2-$norm to the existence of solutions to problem
\eqref{pdn} where the parameter $\varepsilon$ is small. Let us choose
 \begin{equation}\label{la22}
 \varepsilon^{-{4\over p-1}+   N }=\Lambda \rho\ \hbox{with}\ \Lambda=\Lambda(\rho)\in\left[\frac{1}{4  {\sigma_0}} ,\frac1{ {\sigma_0}}\right]
 \end{equation}
 where  $\sigma_0$ is defined in \eqref{so}.
 It is clear that $\varepsilon\to0$ if and only if either
  $ p<\frac 4N +1$ and $\rho\to\infty$ or
  $  p>\frac 4N +1$ and $\rho\to0.$

 By Theorem \ref{www} we deduce that for any $\Lambda$ as in \eqref{la22}, there exists either $R>0$ or $r>0$ such that for any $\rho>R$ or $\rho<r$ problem \eqref{pn} has a solution $u_{\varepsilon}$ as in \eqref{ue} such that $\varepsilon$ satisfies \eqref{la22}.
 Now, we have to choose the free parameter $\Lambda=\Lambda(\rho)$ such that the $L^2-$norm of the solution is $\rho.$ Lemma \ref{fi}
and \eqref{restoe} yield
\begin{equation*}
\begin{aligned}\varepsilon^{-{4\over p-1}}
 \int\limits_\Omega u_\varepsilon^2(x)dx
 =&
 \varepsilon^{-{4\over p-1}}\int\limits_\Omega \(U\(\frac{x-\xi_\varepsilon}\varepsilon\)-\varphi_{\varepsilon,\xi_\varepsilon}(x)+\phi_{\varepsilon,\xi_\varepsilon} (x)\)^2dx
 \\
 =&
 \varepsilon^{-{4\over p-1}}
 \left[
 \int\limits_\Omega U^2\(\frac{x-\xi_\varepsilon}\varepsilon\)dx-2\int\limits_\Omega\varphi_{\varepsilon,\xi_\varepsilon}(x)U\(\frac{x-\xi_\varepsilon}\varepsilon\)dx\right.
 \\ &\left.
 +\int\limits_\Omega \varphi^2_{\varepsilon,\xi_\varepsilon}(x)dx \right.
 \\ &\left.
 +2\int\limits_\Omega \(U\(\frac{x-\xi_\varepsilon}\varepsilon\)-\varphi_{\varepsilon,\xi_\varepsilon}(x)\)\phi_{\varepsilon,\xi_\varepsilon} (x) dx+
 \int\limits_\Omega \phi^2_{\varepsilon,\xi_\varepsilon} (x) dx\right]
 \\
 =&
 \varepsilon^{-{4\over p-1}+N}\left[
 \ \int\limits_{\mathbb R^N} U^2\(y\)dy+o(1)
 \right]
 \\
 =&\rho \left[2 \Lambda(\rho)\sigma_0+o(1)\right]
 \end{aligned}\end{equation*}
 where the last equality comes from \eqref{la22}.
 Finally, it is clear that it is possible to choose $\Lambda(\rho)$ as in \eqref{la22}, when either $\rho\to+\infty$ or $\rho\to0,$ such that
 $$
2 \Lambda (\rho)\sigma_0+o(1) =1
 $$
which is immediately satisfied for
$\Lambda (\rho)=\frac1{2\sigma_{0}}+o(1)$.
Then  $ u_\varepsilon $ has the prescribed $L^2-$norm and  the proof is completed.\\
\end{proof}

 \begin{remark} We point out that  the existence result Theorem \ref{www} holds true  when $\xi_0$ is a  stable critical point of the distance function from the boundary as pointed out by Grossi and Pistoia in \cite{gp}.
 Therefore, also
 (i) and (ii) of Theorem \ref{main2} holds true in this more general situation.
 \end{remark}

 \subsubsection{The critical case}
Let us consider
the critical case $p=\frac 4 N+1$. This is in general quite difficult to deal with.
We will prove the following result.
 \begin{theorem}\label{main2critico}
 Let $p= 1+ \frac 4N$, $\sigma_0$ be defined as in \eqref{so}, and $\xi_0\in \Omega$ be the maximum point of the distance function from the $\partial\Omega$.
 \begin{itemize}
 \item[(i)] In the case of Dirichlet boundary conditions, there exists $0<r<2\sigma_0$ such that for any $r<\rho<2\sigma_0$ Problem \eqref{pdn2} has a solution $(\eps_\rho, u_\rho)$ such that
  $\eps_\rho\to0$ and $u_\rho$   concentrates at the point $\xi_0$ as $\rho\to2\sigma_0^-$.
 \item[(ii)]  In the case of Neumann boundary conditions, there exists $R>2\sigma_0$ such that for any $2\sigma_0<\rho<R$ Problem \eqref{pdn2} has a solution $(\eps_\rho, u_\rho)$ such that
  $\eps_\rho\to0$ and $u_\rho$   concentrates at the point $\xi_0$ as $\rho\to2\sigma_0^+$.
  \end{itemize}
 \end{theorem}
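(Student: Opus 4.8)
Since $p=1+\tfrac{4}{N}$ gives $\tfrac{4}{p-1}=N$, the identity $\varepsilon^{-\frac{4}{p-1}+N}=\Lambda\rho$ exploited in the proof of Theorem~\ref{main2} degenerates, and the mass can no longer be adjusted through a free parameter at leading order. The plan is instead to use $\varepsilon$ itself as the parameter and argue by continuity. For $\varepsilon\in(0,\varepsilon_0)$ let $u_\varepsilon$ be the solution of \eqref{pdn} given by Theorem~\ref{www}, written as in \eqref{ue}, and set $\rho(\varepsilon):=\varepsilon^{-N}\int_\Omega u_\varepsilon^2\,dx$. I would prove that $\rho$ depends continuously on $\varepsilon$ (a byproduct of the Lyapunov--Schmidt construction behind Theorem~\ref{www}), that $\rho(\varepsilon)\to 2\sigma_0$ as $\varepsilon\to0^+$, and that
\[
\rho(\varepsilon)<2\sigma_0\quad\text{(Dirichlet)},\qquad \rho(\varepsilon)>2\sigma_0\quad\text{(Neumann)}
\]
for all small $\varepsilon$. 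Granting this, fix $\bar\varepsilon$ small and, in the Dirichlet case, put $r:=\rho(\bar\varepsilon)\in(0,2\sigma_0)$: for every $\rho\in(r,2\sigma_0)$ the intermediate value theorem on $(0,\bar\varepsilon]$ produces $\eps_\rho$ with $\rho(\eps_\rho)=\rho$, so that $u_\rho:=u_{\eps_\rho}$ solves \eqref{pdn2}; since $\rho(\cdot)$ stays bounded away from $2\sigma_0$ on every compact subinterval of $(0,\bar\varepsilon]$, one gets $\eps_\rho\to0$ as $\rho\to 2\sigma_0^-$, hence $u_\rho$ concentrates at $\xi_0$ by Theorem~\ref{www}. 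The Neumann case is the mirror image, on a right neighbourhood $(2\sigma_0,R)$.

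Everything thus reduces to expanding $\rho(\varepsilon)$. Inserting \eqref{ue} and using \eqref{restoe} together with Lemma~\ref{fi} (which makes $\varphi_{\varepsilon,\xi_\varepsilon}$ exponentially small in $\varepsilon$), one obtains
\[
\rho(\varepsilon)=\int\limits_{\frac{\Omega-\xi_\varepsilon}{\varepsilon}}U^2\,dy\ -\ 2\varepsilon^{-N}\int\limits_{\Omega}\varphi_{\varepsilon,\xi_\varepsilon}(x)\,U\(\tfrac{x-\xi_\varepsilon}{\varepsilon}\)\,dx\ +\ \mathcal{R}_\varepsilon,
\]
where the first term equals $2\sigma_0-T_\varepsilon$ with $T_\varepsilon:=\int_{\mathbb{R}^N\setminus\frac{\Omega-\xi_\varepsilon}{\varepsilon}}U^2>0$, and $\mathcal{R}_\varepsilon$ collects $\varepsilon^{-N}\int_\Omega\varphi_{\varepsilon,\xi_\varepsilon}^2$ and the terms containing $\phi_{\varepsilon,\xi_\varepsilon}$, to be shown of lower order; in particular $\rho(\varepsilon)\to 2\sigma_0$ is immediate. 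The sign of the cross term comes from the maximum principle applied to \eqref{prob:fi}: in the Dirichlet case $\varphi_{\varepsilon,\xi_\varepsilon}=U_{\varepsilon,\xi_\varepsilon}>0$ on $\partial\Omega$ forces $\varphi_{\varepsilon,\xi_\varepsilon}>0$ in $\Omega$, so that term is negative; in the Neumann case $\partial_\nu U_{\varepsilon,\xi_\varepsilon}<0$ on $\partial\Omega$ (because $U'<0$, see \eqref{groundstate}) forces $\varphi_{\varepsilon,\xi_\varepsilon}<0$ in $\Omega$ --- consistently with the sign fixed in Proposition~\ref{visco} --- so that term is positive. In the Dirichlet case this already settles the matter: since $0<U_{\varepsilon,\xi_\varepsilon}-\varphi_{\varepsilon,\xi_\varepsilon}<U_{\varepsilon,\xi_\varepsilon}$ in $\Omega$ one has $\varepsilon^{-N}\int_\Omega(U_{\varepsilon,\xi_\varepsilon}-\varphi_{\varepsilon,\xi_\varepsilon})^2<2\sigma_0$, with a gap comparable to the (positive) cross term, so $\rho(\varepsilon)<2\sigma_0$ as soon as the $\phi_{\varepsilon,\xi_\varepsilon}$--part of $\mathcal{R}_\varepsilon$ is shown to be of strictly smaller order.

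The genuinely hard case is the Neumann one, where $\rho(\varepsilon)-2\sigma_0$ is the difference between the positive cross term and $T_\varepsilon$, both of the same exponential order $e^{-2d_0/\varepsilon}$ with $d_0:=d_{\partial\Omega}(\xi_0)$: one must compare their polynomial prefactors. I would do this via the representation formula for $\varphi_{\varepsilon,\xi}$ of Remark~\ref{wei_correction1} and a Laplace--type expansion of the resulting boundary integral localized near the point(s) of $\partial\Omega$ closest to $\xi_\varepsilon$, using \eqref{pointse}. Heuristically, the cross term $\varepsilon^{-N}\int_\Omega\varphi_{\varepsilon,\xi_\varepsilon}U_{\varepsilon,\xi_\varepsilon}$ accumulates along the whole ``tube'' joining $\xi_\varepsilon$ to that part of $\partial\Omega$ --- there $|\varphi_{\varepsilon,\xi_\varepsilon}|$ grows and $U_{\varepsilon,\xi_\varepsilon}$ decays at matching exponential rates, so their product stays roughly constant over a tube of length $\sim d_0/\varepsilon$ --- whereas $T_\varepsilon$ only sees the far end of that tube; this makes the cross term larger by a factor of order $\varepsilon^{-1}$, so that $\rho(\varepsilon)>2\sigma_0$ for $\varepsilon$ small. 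Making this comparison rigorous is, I expect, the main obstacle, and it accounts for the technicalities mentioned in the Introduction. A secondary technical point, real only when $p<2$ (i.e.\ $N\ge 5$), is that the plain Cauchy--Schwarz bound for the remaining contribution $\varepsilon^{-N}\int_\Omega(U_{\varepsilon,\xi_\varepsilon}-\varphi_{\varepsilon,\xi_\varepsilon})\phi_{\varepsilon,\xi_\varepsilon}$ deduced from \eqref{restoe} is not of smaller order than the cross term; to handle it one returns to the equation solved by $\phi_{\varepsilon,\xi_\varepsilon}$ in the reduction and uses that this term is in fact quadratically small in the approximation error.
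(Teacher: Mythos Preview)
Your overall strategy---treat $\rho(\varepsilon)=\varepsilon^{-N}\int_\Omega u_\varepsilon^2$ as a continuous function of $\varepsilon$, expand it, and determine its sign relative to $2\sigma_0$---is exactly the paper's, and your identification of the cross term $\Theta_\varepsilon:=\varepsilon^{-N}\int_\Omega \varphi_{\varepsilon,\xi_\varepsilon}U_{\varepsilon,\xi_\varepsilon}$ as the leading correction, with its sign fixed by the maximum principle, is correct. Your Dirichlet shortcut (using $0<U_{\varepsilon,\xi_\varepsilon}-\varphi_{\varepsilon,\xi_\varepsilon}<U_{\varepsilon,\xi_\varepsilon}$) is a nice observation the paper does not make; however it does not avoid the remainder issue you flag, so in the end both cases need the same machinery.

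The paper's organization differs from yours in one clean way. Rather than a Laplace expansion comparing polynomial prefactors of $\Theta_\varepsilon$ and $T_\varepsilon$, the paper proves a single lemma (Lemma~\ref{lemma_cri1}): $\varphi_{\varepsilon,\xi_\varepsilon}(\xi_\varepsilon)=o(\Theta_\varepsilon)$. The argument is that $\Theta_\varepsilon/\varphi_{\varepsilon,\xi_\varepsilon}(\xi_\varepsilon)=\int V_{\varepsilon,\xi_\varepsilon}U\to\int V_{\xi_0}U=+\infty$, because $U(y)e^{\langle\omega,y\rangle}\notin L^1$ for any unit vector $\omega$ (this is your ``tube'' heuristic, made rigorous via the convergence $V_{\varepsilon,\xi}\to V_\xi$ recalled before the lemma). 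Then \emph{every} other term---$T_\varepsilon$, $\varepsilon^{-N}\int\varphi^2$, the $\phi$-terms---is shown to be $\mathcal O(|\varphi_{\varepsilon,\xi_\varepsilon}(\xi_\varepsilon)|)$, hence $o(\Theta_\varepsilon)$, via the boundary representation of Remark~\ref{wei_correction1}. This avoids extracting any exact rate.

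Your genuine gap is the treatment of $\varepsilon^{-N}\int_\Omega U_{\varepsilon,\xi_\varepsilon}\phi_{\varepsilon,\xi_\varepsilon}$ for $N\ge 5$. The claim that ``this term is in fact quadratically small in the approximation error'' is not substantiated: in the reduction $\phi$ is \emph{linear} in the residual, and $\phi$ is only orthogonal to $\partial_iU$, not to $U$, so there is no free cancellation. The paper does not argue this way; it imports a \emph{pointwise} weighted estimate from Ni--Wei and Wei,
\[
\left|\frac{\phi_{\varepsilon,\xi_\varepsilon}(\varepsilon y+\xi_\varepsilon)}{\varphi_{\varepsilon,\xi_\varepsilon}(\xi_\varepsilon)}\right|\le C e^{\mu|y|},\qquad \mu<1\ \text{close to }1,
\]
which immediately gives $\varepsilon^{-N}\int U_{\varepsilon,\xi_\varepsilon}\phi_{\varepsilon,\xi_\varepsilon}=\mathcal O(|\varphi_{\varepsilon,\xi_\varepsilon}(\xi_\varepsilon)|)$ since $U(y)e^{\mu|y|}\in L^1$. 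Without this (or an equivalent substitute), your argument for $N\ge5$ is incomplete in both the Dirichlet and the Neumann case.
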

Notice that in this result we only know that $\eps_\rho = o(1)$ as $\rho\to2\sigma_0$, and we can provide the exact asymptotics only in dimension $N=1$, see Remark \ref{rem:DirN=1} ahead.
\\
In the proof of the above result we will need a deeper comprehension
on the asymptotical behavior of $\varphi_{\varepsilon,\xi_\varepsilon}$.
Following   \cite{nw,w1,w2,w3},
 set
$$V_{\varepsilon,\xi}(y):={\varphi_{\varepsilon,\xi}(\varepsilon y+\xi)\over\varphi_{\varepsilon,\xi}(\xi)},\ y\in\Omega_{\varepsilon,\xi}:={\Omega-\xi\over\varepsilon}.$$
Then for any sequence $\varepsilon_n\to0$ there exists a subsequence ${\varepsilon_n}_k$ such that
$$V_{{\varepsilon_n}_k,\xi}\to V_\xi\ \hbox{uniformly on compact sets of $\mathbb R^N$},$$
where
\begin{equation}\label{v1}
V_\xi(y)=\int\limits_{\partial\Omega} e^{\langle{\zeta-\xi\over|\zeta-\xi|},y\rangle}d\mu_{\xi}(\zeta)\end{equation}
where $d\mu_{\xi}$ is a bounded Borel measure on $\partial \Omega$ with $\int\limits_{\partial\Omega}d\mu_{\xi}(\zeta)=1$
and supp$\(d\mu_{\xi}\)\subset\{\zeta\in\partial\Omega\ :\ |\zeta-\xi|=d_{\partial\Omega}(\xi)\}.$
Moreover for any $\eta>0$ it holds true
$$\sup\limits_{y\in\Omega_{{\varepsilon_n}_k,\xi}}e^{-(1+\eta)|y|}\left|V_{{\varepsilon_n}_k,\xi}(y)- V_\xi(y)\right|\to 0\ \hbox{as}\ {\varepsilon_n}_k\to0.$$

\begin{lemma}\label{lemma_cri1}
Let $p=1+\frac4N$ and define
\begin{equation}\label{eq:deftheta}
\Theta_\varepsilon:= \int\limits_{\frac{\Omega-\xi_\varepsilon}\varepsilon}\varphi_{\varepsilon,\xi_\varepsilon}(\varepsilon y+\xi_\varepsilon)U\(y\)dy.
\end{equation}
Then, $\Theta_\varepsilon=o(1)$ as $\varepsilon\to0$ and it holds
\begin{equation}\label{claim}
\begin{aligned}
\varphi_{\varepsilon,\xi_\varepsilon} (\xi_\varepsilon)=o\(  \Theta_\varepsilon \).
\end{aligned}
\end{equation}
\end{lemma}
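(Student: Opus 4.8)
The plan is to settle the easy assertion $\Theta_\varepsilon=o(1)$ directly and to reduce the estimate \eqref{claim} to the divergence of an explicit integral of $U$ against an exponential. For the first point, the definition of $\Theta_\varepsilon$ and the positivity of $U$ give
\[
|\Theta_\varepsilon|\le \|\varphi_{\varepsilon,\xi_\varepsilon}\|_{L^\infty(\Omega)}\int_{\frac{\Omega-\xi_\varepsilon}{\varepsilon}}U(y)\,dy\le \|\varphi_{\varepsilon,\xi_\varepsilon}\|_{L^\infty(\Omega)}\,\|U\|_{L^1(\mathbb R^N)},
\]
and $\|U\|_{L^1(\mathbb R^N)}<\infty$ by the exponential decay in \eqref{groundstate}. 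Since $\xi_\varepsilon\to\xi_0$ and $d_{\partial\Omega}(\xi_0)=\max_{\xi\in\Omega}d_{\partial\Omega}(\xi)>0$, we have $d_{\partial\Omega}(\xi_\varepsilon)\ge\delta>0$ for $\varepsilon$ small, whence $\|\varphi_{\varepsilon,\xi_\varepsilon}\|_{L^\infty(\Omega)}\le Ce^{-\delta/\varepsilon}$ by Lemma \ref{fi}; in particular $\Theta_\varepsilon$ is exponentially small.

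For \eqref{claim}, I would factor out $\varphi_{\varepsilon,\xi_\varepsilon}(\xi_\varepsilon)$. Using $\varphi_{\varepsilon,\xi_\varepsilon}(\varepsilon y+\xi_\varepsilon)=\varphi_{\varepsilon,\xi_\varepsilon}(\xi_\varepsilon)\,V_{\varepsilon,\xi_\varepsilon}(y)$ we get $\Theta_\varepsilon=\varphi_{\varepsilon,\xi_\varepsilon}(\xi_\varepsilon)\,I_\varepsilon$ with $I_\varepsilon:=\int_{(\Omega-\xi_\varepsilon)/\varepsilon}V_{\varepsilon,\xi_\varepsilon}(y)\,U(y)\,dy$. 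Since $\varphi_{\varepsilon,\xi_\varepsilon}$ has a constant sign in $\Omega$ for $\varepsilon$ small — positive in the Dirichlet case, by the maximum principle, the boundary datum $U_{\varepsilon,\xi_\varepsilon}$ being positive, and negative in the Neumann one — it follows that $\varphi_{\varepsilon,\xi_\varepsilon}(\xi_\varepsilon)\ne0$, $V_{\varepsilon,\xi_\varepsilon}\ge0$ and $I_\varepsilon>0$, so \eqref{claim} amounts to proving $I_\varepsilon\to+\infty$.

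To prove $I_\varepsilon\to+\infty$ I would argue by compactness. Given $\varepsilon_n\to0$, since the $V_{\varepsilon,\xi}$ are nonnegative solutions of $-\Delta V+V=0$ normalized by $V(0)=1$, a Harnack inequality and elliptic estimates make the family $\{V_{\varepsilon,\xi}\}$ precompact in $C_{loc}(\mathbb R^N)$, and up to a subsequence $V_{\varepsilon_n,\xi_{\varepsilon_n}}\to V$ locally uniformly, with $V$ nonnegative and of the form \eqref{v1}. As $(\Omega-\xi_{\varepsilon_n})/\varepsilon_n\nearrow\mathbb R^N$, Fatou's lemma gives $\liminf_n I_{\varepsilon_n}\ge\int_{\mathbb R^N}VU$, so it is enough to check $\int_{\mathbb R^N}VU=+\infty$ for every $V$ as in \eqref{v1}. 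Writing $V(y)=\int_{\partial\Omega}e^{\langle\omega(\zeta),y\rangle}\,d\mu_{\xi_0}(\zeta)$ with $\omega(\zeta):=(\zeta-\xi_0)/|\zeta-\xi_0|$ a unit vector and $\mu_{\xi_0}$ a probability measure, Tonelli's theorem and the rotational invariance of $U$ give
\[
\int_{\mathbb R^N}V(y)U(y)\,dy=\int_{\partial\Omega}\Bigl(\int_{\mathbb R^N}e^{\langle\omega(\zeta),y\rangle}U(y)\,dy\Bigr)d\mu_{\xi_0}(\zeta)=\int_{\mathbb R^N}e^{y_N}U(y)\,dy=:J,
\]
the inner integral being independent of the unit vector $\omega(\zeta)$. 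Finally $J=\int_{\mathbb R}e^{t}m(t)\,dt$ with $m(t):=\int_{\mathbb R^{N-1}}U\bigl(\sqrt{|y'|^2+t^2}\bigr)\,dy'$; restricting to $|y'|\le\sqrt t$, where $U\bigl(\sqrt{|y'|^2+t^2}\bigr)\ge U(t+1)\ge c\,t^{-(N-1)/2}e^{-t}$ for $t$ large by \eqref{groundstate} and the monotonicity of $U$, one obtains $m(t)\ge c'e^{-t}$ for $t$ large, hence $J=+\infty$.

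All of the above is soft except the lower bound $m(t)\gtrsim e^{-t}$: the codimension-one transverse spreading of the radial profile $U$ around its axis contributes a factor $t^{(N-1)/2}$ that exactly cancels the algebraic decay $t^{-(N-1)/2}$ in \eqref{groundstate}, and it is this balance that forces $\int_{\mathbb R^N}e^{y_N}U\,dy$, and hence $I_\varepsilon$, to diverge. I expect this — and, if one also wanted the sharper rate $I_\varepsilon\sim c\varepsilon^{-1}$ — to be the only place where the precise asymptotics \eqref{groundstate} are genuinely needed; everything else relies on Lemma \ref{fi}, the maximum principle and the properties of $V_{\varepsilon,\xi}$ recalled before the statement.
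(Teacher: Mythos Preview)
Your argument is correct and follows essentially the same route as the paper: factor $\Theta_\varepsilon=\varphi_{\varepsilon,\xi_\varepsilon}(\xi_\varepsilon)\,I_\varepsilon$, use the local uniform convergence $V_{\varepsilon,\xi_\varepsilon}\to V_\xi$ together with positivity to bound $\liminf I_\varepsilon$ from below by $\int_{\mathbb R^N}V_\xi U$, and conclude by showing this last integral is $+\infty$. The only difference is cosmetic: the paper truncates to a ball of radius $R$ and lets $R\to\infty$, while you invoke Fatou, and you supply the explicit lower bound $m(t)\gtrsim e^{-t}$ for the divergence of $\int e^{y_N}U$, which the paper simply asserts.
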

\begin{proof}
First, applying Lemma \ref{fi} one gets that  $\Theta_\eps = o(1)$.
For any $R>0$
$$
\frac{\Theta_\varepsilon}{\varphi_{\varepsilon,\xi_\varepsilon}( \xi_\varepsilon)}
=
\int\limits_{\frac{\Omega-\xi_\varepsilon}\varepsilon}V_{\varepsilon,\xi_\varepsilon}(y)U\(y\)dy
\ge  \int\limits_{B( \xi_\varepsilon,R)}V_{\varepsilon,\xi_\varepsilon}(y)U\(y\)dy
$$
and by \eqref{v1} we get
$$
\liminf_{\varepsilon\to0} \frac{\Theta_\varepsilon}{\varphi_{\varepsilon,\xi_\varepsilon}( \xi_\varepsilon)}
  \ge  \int\limits
  _{B( \xi ,R)}U(y)dy\int\limits_{\partial\Omega} e^{\langle{\zeta-\xi\over|\zeta-\xi|},y\rangle}d\mu_{\xi}(\zeta)
   $$
  and letting $R\to+\infty$ we immediately get  \eqref{claim} since
  $$\lim_{\varepsilon\to0} \frac1{\varphi_{\varepsilon,\xi_\varepsilon}( \xi_\varepsilon)}\int\limits
  _{\frac{\Omega-\xi_\varepsilon}\varepsilon}\varphi_{\varepsilon,\xi_\varepsilon}(\varepsilon y+\xi_\varepsilon)U\(y\)dy=+\infty,$$
  because the function
  $$y\to U(y) \int\limits_{\partial\Omega} e^{\langle{\zeta-\xi\over|\zeta-\xi|},y\rangle}d\mu_{\xi}(\zeta)\not\in L^1(\mathbb R).$$
This concludes the proof.
\end{proof}
 As a consequence of Lemma \ref{lemma_cri1} we will get that the leading term of the $L^2-$norm of the solution is
$$
\varepsilon^{-{4\over p-1}}\int\limits_\Omega u_\varepsilon^2(x)dx
\sim 2\sigma_0 -2 \Theta_\varepsilon.
$$
In general  it is difficult to find the exact rate  of $\Theta_\varepsilon$ in terms of $\varepsilon $ and this is why we cannot choose the parameter $\varepsilon$ in terms of the prescribed norm $\rho$  as in Theorem \ref{main1crit} and  Theorem \ref{main3} - (iii).\\
We are now in the position to give the proof of Theorem \ref{main2critico}.
\begin{proof}[Proof of Theorem \ref{main2critico}]
Taking into account \eqref{ue} and \eqref{eq:deftheta} we get
\begin{equation}\label{eq:contol2}
\begin{aligned}
\varepsilon^{-{4\over p-1}}
 \int\limits_\Omega u_\varepsilon^2(x)dx
 =
 &  \int\limits_{\mathbb R^N} U^2\(y\)dy-
 \int\limits_{\mathbb R^N\setminus \frac{\Omega-\xi_\varepsilon}\varepsilon} U^2\(y\)dy  -2\Theta_\varepsilon+\varepsilon^{-N}\int\limits_{\Omega} \varphi^2_{\varepsilon,\xi_\varepsilon}(x)dx
 \\ &
 +2 \varepsilon^{-N}\int\limits_{\Omega}  U\(\frac{x-\xi_\varepsilon}\varepsilon\) \phi_{\varepsilon,\xi_\varepsilon} (x)   -2 \varepsilon^{-N}\int\limits_{\Omega}  \varphi_{\varepsilon,\xi_\varepsilon}(x)\phi_{\varepsilon,\xi_\varepsilon} (x) dx
  \\ &+
 \varepsilon^{-N}\int\limits_{\Omega}\phi^2_{\varepsilon,\xi_\varepsilon} (x) dx.  \\
\end{aligned}\end{equation}
Let us estimate all the right-hand side terms of this formula.
First of all,  taking into account the size of the error \eqref{restoe}, we get
\begin{equation}\label{eq:resto}
\varepsilon^{-N}\int\limits_{\Omega}\phi^2_{\varepsilon,\xi_\varepsilon} (x) dx=\mathcal O\(|\varphi_{\varepsilon,\xi_\varepsilon} (\xi_\varepsilon)|^{\min\{2,p\}}\)=o  \( |\varphi_{\varepsilon,\xi_\varepsilon} (\xi_\varepsilon)|\).
\end{equation}
In addition, recalling that $U$ is the solution of \eqref{pblim}, we obtain that
the function $U_\varepsilon(x):= U\(\frac{x-\xi_\varepsilon}\varepsilon\)$
satisfies
$$
\int\limits_{\mathbb R^N\setminus \Omega}\(\varepsilon^2|\nabla U_\varepsilon|^2+U_\varepsilon^2\)dx=
\int\limits_{\mathbb R^N\setminus \Omega} U_\varepsilon^{p+1}dx+\varepsilon^2\int\limits_{\partial\Omega} \partial_\nu U_\varepsilon U_\varepsilon dz
$$
so that
 $$
\begin{aligned}
\int\limits_{\mathbb R^N\setminus {\Omega-\xi_\varepsilon\over\varepsilon}} U^2\(y\)dy& =\varepsilon^{-N}\int\limits
  _{\mathbb R^N\setminus \Omega} U^2\(\frac{x-\xi_\varepsilon}\varepsilon\)dx\le \varepsilon^{-N}\int\limits
  _{\mathbb R^N\setminus \Omega} U^{p+1}\(\frac{x-\xi_\varepsilon}\varepsilon\)dx+\\
 & \qquad+\varepsilon^{2-N}\int\limits_{\partial\Omega}
  U \(\frac{z-\xi_\varepsilon}\varepsilon\)\frac 1\varepsilon  U' \(\frac{z-\xi_\varepsilon}\varepsilon\) {{\langle z-\xi_\varepsilon,\nu\rangle}\over|z-\xi_\varepsilon|}dz\\
  &=\mathcal O\( |\varphi_{\varepsilon,\xi_\varepsilon} (\xi_\varepsilon)|\).  \end{aligned}$$
Let us explain why the last equality holds.  From \eqref{pointse}
we deduce that for $\epsilon$ sufficiently small $B(\xi_\varepsilon,d_{\partial\Omega}(\xi_\varepsilon)\subset \Omega$;
then \eqref{groundstate} yields
$$\begin{aligned}\varepsilon^{-N}\int\limits
  _{\mathbb R^N\setminus \Omega} U^{p+1}\(\frac{x-\xi_\varepsilon}\varepsilon\)dx&\le
\varepsilon^{-N}\int\limits
  _{\mathbb R^N\setminus B(\xi_\varepsilon,d_{\partial\Omega}(\xi_\varepsilon)} U^{p+1}\(\frac{x-\xi_\varepsilon}\varepsilon\)dx\\ &=\mathcal O\(
    \varepsilon^{(p+1)\frac{N-1}{2}-N}e^{-(p+1){d_{\partial\Omega}(\xi_\varepsilon)\over\varepsilon}}\)\\
    &=  o\( |\varphi_{\varepsilon,\xi_\varepsilon} (\xi_\varepsilon)|\).\end{aligned}$$
Moreover, from \eqref{pointse} we get that $\frac{|z-\xi_{\epsilon}|}{\epsilon}\to+\infty$ for every $z\in \partial \Omega$, so that from
\eqref{groundstate} and using  the expression of $\varphi_{\varepsilon,\xi_\varepsilon} $ given in Remark \ref{wei_correction1} we get
$$
\begin{aligned}
&\varepsilon^{2-N}\left| \,\int\limits_{\partial\Omega}
  U \(\frac{z-\xi_\varepsilon}\varepsilon\)\frac 1\varepsilon  U' \(\frac{z-\xi_\varepsilon}\varepsilon\) {{\langle z-\xi_\varepsilon,\nu\rangle}\over|z-\xi_\varepsilon|}dz\right|
\\ &
  = \varepsilon^{1-N}\left| \,  \int\limits_{\partial\Omega}  e^{-{2|z-\xi_\varepsilon|\over\varepsilon}} \left|{z-\xi_\varepsilon\over \varepsilon}\right|^{-(N-1)} {{\langle z-\xi_\varepsilon,\nu\rangle}\over|z-\xi_\varepsilon|}(\mathfrak c+o(1))dz\right|
\\ &
= \mathcal O\( |\varphi_{\varepsilon,\xi_\varepsilon} (\xi_\varepsilon)|\).\end{aligned}
$$
Using these asymptotical information and taking into account
\eqref{eq:resto}, \eqref{eq:contol2} becomes
\begin{equation}\label{eq:cru3}
\begin{aligned}
\varepsilon^{-{4\over p-1}}
 \int\limits_\Omega u_\varepsilon^2(x)dx
 =
 & 2\sigma_{0}
 -2\Theta_\varepsilon+\varepsilon^{-N}\int\limits_{\Omega} \varphi^2_{\varepsilon,\xi_\varepsilon}(x)dx
-2\varepsilon^{-N}\int\limits_{\Omega}  \varphi_{\varepsilon,\xi_\varepsilon}(x)\phi_{\varepsilon,\xi_\varepsilon} (x) dx
\\ &
+2 \varepsilon^{-N}\int\limits_{\Omega}  U\(\frac{x-\xi_\varepsilon}\varepsilon\) \phi_{\varepsilon,\xi_\varepsilon} (x)
+\mathcal O\( |\varphi_{\varepsilon,\xi_\varepsilon} (\xi_\varepsilon)|\).
\end{aligned}
\end{equation}
Let us now study the last three integral terms on the right hand side.
One has
\begin{equation}\label{eq:fine1}
\begin{aligned}
\varepsilon^{-N}\int\limits_{\Omega}  U\(\frac{x-\xi_\varepsilon}\varepsilon\) \phi_{\varepsilon,\xi_\varepsilon} (x) dx
&=\mathcal O\( \(\varepsilon^{-N}\int\limits_{\Omega}\phi^2_{\varepsilon,\xi_\varepsilon} (x) dx\)^{1/2}\)\\ &=\mathcal O\(|\varphi_{\varepsilon,\xi_\varepsilon} (\xi_\varepsilon)|^{\min\{1,p/2\}}\)=\mathcal O  \( |\varphi_{\varepsilon,\xi_\varepsilon} (\xi_\varepsilon)|\)
\end{aligned}
\end{equation}
if $p\ge2,$ i.e. in low dimension $N=1,2,3,4$.  In higher dimension  the estimate   is quite delicate and we need to use some careful estimates of the error term $ \phi_{\varepsilon,\xi_\varepsilon} $ proved by Ni-Wei in   \cite{nw} (see  page 752) in the Dirichlet case and by Wei in \cite{w1} (see   page 871) in the Neumann case.
 More precisely,   it is proved that if $\mu<1$ is close enough to 1 and fixed then
\begin{equation}\label{cruciale}\left| \phi_{\varepsilon,\xi_\varepsilon} (\varepsilon y+\xi_\varepsilon)\over \varphi_{\varepsilon,\xi_\varepsilon}(\xi_\varepsilon)\right|\le C e^{\mu |y|} \ \hbox{for any}\ y\in {\Omega-\xi_\varepsilon\over\varepsilon} \end{equation}
where the constant $C$ does not depend on $\varepsilon$ when $\varepsilon$ is small enough. Therefore, from \eqref{groundstate} and \eqref{cruciale} it
follows
$$
\begin{aligned}
\varepsilon^{-N}\int\limits_{\Omega}  U\(\frac{x-\xi_\varepsilon}\varepsilon\)
\phi_{\varepsilon,\xi_\varepsilon} (x) dx
&= \int\limits_{\Omega-\xi_\varepsilon\over\varepsilon}  U\(y\)
\phi_{\varepsilon,\xi_\varepsilon} (\varepsilon y+\xi_\varepsilon)dy\\
&= \varphi_{\varepsilon,\xi_\varepsilon}(\xi_\varepsilon) \int\limits_{\Omega-
\xi_\varepsilon\over\varepsilon}  U\(y\) {\phi_{\varepsilon,\xi_\varepsilon}
(\varepsilon y+\xi_\varepsilon)\over \varphi_{\varepsilon,\xi_\varepsilon}
(\xi_\varepsilon)} dy
\\&
=\mathcal O\(|\varphi_{\varepsilon,\xi_\varepsilon}(\xi_\varepsilon)| \).
\end{aligned}
$$
Using these information in \eqref{eq:cru3}, we obtain
\begin{equation}\label{cru12}
\begin{aligned}
\varepsilon^{-{4\over p-1}}
 \int\limits_\Omega u_\varepsilon^2(x)dx
 =
 & 2\sigma_{0}-2\Theta_\varepsilon+\varepsilon^{-N}\int\limits_{\Omega} \varphi^2_{\varepsilon,\xi_\varepsilon}(x)dx
-2\varepsilon^{-N}\int\limits_{\Omega}  \varphi_{\varepsilon,\xi_\varepsilon}(x)\phi_{\varepsilon,\xi_\varepsilon} (x) dx
  \\ &+ \mathcal O \( |\varphi_{\varepsilon,\xi_\varepsilon} (\xi_\varepsilon)|\).
\end{aligned}\end{equation}

The study of the last two terms is quite delicate. First of all,
taking into account that $\varphi_{\varepsilon,\xi_\varepsilon}$
solves \eqref{prob:fi}, we get
$$\varepsilon^2\int\limits_\Omega|\nabla \varphi|^2dx+\int\limits_\Omega \varphi ^2dx=\varepsilon^2\int\limits_{\partial\Omega} \partial_\nu \varphi(z)\varphi (z)dz,$$
which implies
$$
\begin{aligned}\int\limits_\Omega \varphi ^2dx&\le \varepsilon^{2}\int\limits_{\partial\Omega} \partial_\nu \varphi(z)
\varphi (z)dz.
\end{aligned}
$$
Now, let us remind that on the boundary $\partial\Omega$ we have in the Dirichlet case
$$\varphi(z)=U\(z-\xi_\varepsilon\over\varepsilon\) $$
and  by Lemma 8.1 in \cite{w1}
$$\partial_\nu\varphi(z)= {1\over\varepsilon} U\(z-\xi_\varepsilon\over\varepsilon\){{\langle z-\xi_\varepsilon,\nu\rangle}\over|z-\xi_\varepsilon|}\(1+\mathcal O(\varepsilon)\)
$$
whereas, in the Neumann case
$$
\partial_\nu\varphi(z)= {1\over\varepsilon} U'\(z-\xi_\varepsilon\over\varepsilon\){{\langle z-\xi_\varepsilon,\nu\rangle}\over|z-\xi_\varepsilon|} .
$$
and  by Lemma 8.2 in \cite{w1}
$$\varphi(z)=  -U\(z-\xi_\varepsilon\over\varepsilon\)\(1+\mathcal O(\varepsilon)\) . $$
Then using \eqref{groundstate} and taking into account Remark
\ref{wei_correction1} we  get
\begin{equation}\label{eq:l2}
\begin{split}
\varepsilon^{-N}\int\limits_\Omega \varphi_{\varepsilon,\xi_\varepsilon}^2(x)dx
&= \varepsilon^{2-N}  \int\limits_{\partial\Omega} \frac 1\varepsilon  e^{-{2|z-\xi_\varepsilon|\over\varepsilon}} \left|{z-\xi_\varepsilon\over \varepsilon}\right|^{-(N-1)} (\mathfrak c+o(1))(1+O(\varepsilon)){{\langle z-\xi_\varepsilon,\nu\rangle}\over|z-\xi_\varepsilon|} dz
\\
&=\mathcal O\(|\varphi_{\varepsilon,\xi_\varepsilon} (\xi_\varepsilon)|\).\end{split}
\end{equation}
Then, applying Cauchy-Schwarz inequality and recalling \eqref{restoe},
one deduces that
\[
\begin{split}
\varepsilon^{-N} \int_{\Omega}\varphi_{\varepsilon,\xi_\varepsilon} \phi_{\varepsilon,\xi_\varepsilon}
&\leq \varepsilon^{-N/2}\|\varphi_{\varepsilon,\xi_\varepsilon} \|_{2}|\varphi_{\varepsilon,\xi_\varepsilon}
(\xi_{\varepsilon}) |^{\min\{1,p/2\}}=O(|\varphi_{\varepsilon,\xi_\varepsilon} (\xi_{\varepsilon})|^{\frac12})|\varphi_{\varepsilon,\xi_\varepsilon}
(\xi_{\varepsilon}) |^{\min\{1,p/2\}}
\\
& =o(|\varphi_{\varepsilon,\xi_\varepsilon} (\xi_{\varepsilon})|).
\end{split}
\]
Using this last estimate, together with \eqref{eq:l2}, in \eqref{cru12}
we obtain
$$
\varepsilon^{-{4\over p-1}}
 \int\limits_\Omega u_\varepsilon^2(x)dx
 =
2\sigma_{0} -2\Theta_\varepsilon
+\mathcal O\( |\varphi_{\varepsilon,\xi_\varepsilon} (\xi_\varepsilon)|\).
$$
In order to conclude the proof it is enough to apply
Lemma \ref{lemma_cri1}, and to recall that
$\varphi_{\eps,\xi_\eps}$ (and thus
$\Theta_\eps$) is positive (resp. negative) in the case of Dirichlet (resp. Neumann) boundary conditions (see Proposition \ref{visco}).
\end{proof}

\begin{remark}\label{rem:DirN=1}
Let us consider the case $N=1$. Without loss of generality, we can assume $\Omega=(-1,1).$
A straightforward computation shows that
in the Dirichlet case
\begin{equation}\label{diri}
\varphi_{\varepsilon,0}(x)={U\(\frac1\varepsilon\) \cosh \frac x\varepsilon\over \cosh \frac1\varepsilon}\end{equation}
 and in the Neumann case
\begin{equation}\label{neu}
\varphi_{\varepsilon,0}(x)={U'\(\frac1\varepsilon\) \cosh \frac x\varepsilon\over \sinh \frac1\varepsilon}.
\end{equation}
This is because $\varphi=\varphi_{\varepsilon,0}$ solves
$$-\varepsilon^2\varphi^{\prime\prime}+\varphi=0\ \hbox{in}\ (-1,1)$$
with boundary condition
$$\varphi(1)=\varphi(-1)=U\(1/\varepsilon\)\ \hbox{in the Dirichlet case}$$
or
$$\varphi'(1)={1\over \varepsilon}U'\(1/\varepsilon\),\  \varphi'(-1)=-{1\over \varepsilon}U'\(1/\varepsilon\)\ \hbox{in the Neumann case}.$$

Here  $U$ is explicitly given by
$U(x)=3^{1/4}(\cosh 2x)^{-1/2}.$
In particular
$$\varphi_{\varepsilon,0}(0)\sim \pm  2^{3/2} 3^{1/4}  e^{-{2/\varepsilon}}.$$
Moreover we have
$$\Theta_\varepsilon:=-2\int\limits
  _{-{1\over\varepsilon}}^{1\over\varepsilon}\varphi_{\varepsilon,0}(\varepsilon y )U\(y\)dy \sim\left\{\begin{aligned} - 3^{1/4} 8 {1\over\varepsilon}e^{-{2\over\varepsilon}}\ \hbox{in the Dirichlet case}\\
+3^{1/4} 8 {1\over\varepsilon}e^{-{2\over\varepsilon}}\ \hbox{in the Neumann case},
  \end{aligned}\right.$$
because
$$\int \limits_{-{1\over\varepsilon}}^{1\over\varepsilon}
  \cosh (y) (\cosh 2y)^{-1/2}dy={\sqrt 2}\log\(\sqrt 2\sinh y+\sqrt {2\sinh ^2 y+1}\)\Big |^{y=1/\varepsilon}_{y=0}\sim \sqrt2 {1\over\varepsilon}.
$$
   Finally, the leading term is
  $$\Theta_\varepsilon=-2\int\limits
  _{-{1\over\varepsilon}}^{1\over\varepsilon}\varphi_{\varepsilon,0}(\varepsilon y )U\(y\)dy \sim\left\{\begin{aligned}  - 3^{1/4} 8  {1\over\varepsilon}e^{-{2\over\varepsilon}}\ \hbox{in the Dirichlet case,}\\
 +3^{1/4} 8  {1\over\varepsilon}e^{-{2\over\varepsilon}}\ \hbox{in the Neumann case.}
  \end{aligned}\right.$$
\end{remark}

\begin{remark}\label{rm:morseint}
Let us assume that $\xi_0\in\Omega$ is a non-degenerate peak point (see Definition (1.4)-(1.5) in \cite{w3}) of the
distance function from $\partial\Omega$, i.e. there exists $a\in\mathbb R^N$ such that
$$\int\limits_{\partial\Omega}e^{\langle z-\xi_0,a\rangle}(z-\xi_0)d\mu_{\xi_0}=0$$
and the matrix
$$G(\xi_0):=\left(
\int\limits_{\partial\Omega}e^{\langle z-\xi_0,a\rangle}(z-\xi_0)_i(z-\xi_0)_jd\mu_{\xi_0}\right)_{i,j=1,\dots,N}\ \hbox{is non-singular},$$
In particular, all its eigenvalues are strictly positive. We remark that if  $\Omega$ is a ball then its center is a non-degenerate peak point.
Combining results in \cite{MR1690196,w3}, we  get that if $\epsilon$ is small enough the (unique) solution to the Dirichlet or the Neumann problem which concentrates at $\xi_0$ is non-degenerate and its Morse index is equal to $1$ in the Dirichlet case (Theorem 6.2 in \cite{MR1690196})
and is equal to $1+N$ in the Neumann case (see Theorem 1.3   in \cite{w3}).
\end{remark}

  \section{The Schr\"{o}dinger equation}\label{sub:sc}
In this section we will tackle problem \eqref{P0} for $\Omega=\R^{N}$.

First of all let us solve the singularly perturbed Schr\"{o}dinger equation
\begin{equation}\label{sch}
-\varepsilon^2\Delta u+\( \varepsilon^2 V(x)+1\) u = u^p\ \hbox{in}\ \mathbb R^N,\ u>0\ \hbox{in}\ \mathbb R^N.
\end{equation}
For sake of simplicity we will assume  $V, |\nabla V| \in L^{\infty}(\mathbb R^N)$ and, given a non-degenerate critical point $\xi_0$ of $V$, we suppose that   in a neighbourhood of $\xi_0$ the following  expansion holds true:
 \begin{equation}\label{taylor}
 V(x)=\sum\limits_{i=1}^N a_i \(x -{\xi_0}\)^2+\mathcal O\(|x-\xi_0|^3\),\ \hbox{where}\ a_i\not=0.
 \end{equation}

 The following result can be easily proved by a Ljapunov-Schmidt procedure combining the ideas of Li \cite{yyl2}, Grossi \cite{g}
 and Grossi and Pistoia \cite{gp}. A sketch of the proof is given in the Appendix.

\begin{proposition}\label{sc_exi}
 Let $\xi_0$ be a non-degenerate critical point of $V $.
  There exists $\varepsilon_0>0$ such that for any $\varepsilon\in(0,\varepsilon _0)$ there exists a solution $u_\varepsilon$ to \eqref{sch}
 which concentrate at the point $\xi_0 $   as $\varepsilon\to0.$ More precisely,
\begin{equation}\label{usc}u_\varepsilon(x)= U\(\frac{x-\xi_\varepsilon}\varepsilon\)
 -\varepsilon^4W_{\xi_0}\(\frac{x-\xi_\varepsilon}\varepsilon\)+\phi_{\varepsilon} (x)\end{equation}
 where
\begin{equation}\label{pointsc} \xi_\varepsilon\to \xi_0 \ \hbox{as}\ \varepsilon\to0,\end{equation}
 the function $W_{\xi_0}\in H^1(\mathbb R^N)$
 solves the linear problem
 \begin{equation}\label{W}-\Delta W_{\xi_0}+W_{\xi_0}-pU^{p-1}W_{\xi_0}=\sum\limits_{i=1}^N a_i y_i^2U(y)\ \hbox{in}\ \mathbb R^N\end{equation}
  and the remainder term $\phi_\varepsilon$ satisfies
\begin{equation}\label{restosc} \|\phi_{\varepsilon }\| _{H^1_\varepsilon (\mathbb R^N)}:=\(\int\limits_{\mathbb R^N}\( \varepsilon^2|\nabla \phi_{\varepsilon }|^2+ \phi_{\varepsilon }^2\)dx\)^{1/2}=
 \mathcal O\(\varepsilon^{{N\over2}+4+\eta}\)\
 \hbox{for some}\ \eta>0.
\end{equation}
 \end{proposition}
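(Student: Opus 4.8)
\emph{Proof proposal.} The plan is to run the by-now standard singular-perturbation Lyapunov--Schmidt reduction of \cite{yyl2,g,gp}; I will only dwell on what is specific to the refined ansatz \eqref{usc}. First I would rescale $x=\varepsilon y+\xi$, turning \eqref{sch} into $-\Delta v+\(\varepsilon^2 V(\varepsilon y+\xi)+1\)v=v^p$, and take the translated ground state $U$ of \eqref{pblim} as leading term. Inserting $U$ leaves the residual $\varepsilon^2 V(\varepsilon y+\xi)U(y)$; since $\xi_0$ is a non-degenerate critical point normalized as in \eqref{taylor} (so $V(\xi_0)=0$, $\nabla V(\xi_0)=0$), for $\xi$ close to $\xi_0$ this equals $\varepsilon^4\sum_i a_iy_i^2U(y)$ plus terms controlled by $\varepsilon|\xi-\xi_0|$ and $\varepsilon^3$. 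This explains the correction in \eqref{usc}: I would take
\[
Z_{\varepsilon,\xi}(x):=U\(\tfrac{x-\xi}{\varepsilon}\)-\varepsilon^4 W_{\xi_0}\(\tfrac{x-\xi}{\varepsilon}\),
\]
with $W_{\xi_0}\in H^1(\R^N)$ solving the linear problem \eqref{W}; such a solution exists because $\sum_i a_iy_i^2U$ is even and therefore $L^2$-orthogonal to $\ker\(-\Delta+1-pU^{p-1}\)=\mathrm{span}\{\partial_1U,\dots,\partial_NU\}$, the latter identity being the classical non-degeneracy of the positive ground state.

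The first main step is then the finite-dimensional reduction. The operator $L:=-\Delta+1-pU^{p-1}$ is Fredholm of index zero on $H^1(\R^N)$ with the kernel above, so I would split $H^1_\varepsilon(\R^N)=K_{\varepsilon,\xi}\oplus K_{\varepsilon,\xi}^\perp$, where $K_{\varepsilon,\xi}:=\mathrm{span}\{\partial_iU_{\varepsilon,\xi}\}$ and $U_{\varepsilon,\xi}(x):=U((x-\xi)/\varepsilon)$, and look for solutions of \eqref{sch} of the form $Z_{\varepsilon,\xi}+\phi$ with $\phi\in K_{\varepsilon,\xi}^\perp$. Projecting onto $K_{\varepsilon,\xi}^\perp$ yields the auxiliary equation, solved by contraction: the projected linearization $\Pi^\perp_{\varepsilon,\xi}L_{\varepsilon,\xi}\Pi^\perp_{\varepsilon,\xi}$ is boundedly invertible, uniformly for $\varepsilon$ small and $\xi$ in a fixed neighbourhood of $\xi_0$ (again by the non-degeneracy of $U$, transplanted to the $\varepsilon$-scale), which together with the estimate of the residual $S_\varepsilon(Z_{\varepsilon,\xi}):=-\varepsilon^2\Delta Z_{\varepsilon,\xi}+(\varepsilon^2V+1)Z_{\varepsilon,\xi}-Z_{\varepsilon,\xi}^p$ in the natural dual norm produces a unique small fixed point $\phi=\phi_{\varepsilon,\xi}$, of class $C^1$ in $\xi$.

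The second main step is the bifurcation equation: choosing $\xi=\xi_\varepsilon$ so that also the $K_{\varepsilon,\xi}$-component vanishes, which is equivalent to finding a critical point of the reduced energy $\mathcal{F}_\varepsilon(\xi):=J_\varepsilon(Z_{\varepsilon,\xi}+\phi_{\varepsilon,\xi})$, $J_\varepsilon$ denoting the energy of \eqref{sch}. Using \eqref{taylor}, the residual estimate and $\int_{\R^N}U^2=2\sigma_0$, one expects the $C^1_{\mathrm{loc}}$ expansion $\mathcal{F}_\varepsilon(\xi)=\varepsilon^N\(c_0+\sigma_0\,\varepsilon^2V(\xi)+o(\varepsilon^2)\)$, whose leading $\xi$-dependent term $\varepsilon^{N+2}\sigma_0V(\xi)$ has a non-degenerate critical point at $\xi_0$; since a non-degenerate critical point is $C^1$-stable, a degree (or implicit-function) argument applied to $\varepsilon^{-(N+2)}\nabla_\xi\mathcal{F}_\varepsilon$ gives $\xi_\varepsilon\to\xi_0$ with $\nabla_\xi\mathcal{F}_\varepsilon(\xi_\varepsilon)=0$, hence \eqref{pointsc} and a genuine solution $u_\varepsilon=Z_{\varepsilon,\xi_\varepsilon}+\phi_{\varepsilon,\xi_\varepsilon}$ of the form \eqref{usc}; positivity of $u_\varepsilon$ follows from the smallness of $\phi_{\varepsilon,\xi_\varepsilon}$ and the maximum principle.

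The hard part will be the sharp remainder bound \eqref{restosc}, i.e.\ getting a \emph{strictly positive} gain $\eta$. The point is that the residual $S_\varepsilon(Z_{\varepsilon,\xi})$ is in general only $\mathcal{O}(\varepsilon^{N/2+4})$ (indeed worse, unless $\xi$ is very close to $\xi_0$), because of the $\varepsilon|\xi-\xi_0|$ and cubic-Taylor-in-$V$ contributions, the cross term $\varepsilon^6VW_{\xi_0}$, and the quadratic part of the nonlinearity in $\varepsilon^4W_{\xi_0}$. To beat $\varepsilon^{N/2+4}$ I would exploit that $\int_{\R^N}y_iU^2=0$, so that the next term of $\mathcal{F}_\varepsilon$ is $\mathcal{O}(\varepsilon^{N+4})$ and consequently $|\xi_\varepsilon-\xi_0|=\mathcal{O}(\varepsilon^2)$; feeding this localization back into $S_\varepsilon(Z_{\varepsilon,\xi_\varepsilon})$ and bootstrapping the fixed-point estimate then yields $\|\phi_{\varepsilon,\xi_\varepsilon}\|_{H^1_\varepsilon(\R^N)}=\mathcal{O}(\varepsilon^{N/2+4+\eta})$. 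Apart from this bookkeeping, the scheme is identical to \cite{yyl2,g,gp}, which is why only a sketch is recorded in the Appendix.
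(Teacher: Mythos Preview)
Your scheme is correct and follows the same Lyapunov--Schmidt template as the Appendix, but the paper organizes the reduction differently in one essential respect, and this eliminates what you call ``the hard part''. Instead of letting $\xi$ range over a fixed neighbourhood of $\xi_0$ and later bootstrapping the localization $|\xi_\varepsilon-\xi_0|=\mathcal O(\varepsilon^2)$, the paper \emph{builds this scale into the parametrization from the start}, writing $\xi=\xi_0+\varepsilon^2\tau$ with $\tau$ in a fixed compact set $T\subset\mathbb R^N$. With this choice the expansion $V(\varepsilon y+\varepsilon^2\tau+\xi_0)=\varepsilon^2\sum_i a_i y_i^2+\mathcal O(\varepsilon^3(1+|y|^3))$ is available uniformly in $\tau\in T$, and the residual $\mathcal E_{\varepsilon,\tau}$ is immediately $\mathcal O(\varepsilon^5)$ in the (rescaled) $H^1$-norm; hence $\|\phi_{\varepsilon,\tau}\|\le C\varepsilon^5$ directly from the contraction, which after undoing the scaling gives \eqref{restosc} with $\eta=1$, no bootstrap needed.

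A second, smaller difference is that the paper does not pass through the reduced energy $\mathcal F_\varepsilon$: it tests the full equation against $\partial_jU$ and reads off the bifurcation system $Ba_j\tau_j+C\sum_{\ell,\kappa}\partial^3_{\kappa\ell j}V(\xi_0)+o(1)=0$, which is solved for $\tau=\tau_\varepsilon$ by hand since all $a_j\neq0$. Your variational route via $C^1$-stability of the non-degenerate critical point of $V$ reaches the same conclusion; the paper's direct computation is just shorter and makes the rate $|\xi_\varepsilon-\xi_0|=\mathcal O(\varepsilon^2)$ (equivalently, boundedness of $\tau_\varepsilon$) automatic rather than something to be extracted a posteriori.
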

 Next, we consider the  Schr\"{o}dinger equation with prescribed $L^2-$norm
\begin{equation}\label{schp}
\begin{cases}
-\varepsilon^2\Delta u+\( \varepsilon^2 V(x)+1\) u = u^p\ \hbox{in}\ \mathbb R^N,
\\
u>0\ \hbox{in}\ \mathbb R^N,
\\
\varepsilon^{-{4\over p-1} }\int\limits_{\mathbb R^N} u^2=\rho.
\end{cases}
\end{equation}
We  will first give an existence result
in the non-critical case.
\begin{theorem}\label{main3}
 Let $\xi_0\in\R^N$ be a non-degenerate critical point of $V$. Suppose that $p\neq \frac4N +1$ and take
 $\sigma_{0}$ as in \eqref{so}. The following conclusions
 hold
  \begin{itemize}
 \item[(i)] If $p<\frac 4N+1$ there exists $R>0$ such that for any $\rho>R$ problem \eqref{pnl} has a solution $(u_\rho,\Lambda_\rho)$ for
 $\varepsilon:=\(\Lambda_\rho\rho\)^{(p-1)\over (p-1)N-4}$  with $\Lambda_\rho \to {1\over2\sigma_0}$ and $u_\rho$   concentrating at the point $\xi_0$ as $\rho\to\infty$.
 \item[(ii)]  If $p>\frac 4N+1$ there exists $r>0$ such that for any $\rho<r$ problem \eqref{pnl}  has a solution $(u_\rho,\Lambda_\rho)$
 for $\varepsilon:=\(\Lambda_\rho\rho\)^{(p-1)\over (p-1)N-4}$ with $\Lambda_\rho \to {1\over 2\sigma_0}$ and $u_\rho$   concentrating at the point $\xi_0$ as $\rho\to0$.
\end{itemize}
 \end{theorem}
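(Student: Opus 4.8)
The plan is to follow verbatim the scheme of Theorems \ref{main1} and \ref{main2}: reduce Problem \eqref{schp}, which carries a variable but prescribed mass, to the singularly perturbed equation \eqref{sch}, where the small parameter $\varepsilon$ is controlled by Proposition \ref{sc_exi} (whose hypotheses are exactly the present ones, $V,|\nabla V|\in L^\infty$ and $\xi_0$ a non-degenerate critical point of $V$ with the expansion \eqref{taylor}). First I would set
\begin{equation*}
\varepsilon^{-\frac{4}{p-1}+N}=\Lambda\rho,\qquad \Lambda=\Lambda(\rho)\in\left[\frac{1}{4\sigma_0},\frac{1}{\sigma_0}\right],
\end{equation*}
so that $\varepsilon=(\Lambda\rho)^{\frac{p-1}{(p-1)N-4}}$, and note that $\varepsilon\to0$ exactly when $p<1+\frac4N$ and $\rho\to+\infty$, or $p>1+\frac4N$ and $\rho\to0$. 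Proposition \ref{sc_exi} then provides, for every such $\Lambda$ and for $\rho$ large enough (resp. small enough), a solution $u_\varepsilon$ of \eqref{sch} of the form \eqref{usc}, concentrating at $\xi_0$.

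The second step is to expand the rescaled mass of $u_\varepsilon$. Plugging \eqref{usc} into $\int_{\R^N}u_\varepsilon^2$, using the exponential decay of $U$ and $W_{\xi_0}$, and using \eqref{restosc} — which gives $\int_{\R^N}\phi_\varepsilon^2=\mathcal O(\varepsilon^{N+8+2\eta})$ and hence, by Cauchy--Schwarz, cross terms of order $\mathcal O(\varepsilon^{N+4+\eta})$, while the $W_{\xi_0}$ correction contributes $\mathcal O(\varepsilon^{N+4})$ — one obtains
\begin{equation*}
\varepsilon^{-\frac{4}{p-1}}\int_{\R^N}u_\varepsilon^2\,dx
=\varepsilon^{-\frac{4}{p-1}+N}\left[\int_{\R^N}U^2\,dy+\mathcal O(\varepsilon^4)\right]
=\rho\left[2\sigma_0\,\Lambda(\rho)+o(1)\right],
\end{equation*}
where $\int_{\R^N}U^2=2\sigma_0$ by \eqref{so}, the last equality uses the definition of $\varepsilon$, and the $o(1)$ is uniform with respect to $\Lambda\in[\frac{1}{4\sigma_0},\frac{1}{\sigma_0}]$ as $\rho\to+\infty$ (resp. $\rho\to0$).

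Finally I would choose the free parameter $\Lambda=\Lambda(\rho)$ so that the bracket above equals $1$, which makes the mass of $u_\varepsilon$ equal to $\rho$. Since $2\sigma_0\Lambda$ sweeps the interval $[\tfrac12,2]$ as $\Lambda$ sweeps $[\tfrac{1}{4\sigma_0},\tfrac{1}{\sigma_0}]$, and the reduced mass map $\Lambda\mapsto \varepsilon^{-4/(p-1)}\rho^{-1}\int_{\R^N}u_\varepsilon^2$ is continuous in $\Lambda$ (by the continuous dependence on $\varepsilon$ of the Lyapunov--Schmidt construction underlying Proposition \ref{sc_exi}) and uniformly close to $2\sigma_0\Lambda$, an intermediate value argument gives, for $\rho$ large (resp. small) enough, a value $\Lambda(\rho)$ with $2\sigma_0\Lambda(\rho)+o(1)=1$; in particular $\Lambda(\rho)\to\frac{1}{2\sigma_0}$, and the corresponding pair $(u_\rho,\Lambda_\rho)$ solves \eqref{schp}. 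I expect the only delicate point to be the \emph{uniformity in $\Lambda$} of the remainder estimates entering the mass expansion (together with the continuous dependence of $u_\varepsilon$ on $\varepsilon$), both of which are built into Proposition \ref{sc_exi}; the rest is a routine transcription of the arguments in the proofs of Theorems \ref{main1} and \ref{main2}.
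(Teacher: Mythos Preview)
Your proposal is correct and follows essentially the same approach as the paper's proof: set $\varepsilon^{-4/(p-1)+N}=\Lambda\rho$, invoke Proposition~\ref{sc_exi}, expand the rescaled mass as $\rho[2\sigma_0\Lambda+o(1)]$ using \eqref{restosc}, and adjust $\Lambda$ so that the bracket equals~$1$. The only cosmetic differences are the precise interval chosen for $\Lambda$ and your more explicit mention of the intermediate value/continuity argument, which the paper leaves implicit.
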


  \begin{proof}
Following the same argument of the previous sections we  reduce the existence of solutions to problem \eqref{schp} with variable but prescribed $L^2-$norm to the existence of solutions to problem \eqref{sch} where the parameter $\varepsilon$ is small.
Let us choose
 \begin{equation}\label{lasc}
 \varepsilon^{-{4\over p-1}+  N}=\Lambda \rho\ \hbox{with}\ \Lambda=\Lambda(\rho)\in\left[\frac{1}{ 2\sigma_0} ,\frac2{ 2\sigma_0}\right]
 \end{equation}
 where  $\sigma_0$ is defined in \eqref{so}.
 It is clear that $\varepsilon\to0$ if and only if either
  $ p<\frac 4N +1$ and $\rho\to\infty$ or
  $  p>\frac 4N +1$ and $\rho\to0.$
 By   Proposition \ref{sc_exi} we deduce   that for any $\Lambda$ as in \eqref{la}, there exists either $R>0$ or $r>0$ such that for any $\rho>R$ or $\rho<r$ problem \eqref{sch} has a solution $u_{\varepsilon}$ as in \eqref{usc} such that $\varepsilon$ satisfies \eqref{lasc}.
 Now, we have to choose the free parameter $\Lambda=\Lambda(\rho)$ such that the $L^2-$norm of the solution is the  prescribed value.
By \eqref{restosc} we deduce
 \begin{equation}\label{cru1sc}
 \begin{aligned}
 \varepsilon^{-{4\over p-1}} \int\limits_{\mathbb R^N}u_{ \varepsilon}
^2(x)dx&=  \varepsilon^{-{4\over p-1}}  \int\limits_{\mathbb R^N}\( U \({x-
\xi_\varepsilon\over\varepsilon}\)-\varepsilon^4W_{\xi_0} \({x-
\xi_\varepsilon\over\varepsilon}\)+\phi_\varepsilon(x)\)^2dx
 \\&
 =   \varepsilon^{-{4\over p-1}+N} \left[ \int\limits_{\mathbb R^N} U^2 \(y\) dy+\mathcal O(\varepsilon^4)\right]
\\
&= \varepsilon^{-{4\over p-1}+N}\left[  2 \sigma_0+\mathcal O(\varepsilon^{4})\right]
=\rho\Lambda(\rho)\left[2 \sigma_0+\mathcal O(\varepsilon^{4})\right],
\\
\end{aligned}\end{equation}
where  the term $\mathcal O(\varepsilon^4)$   is uniform with respect to $\Lambda=\Lambda(\rho)$   when either $\rho\to+\infty$ or $\rho\to0$
and the last equality comes from  \eqref{lasc}.

Finally, it is clear that it is possible to choose $\Lambda(\rho)$ satisfying \eqref{lasc}, when either $\rho\to+\infty$ or $\rho\to0,$ such that
 $ \Lambda=\frac{1}{2\sigma_0}+o(1) $, implying that
$u_\varepsilon$ has the prescribed $L^2-$norm. That concludes the proof.\end{proof}
The result in the mass critical case requires an extra assumption. Before stating it, it is useful to point out the following fact.
\begin{remark}\label{ipo0}

Let us point out that $W_{\xi_0}$ can be written as
 \begin{equation}\label{w1}
 W_{\xi_0}(y)=\sum\limits_{i=1}^N a_i W_i(y),\end{equation}
 where each $W_i$ solves
solves
 $$-\Delta W_i+W_i-pU^{p-1}W_i=  y_i^2 U(y).$$
  If $W_1$ denotes the solution to
$$-\Delta W_1+W_1-pU^{p-1}W_1=  y_1^2 U(y),$$
it is clear that
$$W_i(y_1,\dots,y_i,\dots,y_N):=W_1(y_i,\dots,y_1,\dots,y_N)$$
Therefore
$$\begin{aligned}\int\limits_{\mathbb R^N} W_{\xi_0}(y)U(y)dy&=\sum\limits_{i=1}^N a_i \int\limits_{\mathbb R^N} W_i(y)U(y)dy\\
&=\frac 1N\sum\limits_{i=1}^N a_i \int\limits_{\mathbb R^N} \underbrace{\(W_1(y)+\dots+W_N(y)\)}_{:=W(y)}U(y)dy\\
&=2\sum\limits_{i=1}^N a_i \underbrace{\frac 1{2N}\int\limits_{\mathbb R^N} W(y)U(y)dy}_{:=\mathfrak m}=\mathfrak m\Delta V(\xi_0),\\
\end{aligned}$$
where $W$ solves
 \begin{equation}\label{ipo2}
 -\Delta W +W -pU^{p-1}W =  |y|^2 U(y)\ \hbox{in}\ \mathbb R^N.\end{equation}
 \end{remark}
\begin{theorem}\label{th:main3crit}
 Let $p=\frac 4N+1$, $\sigma_{0}$ as in \eqref{so} and $\xi_0\in\R^N$ be a non-degenerate critical point of $V$  such that   $\Delta V(\xi_0)\not=0$. Assume
 \begin{equation}\label{ipo}
 {\mathfrak m} :=\frac 1{2N}\int\limits_{\mathbb R^N} U (y) W \(y\)dy\not=0
 \end{equation}
 where $W$ is defined in \eqref{ipo2}.  there exists $\delta>0$ such that if
 either ${\mathfrak m}\Delta V(\xi_0)>0$ and $\rho\in\(2\sigma_0-\delta,2\sigma_0\)$ or ${\mathfrak m}\Delta V(\xi_0)<0$ and $\rho\in\(2\sigma_0,2\sigma_0+\delta\)$
 problem \eqref{schp}
 with $\varepsilon^4:= \Lambda_\rho|\rho-2\sigma_0|$ has a solution $(u_\rho,\Lambda_\rho)$ such that $\Lambda_\rho \to {1\over |\mathfrak m\Delta V(\xi_0)| }$  and $u_\rho$ concentrates at the point $\xi_0$ as $\rho\to2\sigma_0.$
\end{theorem}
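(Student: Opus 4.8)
The plan is to follow the scheme of the proofs of Theorems \ref{main3} and \ref{main1crit}: one uses Proposition \ref{sc_exi} to produce, for every small $\varepsilon$, a solution $u_\varepsilon$ of the unconstrained equation \eqref{sch} concentrating at $\xi_0$, and then tunes a scalar free parameter so that $u_\varepsilon$ carries exactly the prescribed $L^2$-mass. Since now $p=1+\frac4N$, hence $\frac{4}{p-1}=N$, the first-order expansion $\varepsilon^{-N}\int_{\mathbb R^N}u_\varepsilon^2=2\sigma_0+o(1)$ is too coarse to fix the parameter, and one must expand the mass up to order $\varepsilon^4$; this is precisely why the refined profile \eqref{usc}, with the correction $-\varepsilon^4 W_{\xi_0}$, is needed. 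Concretely, one would set $\delta:=|\rho-2\sigma_0|$ and look for $\varepsilon$ with $\varepsilon^4=\Lambda\delta$, where $\Lambda=\Lambda(\delta)$ ranges over a fixed compact interval $I\subset(0,+\infty)$; then $\varepsilon\to0$ exactly when $\rho\to2\sigma_0$, uniformly for $\Lambda\in I$, and for $\delta$ small Proposition \ref{sc_exi} yields a solution $u_\varepsilon$ of \eqref{sch} of the form \eqref{usc}.

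The core computation would be the second-order expansion of the mass. Inserting \eqref{usc} and squaring, the leading term is $\varepsilon^{-N}\int_{\mathbb R^N}U^2\bigl((x-\xi_\varepsilon)/\varepsilon\bigr)\,dx=\int_{\mathbb R^N}U^2=2\sigma_0$ by \eqref{so} (note that, $\Omega=\mathbb R^N$ being translation invariant, the location of $\xi_\varepsilon$ is irrelevant here); the mixed term against $-\varepsilon^4 W_{\xi_0}$ contributes $-2\varepsilon^4\int_{\mathbb R^N}U\,W_{\xi_0}$, and Remark \ref{ipo0} rewrites $\int_{\mathbb R^N}U\,W_{\xi_0}=\mathfrak m\,\Delta V(\xi_0)$; all the remaining contributions, namely the mixed terms involving $\phi_\varepsilon$, the quadratic term $\varepsilon^8\int W_{\xi_0}^2$, and $\varepsilon^{-N}\int\phi_\varepsilon^2$, are $o(\varepsilon^4)$, the estimates for the $\phi_\varepsilon$ terms following at once from \eqref{restosc} and $\|\phi_\varepsilon\|_{L^2}\le\|\phi_\varepsilon\|_{H^1_\varepsilon(\mathbb R^N)}$. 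It is essential here that \eqref{restosc} carries the strict gain $\varepsilon^{\frac N2+4+\eta}$ with $\eta>0$: with $\eta=0$ those terms would only be $O(\varepsilon^4)$ and would spoil the expansion. One thus obtains
\[
\varepsilon^{-\frac4{p-1}}\int_{\mathbb R^N}u_\varepsilon^2\,dx
= 2\sigma_0-2\,\mathfrak m\,\Delta V(\xi_0)\,\varepsilon^4+o(\varepsilon^4)
= 2\sigma_0-2\,\mathfrak m\,\Delta V(\xi_0)\,\Lambda\delta+o(\delta),
\]
with the $o(\delta)$ uniform in $\Lambda\in I$.

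It then remains to choose $\Lambda=\Lambda(\delta)\in I$ so that this quantity equals $\rho=2\sigma_0\pm\delta$, that is, so that $-2\mathfrak m\,\Delta V(\xi_0)\,\Lambda+o(1)=\pm1$. Since $\Lambda$ must be positive, this is possible only when $\rho-2\sigma_0$ and $\mathfrak m\,\Delta V(\xi_0)$ have opposite signs, which singles out exactly the two regimes in the statement; and, choosing $I$ so as to contain the target value in its interior, one concludes by the intermediate value theorem, relying on the continuity of $u_\varepsilon$ with respect to $\varepsilon$ in the Lyapunov--Schmidt construction. The resulting $\Lambda(\delta)$ has a finite positive limit as $\rho\to2\sigma_0$, and for this choice $u_\varepsilon$ solves \eqref{schp}, which ends the proof. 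Notice that, unlike the interior-concentration critical case (Theorem \ref{main2critico}), here the correction is algebraic in $\varepsilon$, so the relation between $\varepsilon$ and $|\rho-2\sigma_0|$ is fully quantitative.

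As for the main obstacle: once Proposition \ref{sc_exi} is granted, the argument above is short, and the real technical weight sits in that proposition, in particular in the sharp remainder bound \eqref{restosc} with its margin $\eta>0$, whose proof (only sketched in the Appendix) is the delicate point. Within the present argument, the crucial hypothesis is \eqref{ipo}, i.e. $\mathfrak m\neq0$: it is exactly what makes the $\varepsilon^4$-coefficient non-zero, so that the mass can actually be tuned, yet it bears on the solution $W$ of the non-explicit linear equation \eqref{ipo2} and has so far been verified only for $N=1$. Finally, the sign compatibility between $\rho-2\sigma_0$ and $\mathfrak m\,\Delta V(\xi_0)$ is an intrinsic feature of the mass-critical exponent, reflecting the instability of normalized solutions there.
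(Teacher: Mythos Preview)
Your proposal is correct and follows essentially the same route as the paper's proof: you set $\varepsilon^4=\Lambda\delta$ with $\delta=|\rho-2\sigma_0|$, invoke Proposition \ref{sc_exi}, expand the mass to order $\varepsilon^4$ using the refined ansatz \eqref{usc} together with Remark \ref{ipo0} and the remainder bound \eqref{restosc}, and then solve for $\Lambda$ by continuity. Your explicit mention of the role of the extra gain $\eta>0$ in \eqref{restosc}, of the intermediate value theorem, and of the sign compatibility is a faithful (and slightly more detailed) rendering of what the paper does.
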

\begin{proof}
In this case, we need a more refined profile of the solution $u_\varepsilon$, namely the first order  expansion $W_{\xi_0}$ given in \eqref{W} of the remainder term (see also Remark \eqref{ipo0}). Let us choose
\begin{equation}\label{la2sc}
\varepsilon^4=\Lambda\delta\ \hbox{where}\ \delta:=|\rho-2\sigma_0| \ \hbox{and}\ \Lambda=\Lambda(\delta)\in\left[
\frac1{2{\mathfrak m}\Delta V(\xi_0) },\frac 2{{\mathfrak m}\Delta V(\xi_0) }\right].
\end{equation}

Now, we have to choose the free parameter $\Lambda=\Lambda(\delta)$ such that   the $L^2-$norm of the solution is the  prescribed value.
Equation \eqref{cru1sc} becomes
 \begin{equation}\label{cru2sc}
 \begin{aligned}
 \varepsilon^{-{4\over p-1}} \int\limits_{\mathbb R^N}u_{ \varepsilon}^2(x)dx
 &
 =
 2 \sigma_0 -2\varepsilon^4  {\mathfrak m}\Delta V(\xi_0)
  +\mathcal O\(\varepsilon^{4+\eta}\) \\
&
= \rho\pm\delta-2\delta\Lambda(\delta){\mathfrak m}\Delta V(\xi_0)+o\(\delta\),\\
 \end{aligned}\end{equation}
 where the term $o(\cdot)$   is uniform with respect to $\Lambda=\Lambda(\delta)$ and where the last equality comes from
 \eqref{la2sc}.

In order to conclude the proof it is enough to choose $\Lambda(\delta)$
satisfying  \eqref{la2sc}, for $\delta\to0,$ such that
 $$
 \delta\(1 +{\mathfrak m}\Delta V(\xi_0) \Lambda(\delta)+o\(1\)\)=0,\quad
 \hbox{or}\quad \delta\(-1 +{\mathfrak m}\Delta V(\xi_0)\Lambda(\delta)+o\(1\)\)=0
$$
 (in particular ${\mathfrak m}\Delta V(\xi_0)<0$ in the first case and ${\mathfrak m}\Delta V(\xi_0)>0$ in the second case)
and  by \eqref{cru2sc} we deduce that $u_\varepsilon$ has the prescribed $L^2-$norm. That concludes the proof.\\
\end{proof}

In the following  remark   we prove that $ {\mathfrak m} >0$ and so \eqref{ipo}  is true when $N=1$ as proved.
We conjecture that this is true in any dimension.

\begin{remark}\label{N=1}
If $N=1$ then
$ {\mathfrak m} >0.$  In particular, assumption \eqref{ipo}  holds true and
\begin{itemize}
\item[(i)] if $\xi_0$ is a non-degenerate minimum point of $V$ then
$\mathfrak m V''(\xi_0)>0$
\item[(ii)] if $\xi_0$ is a non-degenerate maximum point of $V$ then
$\mathfrak m V''(\xi_0)<0.$
\end{itemize}
First of all,  we remark that when $N=1$, $U$ is explicitly given by
$U(x)=3^{1/4}(\cosh 2x)^{-1/2}$. Moreover,
$W_{\xi_0}=V\rq{}\rq{}(\xi_0) W,$ where $W\in H^1(\mathbb R)$  solves
\begin{equation}\label{eq:crit}
 -W\rq{}\rq{} +W-pU^{p-1}W=y^2 U(y)\ \hbox{in}\  \mathbb R.\end{equation}
We look for an even  solution to \eqref{eq:crit} of the form $W(r)=c(r)U\rq{}(r)$ and we take into account that $U'$ solves
$-(U')''+U'-pU^{p-1}U'=0$ to obtain
 that $c(r)$ has to satisfy the equation
$$
-c\rq{}\rq{}U\rq{}-2c\rq{}U\rq{}\rq{} =r^2U\ \hbox{if}\ r>0.
$$
Multiplying by $ U\rq{}$, we get
$$
-\left(c\rq{}(U\rq{}(r))^2 \right)\rq{}=\frac12r^{2} (U^2(r))\rq{}
$$
yielding
$$
c'(r)(U'(r))^{2}-c'(t)(U'(t))^{2}=\int\limits_r^t \frac12s^{2} (U^2(s))\rq{} ds>0\qquad \text{for}\ 0<r<t<\infty.
$$
 Notice that $r\to r^{2} (U^2(r))\rq{}$ is an $L^{2}(\mathbb R)-$function and so
  $r\to c\rq{}(r)(U\rq{}(r))^2$ is an $H^{1}(\mathbb R)-$function, which implies    that
$c\rq{}(t)(U\rq{}(t))^2\to 0$ as $t\to \infty$.
Then, we   get
 $$
 c'(r)=\dfrac1{2(U'(r))^{2}}\int\limits_r^{\infty} s^{2} (U^2(s))\rq{}ds \qquad \hbox{if}\ r>0.
 $$
In order to compute $\displaystyle  \lim_{r\to +\infty}c'(r)$ we notice that we are in the position to apply de L'Hopital rule and we obtain
\[
\lim_{r\to +\infty}c'(r)= \lim_{r\to +\infty}\dfrac{-r^{2}U(r)U'(r)}{2U'U''}
= \lim_{r\to +\infty}\dfrac{-r^{2}U(r)}{2U''(r)}=-\infty
\quad\text{as} \lim_{r\to +\infty}\dfrac{U(r)}{U''(r)}=1.
\]
The previous computation also yields
\[
\lim_{r\to +\infty}\frac{c'(r)}{-\frac{r^{2}}2}=1.
\]
In addition, since $U'(r)/r \to U''(0)\neq 0$ as $r\to0^+$,
\[
\lim_{r\to0^{+}}rc'(r)=\frac12\lim_{r\to0^{+}}\frac{r^2}{[U'(r)]^{2}}\int_{0}^{+\infty}s^2U(s)U'(s)ds=-\infty.
\]
This immediately implies that
\[
\lim_{r\to0^{+}}c(r)=+\infty,
\]
and (again using de L'Hopital rule)
\begin{align*}
\lim_{r\to0^{+}}W(r)
&
=\lim_{r\to0^{+}}c(r)U'(r)=\lim_{r\to0^{+}}
\frac{c(r)}{\frac1{U'(r)}}=\lim_{r\to0^{+}}\frac{\frac{1}{[U'(r)]^{2}}\int_{r}^{+\infty}s^2U(s)U'(s)ds}{-\frac{U''(r)}{[U'(r)]^{2}}}
\\ &=\lim_{r\to0^{+}}\frac{\int_{r}^{+\infty}s^2U(s)U'(s)ds}{-U''(r)}
 =\frac{\int_{0}^{+\infty}s^2U(s)U'(s)ds}{-U''(0)}=-\frac{3^{1/4}G}{4} = -0.301...,
\end{align*}
where $G$ is the Catalan constant:
\[
G = \frac12 \int_0^{+\infty} \frac{t}{\cosh t}dt = 0.916...\ .
\]
The above consideration imply that $W$ is the unique solution of the following Cauchy problem
\[
\begin{cases}
-W''+(1-pU^{p-1})W=r^{2}U
\\
W(0)=-\frac{3^{1/4}}{4} G
\\
 W'(0)=0.
\end{cases}
\]
Since $c$ is monotone, we deduce that $W$ has exactly one zero $r_0$, and it is possible to show that $0<r_0<1$. As a consequence
\[
\int_{0}^{+\infty} U(r)W(r)dr>\int_{0}^{2} U(r)W(r)dr \approx 0.253688...>0
\]
(by continuous dependence, the above integral can be numerically estimated at any level of accuracy).
\end{remark}

\begin{remark}\label{rm:morsesc}
We point out that  if $\xi_0$ is a non-degenerate critical  point of the $V$ whose Morse  index  is $m(\xi_0)$
then by Corollary 1.2 in \cite{grse} we deduce that
the solution   concentrating at a $\xi_0$ is non-degenerate and has Morse index $1+m(\xi_0)$. In particular, the solution  concentrating at a non-degenerate minimum point of $V$ is non-degenerate and has Morse index 1.
\end{remark}

\section{Appendix}
Let us briefly sketch the proof of Proposition \ref{sc_exi}. Let us introduce some notations.
Let $H^1(\mathbb R^N)$ be equipped with the usual scalar product and norm
$$\langle u,v\rangle=\int\limits_{\mathbb R^N}(\nabla u\nabla v+uv)dx\ \hbox{and}\
\|u\|=\(\int\limits_{\mathbb R^N}(|\nabla u|^2+u^2)dx\)^{1/2}.$$
We know that the embedding $H^1(\mathbb R^N)\hookrightarrow L^2(\mathbb R^N)$ is continuous. Let $i^*:L^2(\mathbb R^N)\rightarrow H^1(\mathbb R^N)$ be the adjoint defined by
$$u=i^*(f)\ \hbox{if and only if $u\in H^1(\mathbb R^N)$ solves}\ -\Delta u+u=f\ \hbox{in}\ \mathbb R^N.$$
We point out that
\begin{equation}\label{is}
\|i^*(f)\|\le \|f\|_{L^2(\mathbb R^N)}\ \hbox{for any}\ f\in L^2(\mathbb R^N).
\end{equation}
Now, let us remark that if $\xi\in\mathbb R^N$ and $v(x):=u(\varepsilon x+\xi)$ then $u$ solves equation \eqref{sch} if and only if $v$ solves the equation
 $$  - \Delta v+\( \varepsilon^2 V(\varepsilon x+\xi)+1\) v = v^p\ \hbox{in}\ \mathbb R^N,\ v>0\ \hbox{in}\ \mathbb R^N,
$$
which can be rewritten as
\begin{equation}\label{sc1}
v=i^*\(f(v)-\varepsilon^2 V_{\varepsilon,\tau} v \), \hbox{where}\ f(v):=(v^+)^p\ \hbox{and}\ V_{\varepsilon,\tau}(x) :=V(\varepsilon x+\varepsilon^2 \tau+\xi_0) ,
\end{equation}
  where we choose the point $\xi$ as
    \begin{equation}\label{sc20}
   \xi=\varepsilon ^2\tau+\xi_0\ \hbox{with}\ \tau\in\mathbb R^N.
  \end{equation}
  Let us look for a solution to \eqref{sc1} of the form
  \begin{equation}\label{sc2}
  v(x)=Z(x)+\phi(x),\ \hbox{where}\ Z(x):=U(x)-\varepsilon^4 W_{\xi_0}(x),
  \end{equation}
 $U$ is the radial solution to \eqref{pblim}
and $W_{\xi_0}\in K^\perp$ is an exponentially decaying solution to the linear problem
$$-\Delta W_{\xi_0}+W_{\xi_0}-pU^{p-1}W_{\xi_0}=H_{\xi_0},\ H_{\xi_0}(y):=\sum\limits_{i=1}^N a_i y_i^2 U(y)\ \hbox{in}\ \mathbb R^N
$$
  and $\phi$ is a remainder term which belongs to the space
 $$K^\perp:=\left\{\phi\in H^1(\mathbb R^N)\ :\ \langle \phi, \partial_i U\rangle=0,\ i=1,\dots,N\right\},$$
which is orthogonal, with respect to the $H^{1}(\R^{N})$ norm,
to the $N-$dimensional space
$$
K:=span\ \{\partial_1 U,\dots,\partial _N U\},$$
formed by the solutions to the linear equation
 $$-\Delta \psi+\psi-pU^{p-1}\psi=0\ \hbox{in}\ \mathbb R^N.$$
Problem \eqref{sc1} can be rewritten as
\begin{equation}\label{sc11}
\begin{aligned}
\underbrace{\phi-i^*\left\{\left[f'(Z)-\varepsilon^2 V_{\varepsilon,\tau}\right]\phi\right\}}_{:=\mathcal L_{\varepsilon,\tau}(\phi)}&=\underbrace{i^*\left\{f(Z+\phi)-f(Z)-f'(Z)\phi\right\}}_{:=\mathcal N_{\varepsilon,\tau}(\phi)}\\
&\underbrace{+i^*\left\{f(Z)-\varepsilon^2 V_{\varepsilon,\tau}Z\right\}-Z}_{:=\mathcal E_{\varepsilon,\tau}}.\end{aligned}
\end{equation}

Let us denote by $\Pi:H^1(\mathbb R^N)\to K$ and $\Pi^\perp:H^1(\mathbb R^N)\to K^\perp$ the orthogonal projections.
Then, problem \eqref{sc11} turns out to be equivalent to the system
\begin{equation}\label{sc31}
\Pi^\perp\left\{\mathcal L_{\varepsilon,\tau}(\phi)-\mathcal N_{\varepsilon,\tau}(\phi)-\mathcal E _{\varepsilon,\tau}\right\}=0
\end{equation}
and
\begin{equation}\label{sc32}
\Pi\left\{\mathcal L_{\varepsilon,\tau}(\phi)-\mathcal N_{\varepsilon,\tau}(\phi)-\mathcal E _{\varepsilon,\tau}\right\}=0.
\end{equation}

  First, for $\varepsilon$ small and for any $\xi\in \mathbb R^N$ we will find a solution $\phi=\phi_{\varepsilon,\tau}\in K^\perp$ to \eqref{sc31}.
We recall that we are assuming, for the sake of simplicity, that $V$ and
$\ |\nabla V|$ are $ L^\infty(\mathbb R^N)$ function.
  \begin{proposition}
  \label{phi}
   For any compact set $T\subset\mathbb R^N$ there exists $\varepsilon_0>0$ and $C>0$ such that  for any $\varepsilon\in (0,\varepsilon_0)$ and for any $\tau\in T$ there exists a unique $\phi=\phi_{\varepsilon,\tau}\in K^\perp$ which solves equation \eqref{sc31} and
  $$\|\phi_{\varepsilon,\tau} \|\le C\varepsilon^5.$$
  \end{proposition}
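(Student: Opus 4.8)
The plan is to solve the auxiliary equation \eqref{sc31} by a contraction mapping argument on the ball $\{\|\phi\|\le C\varepsilon^5\}\subset K^\perp$, following the standard Lyapunov--Schmidt scheme (e.g. as in Li \cite{yyl2}, Grossi \cite{g}, Grossi--Pistoia \cite{gp}). The three ingredients I would prepare first are: (a) invertibility of the linear operator $\Pi^\perp\mathcal L_{\varepsilon,\tau}$ on $K^\perp$, uniformly for $\tau\in T$ and $\varepsilon$ small; (b) a Lipschitz estimate for the nonlinear remainder $\mathcal N_{\varepsilon,\tau}$; (c) a size estimate for the error term $\mathcal E_{\varepsilon,\tau}$. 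Once these are in place, \eqref{sc31} is equivalent to the fixed point problem $\phi = (\Pi^\perp\mathcal L_{\varepsilon,\tau})^{-1}\Pi^\perp\big(\mathcal N_{\varepsilon,\tau}(\phi) + \mathcal E_{\varepsilon,\tau}\big)$, and a routine application of the Banach fixed point theorem gives the unique $\phi_{\varepsilon,\tau}$ with $\|\phi_{\varepsilon,\tau}\|\le C\varepsilon^5$.

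For (a): the operator $\phi\mapsto \phi - i^*\{f'(U)\phi\}$ restricted to $K^\perp$ is known to be invertible, because its kernel is exactly $K = \mathrm{span}\{\partial_1 U,\dots,\partial_N U\}$ (non-degeneracy of $U$) and $i^*$ is compact enough that Fredholm theory applies; this is classical. The perturbation $\phi\mapsto i^*\{[f'(Z)-f'(U) - \varepsilon^2 V_{\varepsilon,\tau}]\phi\}$ has small operator norm: using \eqref{is}, $Z = U - \varepsilon^4 W_{\xi_0}$ with $W_{\xi_0}$ fixed and exponentially decaying, and $\|V\|_\infty<\infty$, one gets that this perturbation is $O(\varepsilon^2)$ in operator norm, uniformly in $\tau\in T$. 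Hence $\Pi^\perp\mathcal L_{\varepsilon,\tau}$ is invertible on $K^\perp$ with inverse bounded uniformly in $\varepsilon,\tau$, for $\varepsilon$ small.

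For (b) and (c): since $f(v) = (v^+)^p$, a standard pointwise inequality gives $\|\mathcal N_{\varepsilon,\tau}(\phi_1)-\mathcal N_{\varepsilon,\tau}(\phi_2)\| \le C(\|\phi_1\|+\|\phi_2\|)^{\min\{1,p-1\}}\|\phi_1-\phi_2\|$ via \eqref{is} and Sobolev embeddings, so on the ball of radius $C\varepsilon^5$ the map $\mathcal N_{\varepsilon,\tau}$ is a contraction with small constant. The crucial point is the size of $\mathcal E_{\varepsilon,\tau} = i^*\{f(Z)-\varepsilon^2 V_{\varepsilon,\tau}Z\} - Z$. Expanding $V_{\varepsilon,\tau}(x) = V(\varepsilon x + \varepsilon^2\tau+\xi_0)$ by Taylor around $\xi_0$ using \eqref{taylor} (here $\nabla V(\xi_0)=0$), the quadratic term $\varepsilon^2\sum_i a_i(\varepsilon x)^2 U = \varepsilon^4 H_{\xi_0}(x)$ is cancelled exactly by the term $-\varepsilon^4 W_{\xi_0}$ in $Z$, since $W_{\xi_0}$ solves $-\Delta W_{\xi_0} + W_{\xi_0} - pU^{p-1}W_{\xi_0} = H_{\xi_0}$; this is the reason for the refined ansatz \eqref{sc2}. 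What remains is: the cubic-and-higher remainder from the Taylor expansion of $V$, which contributes $O(\varepsilon^{5})$ (the factor $\varepsilon^2$ times $\varepsilon^3|x|^3 U$, together with the shift $\varepsilon^2\tau$); the quadratic-in-$\varepsilon^4 W_{\xi_0}$ terms from $f(Z)-f(U)+pU^{p-1}\varepsilon^4 W_{\xi_0}$, which are $O(\varepsilon^8)$; and the product $\varepsilon^2 V_{\varepsilon,\tau}\cdot(-\varepsilon^4 W_{\xi_0})$, which is $O(\varepsilon^6)$. Collecting everything, $\|\mathcal E_{\varepsilon,\tau}\| = O(\varepsilon^5)$, uniformly for $\tau$ in the compact set $T$.

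The main obstacle is precisely the bookkeeping in step (c): one must verify that the $\varepsilon^4$-order contribution to the error genuinely cancels (requiring the exponential decay of $W_{\xi_0}$, which makes all the integrals involving $|x|^k W_{\xi_0}$ finite and all weighted $L^2$ norms controlled) and that no hidden term of order $\varepsilon^4$ survives — in particular the linear-in-$x$ term of the Taylor expansion of $V$ vanishes only because $\xi_0$ is a critical point. Everything else — the Fredholm alternative for the model linear operator, the contraction estimates for $\mathcal N_{\varepsilon,\tau}$, and the final fixed point argument — is routine and can be imported essentially verbatim from \cite{yyl2,g,gp}.
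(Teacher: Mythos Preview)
Your proposal is correct and follows essentially the same three-step scheme as the paper's proof: (i) uniform invertibility of $\Pi^\perp\mathcal L_{\varepsilon,\tau}$ on $K^\perp$ as a small perturbation of the model operator (the paper simply refers to \cite{g,gp}); (ii) the error estimate $\|\mathcal E_{\varepsilon,\tau}\|=O(\varepsilon^5)$ via the Taylor expansion \eqref{taylor} together with the exact cancellation of the $\varepsilon^4$-term enforced by the choice of $W_{\xi_0}$; (iii) a contraction mapping argument using the superlinearity of $\mathcal N_{\varepsilon,\tau}$ from \eqref{li}. One minor caveat: your claim that the terms $f(Z)-f(U)+pU^{p-1}\varepsilon^4 W_{\xi_0}$ are $O(\varepsilon^8)$ holds only when $p\ge2$; for $1<p<2$ the bound from \eqref{li} gives $O(\varepsilon^{4p})$, which is still $o(\varepsilon^4)$ and hence sufficient for the argument (and consistent with \eqref{restosc}).
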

  \begin{proof}
  Let us sketch the main steps of the proof.
 \begin{itemize}
 \item[(i)] First of all , we prove that the linear operator $\mathcal L_{\varepsilon,\tau}$ is uniformly invertible in $K^\perp,$ namely
  there exists $\varepsilon_0>0$ and $C>0$ such that
$$
\|\mathcal L_{\varepsilon,\tau}(\phi)\|
\geq 
C\|\phi\|\ \hbox{for any}\ \varepsilon\in (0,\varepsilon_0),\ \tau\in T\ \hbox{and}\ \phi\in K^\perp.
$$
We can argue as in  \cite{g,gp}.
\item[(ii)] Next, we compute the size of the error $\mathcal E_{\varepsilon,\tau}$ in terms of $\varepsilon.$ More precisely, we show that
 there exists $\varepsilon_0>0$ and $C>0$ such that
  $$\|\mathcal E_{\varepsilon,\tau} \|\le C\varepsilon^5\ \hbox{for any}\ \varepsilon\in (0,\varepsilon_0)\ \hbox{and}\ \tau\in T.$$
Indeed, we recall that
  $$Z=U-\varepsilon^4W_{\xi_0}=i^*\left\{f(U)-\varepsilon^4 \left[H_{\xi_0}+f'(U)W_{\xi_0}\right]\right\}.$$
  Moreover by \eqref{taylor} we deduce
  $$V_{\varepsilon,\tau}(x)=V(\varepsilon x+\varepsilon ^2\tau+\xi_0)=\varepsilon^2\sum\limits_{i=1}^N a_i x_i^2+\mathcal O(\varepsilon^3 \(1+|x|^3\)).$$
 Therefore we have
  $$\begin{aligned}&i^*\left\{f(Z)-\varepsilon^2 V_{\varepsilon,\tau}Z\right\}-Z\\
  &=
  i^*\left\{f(U-\varepsilon^4 W_{\xi_0})-\varepsilon^2 \left[\varepsilon^2 \sum\limits_{i=1}^N a_i x_i^2+\mathcal O\(\varepsilon^3\(1+ |x|^3\)\)\right]\left[U-\varepsilon^4 W_{\xi_0}\right]\right.\\ &\qquad \left.
  -f(U)+\varepsilon^4 \left[H_{\xi_0}+f'(U)W_{\xi_0}\right]\right\}\\
  &= i^*\left\{f(U-\varepsilon^4W_{\xi_0})-f(U)+ \varepsilon^4  f'(U)W_{\xi_0}\right\}\\
  &
 +i^*\left\{\mathcal O\(\varepsilon ^5\(1+ |x|^3\) U+\varepsilon^8|W_{\xi_0}|+ \varepsilon^9\(1+ |x|^3\) |W_{\xi_0}|\)\right\}\\
  \end{aligned}
  $$
  and by \eqref{is} and \eqref{li} we immediately get the claim.\\
  We recall the useful estimate
  \begin{equation}\label{li}
  |f(a+b)-f(a)-f'(a)b|=\left\{\begin{aligned}&\mathcal O(|b|^p)\ \hbox{if}\ 1<p\le2,\\
  &\mathcal O(|b|^p+|a|^{p-2}|b|^2)\ \hbox{if}\ p\ge2.\end{aligned}\right.
  \end{equation}
\item[(iii)] Finally, we use   a standard contraction mapping argument, combined to the fact that the term $\mathcal N_{\varepsilon,\tau}(\phi)$ is super-linear in $\phi$ in virtue of \eqref{li}.
 \end{itemize}
  \end{proof}

    Now, for $\varepsilon$ small enough we fill find a point $\tau_\varepsilon\in\mathbb R^N$ so that
  \eqref{sc32} is also satisfied. That will conclude the proof.
\begin{proposition}\label{punti}
 There exists $\varepsilon_0>0$ such that for any  $\varepsilon\in (0,\varepsilon_0)$ there exists $\tau_\varepsilon\in\mathbb R^N$ such that
 equation \eqref{sc32} is satisfied.
  \end{proposition}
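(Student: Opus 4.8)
The plan is to read \eqref{sc32} as a system of $N$ scalar equations in the $N$ unknowns $\tau=(\tau_1,\dots,\tau_N)$ and to solve it by a Brouwer degree argument on a fixed large ball. Fix $T:=\overline{B_R(0)}\subset\R^N$; for $\varepsilon$ small and $\tau\in T$ let $\phi_{\varepsilon,\tau}\in K^\perp$ be the function given by Proposition \ref{phi} (it is the fixed point of a contraction whose data depend continuously on $\tau$, hence $\tau\mapsto\phi_{\varepsilon,\tau}$ is continuous, uniformly for $\varepsilon$ small). Since $K=\mathrm{span}\{\partial_1U,\dots,\partial_NU\}$ and $\{\partial_jU\}_j$ is an orthogonal system in $H^1(\R^N)$, equation \eqref{sc32} is equivalent to the vanishing of the continuous map $T\ni\tau\mapsto\big(\mathcal M_j(\varepsilon,\tau)\big)_{j=1}^N$, where
\[
\mathcal M_j(\varepsilon,\tau):=\big\langle \mathcal L_{\varepsilon,\tau}(\phi_{\varepsilon,\tau})-\mathcal N_{\varepsilon,\tau}(\phi_{\varepsilon,\tau})-\mathcal E_{\varepsilon,\tau},\ \partial_jU\big\rangle .
\]
So it is enough to expand $\mathcal M_j$ in $\varepsilon$, uniformly on $T$, and to recognize that the leading term is a non-singular affine function of $\tau$. (This is the bifurcation equation of the Lyapunov--Schmidt scheme, carried out here directly; cf.\ \cite{yyl2,g,gp}.)

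First I would show that only $\langle\mathcal E_{\varepsilon,\tau},\partial_jU\rangle$ contributes at the relevant order. Since $\partial_jU$ solves the linearized equation, $\int_{\R^N}pU^{p-1}\partial_jU\,\phi_{\varepsilon,\tau}=\langle\partial_jU,\phi_{\varepsilon,\tau}\rangle=0$; combining this with $f'(Z)-pU^{p-1}=O(\varepsilon^{4\min\{1,p-1\}})$ (in weighted norm), with $V_{\varepsilon,\tau}=O(\varepsilon^2)$, and with $\|\phi_{\varepsilon,\tau}\|=O(\varepsilon^5)$, one gets $\langle\mathcal L_{\varepsilon,\tau}(\phi_{\varepsilon,\tau}),\partial_jU\rangle=o(\varepsilon^5)$; also $\langle\mathcal N_{\varepsilon,\tau}(\phi_{\varepsilon,\tau}),\partial_jU\rangle=O(\|\phi_{\varepsilon,\tau}\|^{\min\{p,2\}})=o(\varepsilon^5)$ by \eqref{li}. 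For the error, using \eqref{sc2}, $\langle i^*(g),w\rangle=\int_{\R^N}gw$, and $\langle Z,\partial_jU\rangle=\langle U,\partial_jU\rangle-\varepsilon^4\langle W_{\xi_0},\partial_jU\rangle=0$ ($U$ radial, $W_{\xi_0}\in K^\perp$),
\[
\langle\mathcal E_{\varepsilon,\tau},\partial_jU\rangle=\int_{\R^N}f(Z)\,\partial_jU-\varepsilon^2\int_{\R^N}V_{\varepsilon,\tau}\,Z\,\partial_jU .
\]
The crucial point is a parity argument: $U$ is radial and, by \eqref{W}, $W_{\xi_0}$ is even in each variable, so $f(Z)=(Z^+)^p$ is even in each variable and $f(Z)\,\partial_jU$ is odd in $x_j$; hence $\int_{\R^N}f(Z)\,\partial_jU=0$ \emph{identically}, which eliminates a term that is a priori only $O(\varepsilon^{4p})$, hence possibly larger than $\varepsilon^5$ when $p$ is close to $1$. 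The same parity argument gives $\varepsilon^2\int_{\R^N}V_{\varepsilon,\tau}W_{\xi_0}\partial_jU=O(\varepsilon^8)$, and an integration by parts $\big(V_{\varepsilon,\tau}U\,\partial_jU=\tfrac12V_{\varepsilon,\tau}\,\partial_{y_j}U^2\big)$ yields
\[
\langle\mathcal E_{\varepsilon,\tau},\partial_jU\rangle=\frac{\varepsilon^3}{2}\int_{\R^N}\partial_jV\big(\xi_0+\varepsilon y+\varepsilon^2\tau\big)\,U^2(y)\,dy+o(\varepsilon^5).
\]

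Next I would Taylor-expand $V$ at the non-degenerate critical point $\xi_0$. Because $\nabla V(\xi_0)=0$, using \eqref{taylor} and the radial symmetry of $U$ (which kills the odd moments of $U^2$) one obtains, uniformly for $\tau\in T$,
\[
\int_{\R^N}\partial_jV\big(\xi_0+\varepsilon y+\varepsilon^2\tau\big)\,U^2(y)\,dy=\varepsilon^2\Big[2\sigma_0\big(D^2V(\xi_0)\tau\big)_j+\beta_j\Big]+o(\varepsilon^2),
\]
where $2\sigma_0=\int_{\R^N}U^2$ (see \eqref{so}) and $\beta=(\beta_1,\dots,\beta_N)\in\R^N$ is bounded and depends only on $V$ and $\xi_0$ (if $V\in C^3$ near $\xi_0$ then $\beta=\tfrac1{2N}\big(\int_{\R^N}|y|^2U^2\big)\,\nabla(\Delta V)(\xi_0)$). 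Hence $\varepsilon^{-5}\big(\mathcal M_j(\varepsilon,\cdot)\big)_j$ converges uniformly on $T$ to the affine map $\Psi(\tau):=-\sigma_0 D^2V(\xi_0)\tau-\tfrac12\beta$. Since $\xi_0$ is non-degenerate, $D^2V(\xi_0)$ is invertible, so $\Psi$ is a homeomorphism of $\R^N$ with the unique zero $\tau_0=-\tfrac1{2\sigma_0}(D^2V(\xi_0))^{-1}\beta$; taking $R$ so large that $\tau_0\in B_R(0)$ we have $\Psi\ne0$ on $\partial B_R(0)$ and $\deg(\Psi,B_R(0),0)=(-1)^N\mathrm{sgn}\det D^2V(\xi_0)\ne0$. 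By the uniform convergence, for $\varepsilon$ small the straight-line homotopy between $\varepsilon^{-5}(\mathcal M_j(\varepsilon,\cdot))_j$ and $\Psi$ is admissible on $\partial B_R(0)$, so $(\mathcal M_j(\varepsilon,\cdot))_j$ has a zero $\tau_\varepsilon\in B_R(0)$: this is the point required by Proposition \ref{punti}. Undoing \eqref{sc20} and \eqref{sc2}, $u_\varepsilon(x):=(Z+\phi_{\varepsilon,\tau_\varepsilon})\!\big(\tfrac{x-\xi_\varepsilon}{\varepsilon}\big)$ with $\xi_\varepsilon:=\varepsilon^2\tau_\varepsilon+\xi_0\to\xi_0$ then solves \eqref{sch}, has the form \eqref{usc}, and satisfies \eqref{pointsc} and \eqref{restosc}, which also completes the proof of Proposition \ref{sc_exi}.

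The hard part is the uniform-in-$\tau$ asymptotics of $\langle\mathcal E_{\varepsilon,\tau},\partial_jU\rangle$ up to order $\varepsilon^5$: several pieces are a priori of order $\le\varepsilon^5$ — or larger, as for $\int_{\R^N}f(Z)\partial_jU$ — and one must use the radial symmetry of $U$ and the evenness of $W_{\xi_0}$ to see that they vanish or are genuinely $o(\varepsilon^5)$, and then isolate the first $\tau$-dependent term $\sigma_0 D^2V(\xi_0)\tau$, which is exactly where the non-degeneracy of $\xi_0$ enters. Everything else — invertibility of $\mathcal L_{\varepsilon,\tau}$, the bound $\|\mathcal E_{\varepsilon,\tau}\|=O(\varepsilon^5)$, and the contraction estimate for $\mathcal N_{\varepsilon,\tau}$ — is already available from Proposition \ref{phi}.
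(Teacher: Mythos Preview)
Your proof is correct and follows essentially the same route as the paper: both pair the bifurcation equation with $\partial_jU$, use the evenness of $W_{\xi_0}$ (proved in Remark~\ref{w_even}) to kill $\int f(Z)\,\partial_jU$, integrate by parts, and Taylor-expand $\partial_jV$ at $\xi_0$ to isolate the leading $\varepsilon^5$-term, which is affine in $\tau$ with linear part governed by $D^2V(\xi_0)$. The only cosmetic difference is the final step: the paper works in the coordinates of~\eqref{taylor} where the Hessian is diagonal and solves the decoupled system $Ba_j\tau_j+C_j+o(1)=0$ directly, whereas you keep the full Hessian and appeal to Brouwer degree---both arguments are equivalent once non-degeneracy of $\xi_0$ is used.
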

\begin{proof}
Since \eqref{sc31} holds we deduce that there exist real numbers $c^i_{\varepsilon,\tau}$ such that
\begin{equation}\label{pu1}\mathcal L_{\varepsilon,\tau}(\phi_{\varepsilon,\tau})-\mathcal N_{\varepsilon,\tau}(\phi_{\varepsilon,\tau})-\mathcal E _{\varepsilon,\tau}=\sum\limits_{i=1}^N c^i_{\varepsilon,\tau}\partial_i U.\end{equation}
We are going to find points $\tau=\tau_\varepsilon$ such that the $c^i_{\varepsilon,\tau}$'s are zero. \\
Let us multiply \eqref{pu1} by $\partial_j U=i^*\(f'(U)\partial_j U\)$.
We get
\begin{equation}\label{pu2}\left\langle\mathcal L_{\varepsilon,\tau}(\phi_{\varepsilon,\tau})-\mathcal N_{\varepsilon,\tau}(\phi_{\varepsilon,\tau})-\mathcal E _{\varepsilon,\tau},\partial _jU\right\rangle=A c^j_{\varepsilon,\tau},\end{equation}
because
$$\langle\partial_i U,\partial_j U\rangle=\int\limits_{\mathbb R^N} f'(U)\partial_i U\partial_j U= A \delta_{ij },\ \hbox{where}\
A:=\int\limits_{\mathbb R^N} f'(U)\(\partial_1 U\)^2.$$
Moreover, by \eqref{li} we have
$$\langle \mathcal L_{\varepsilon,\tau}(\phi_{\varepsilon,\tau}),\partial_j U\rangle=\int\limits_{\mathbb R^N} \left[f'(U)-f'(U-\varepsilon^4 W_{\xi_0})+\varepsilon^2 V_{\varepsilon,\tau}\right]\phi\partial_j U=\mathcal O\(\varepsilon^7\)$$
and
$$\langle \mathcal N_{\varepsilon,\tau}(\phi_{\varepsilon,\tau}),\partial_j U\rangle= \mathcal O\(\varepsilon^8\).$$
It remains to compute
$$\begin{aligned} -\langle \mathcal E _{\varepsilon,\tau},\partial_j U\rangle&=-\langle i^*\left[f(Z)-\varepsilon^2 V_{\varepsilon,\tau}Z\right]-Z,\partial_j U\rangle\\
&=-\int\limits_{\mathbb R^N}\left[f(Z)-\varepsilon^2 V_{\varepsilon,\tau}Z\right] \partial_j U+
\int\limits_{\mathbb R^N} Zf'(U)   \partial_ jU\\
&=\varepsilon^2\int\limits_{\mathbb R^N} V_{\varepsilon,\tau}Z\partial_j U\ \hbox{\small (indeed $Z=U-\varepsilon^4W_{\xi_0}$ is even, Rem. \eqref{w_even}, and $\partial_j U$ is odd)}\\
&=\varepsilon^2\int\limits_{\mathbb R^N} V(\varepsilon x+\varepsilon^2\tau+\xi_0)(U-\varepsilon^4W_{\xi_0})\partial_j U\\
&=\varepsilon^2\int\limits_{\mathbb R^N} V(\varepsilon x+\varepsilon^2\tau+\xi_0)U\partial_j U+\mathcal O(\varepsilon^6)\\
&=-\frac 12\varepsilon^3\int\limits_{\mathbb R^N} {\partial V\over \partial y_j}(\varepsilon x+\varepsilon^2\tau+\xi_0)U^2(x)dx+\mathcal O(\varepsilon^6)\\
&=-\frac 12\varepsilon^5\left[  a_j  \tau_j\int\limits_{\mathbb R^N} U^2(x)dx+\frac1{2N}\sum\limits_{\ell,\kappa=1}^N{\partial^3 V\over \partial y_\kappa\partial y_\ell\partial y_j} (\xi_0)
 \int\limits_{\mathbb R^N}|x|^2 U^2(x)dx  \right]\\ &+\mathcal O(\varepsilon^6),\\
\end{aligned}$$
because by \eqref{taylor} and by the mean value theorem
$$\begin{aligned}({\partial_j V})(\varepsilon x+\varepsilon^2\tau+\xi_0)&= a_j
 \(\varepsilon x_j+\varepsilon^2\tau_j\)+\frac12\sum\limits_{\ell,\kappa=1}^N{\partial^3 V\over \partial y_\kappa\partial y_\ell\partial y_j} (\xi_0)
  \(\varepsilon^2x_\ell x_\kappa\)\\ &+\mathcal O\(\varepsilon^3\(1+|x|^3\)\).\end{aligned}$$
  Therefore,   \eqref{pu2} reads as the system
  $$-\frac 12\varepsilon^5\left[ B a_j \tau_j+C\sum\limits_{\ell,\kappa=1}^N{\partial^3 V\over \partial y_\kappa\partial y_\ell\partial y_j} (\xi_0)+o(1)\right]=A c^{j}_{\varepsilon,\tau}\ \hbox{for any}\ j=1,\dots,N,$$
  for some positive constants $A,$ $B$ and $C$.
  Finally, since all the $a_j$'s are different from zero, if $\varepsilon$ is small enough there exists $\tau=\tau_\varepsilon$ such that the R.H.S is zero and so all the $c^j_{\varepsilon,\tau_\varepsilon}$'s are zero.
\end{proof}
  \begin{remark}\label{w_even}
Let us point out that $W_{\xi_0}$ is even in each $y_i$'s.
By \eqref{w1} it is enough to prove that $W_1\in K^\perp$ which solves
 $$-\Delta W_1+W_1-pU^{p-1}W_1=  y_1^2 U(y)\ \hbox{in}\ \mathbb R^N$$
 is even in $y_1,$ i.e.
$W(y_1,y')=W(-y_1,y')$ where $y'=(y_2,\dots,y_N).$
It is immediate to check that the function
$$w(y)=W(y_1,y')-W(-y_1,y')=\sum\limits_{i=1}^N \omega_i \partial_i U=\frac{U'(|y|)}{|y|}\sum\limits_{i=1}^N \omega_i y_i$$
where $\rho=|y|$, for some $\omega_i\in\mathbb R$, since it solves the linear equation
 $$-\Delta w+w-pU^{p-1}w=  0.$$
It is clear that $\omega_2=\dots=\omega_N=0$ and so $w(y)=\frac{U'(|y|)}{|y|} \omega_1 y_1.$
Now, by the orthogonality condition we deduce
$$\begin{aligned}0&=\langle W_1,\partial_1 U\rangle=\int\limits_{\mathbb R^N}pU^{p-1}\partial_1 U W_1\\ &=
\int\limits_{\{y_1\ge0\}}pU^{p-1}(|y|)\frac{U'(|y|)}{|y|} y_1 W_1(y_1,y')dy+\int\limits_{\{y_1\le0\}}pU^{p-1}(|y|)\frac{U'(|y|)}{|y|} y_1 W_1(y_1,y')dy=\\
&=
\int\limits_{\{y_1\ge0\}}pU^{p-1}(|y|)\frac{U'(|y|)}{|y|} y_1 \underbrace{\left[W_1(y_1,y')-W_1(-y_1,y')\right]}_{=w(y)}dy=\\
&=\omega_1\int\limits_{\{y_1\ge0\}}pU^{p-1}(|y|)\(\frac{U'(|y|)}{|y|} y_1\)^2dy,
\end{aligned}$$
which implies $\omega_1=0$. That concludes the proof.
\end{remark}

\bibliography{normalized}
\bibliographystyle{abbrv}

\end{document}